\newcommand{\mut}{\mu}
\newcommand{\comp}{\kappa}
\newcommand{\twist}{twist}
\newcommand{\twisting}{twisting}
\newcommand{\twists}{twists}
\newcommand{\zb}{\mathbb{Z}}
\newcommand{\al}{\alpha}
\newcommand{\eps}{\varepsilon}
\newcommand{\fl}{\rightarrow}
\newcommand{\gfl}{\longrightarrow}
\newcommand{\dr}{\ar@{->}[r]}
\newcommand{\dri}{\ar@{>->}[r]}
\newcommand{\drp}{\ar@{-->}[r]}
\newcommand{\dre}{\ar@{->>}[r]}
\newcommand{\dreg}{\ar@{=}[r]}
\newcommand{\drm}{\ar@{^{(}->}[r]}
\newcommand{\ddr}{\ar@{->}[rr]}
\newcommand{\ddre}{\ar@{->>}[rr]}
\newcommand{\ddreg}{\ar@{=}[rr]}
\newcommand{\ha}{\ar@{->}[u]}
\newcommand{\hae}{\ar@{->>}[u]}
\newcommand{\hap}{\ar@{-->}[u]}
\newcommand{\ham}{\ar@{^{(}->}[u]}
\newcommand{\hham}{\ar@{^{(}->}[uu]}
\newcommand{\hag}{\ar@{->}[ul]}
\newcommand{\hagm}{\ar@{^{(}->}[ul]}
\newcommand{\hagp}{\ar@{-->}[ul]}
\newcommand{\hdr}{\ar@{->}[ur]}
\newcommand{\hdrm}{\ar@{^{(}->}[ur]}
\newcommand{\hdri}{\ar@{>->}[ur]}
\newcommand{\hdre}{\ar@{->>}[ur]}
\newcommand{\hdrp}{\ar@{-->}[ur]}
\newcommand{\bas}{\ar@{->}[d]}
\newcommand{\bbas}{\ar@{->}[dd]}
\newcommand{\basm}{\ar@{^{(}->}[d]}
\newcommand{\basi}{\ar@{>->}[d]}
\newcommand{\base}{\ar@{->>}[d]}
\newcommand{\basp}{\ar@{-->}[d]}
\newcommand{\baseg}{\ar@{=}[d]}
\newcommand{\bbaseg}{\ar@{=}[dd]}
\newcommand{\bdr}{\ar@{->}[dr]}
\newcommand{\bdre}{\ar@{->>}[dr]}
\newcommand{\bbdr}{\ar@{->}[ddr]}
\newcommand{\bddr}{\ar@{->}[drr]}
\newcommand{\bg}{\ar@{->}[dl]}
\newcommand{\bgm}{\ar@{^{(}->}[dl]}
\newcommand{\bgp}{\ar@{-->}[dl]}
\newcommand{\bggp}{\ar@{-->}[dll]}
\newcommand{\bbgp}{\ar@{-->}[ddl]}
\newcommand{\cat}{\mathcal{C}}
\newcommand{\dc}{\mathcal{D}}
\newcommand{\rc}{\mathcal{R}}
\newcommand{\cs}{\underline{\mathcal{C}}}
\newcommand{\shift}{\Sigma}
\newcommand{\susm}{\Sigma^{-1}}
\newcommand{\ext}{\operatorname{Ext}}
\newcommand{\homph}{\operatorname{Hom}}
\newcommand{\Hom}{\operatorname{Hom}}
\newtheorem{theo}{Theorem}
\newtheorem{prop}[theo]{Proposition}
\newtheorem{lemma}[theo]{Lemma}
\newtheorem{cor}[theo]{Corollary}
\newtheorem{rk}[theo]{Remark}
\newcommand{\End}{\operatorname{End}}
\newcommand{\ens}[1]{\left\{ #1 \right\}}
\newcommand{\had}{\ar@[]}
\newcommand{\per}{\operatorname{per}}
\newcommand{\add}{\operatorname{add}}
\newcommand{\arcs}{\mathscr{R}}
\newcommand{\tri}{\mathscr{T}}
\title[Coloured quivers for rigid objects and partial triangulations]{Coloured quivers for rigid objects and partial triangulations: The unpunctured case}
\author{Robert J. Marsh}
\address{School of Mathematics, University of Leeds, Leeds, LS2 9JT, United Kingdom}
\email{marsh@maths.leeds.ac.uk}
\author{Yann Palu}
\address{Laboratoire Ami\'{e}nois
de Math\'{e}matique Fondamentale et Appliqu\'{e}e,
UFR des Sciences,
33, rue Saint-Leu,
80039 Amiens Cedex 1, France}
\email{yann.palu@u-picardie.fr}
\subjclass[2010]{Primary: 16G20, 16E35, 18E30; Secondary: 05C62, 13F60, 30F99.}
\date{12 March 2013}
\keywords{Cluster category; quiver mutation; triangulated category; rigid object; coloured quiver; Riemann surface; partial triangulation; Iyama-Yoshino reduction.}
\begin{document}

\begin{abstract}
We associate a coloured quiver to a rigid object in a
Hom-finite 2-Calabi--Yau triangulated category and to a partial triangulation
on a marked (unpunctured) Riemann surface. We show that, in
the case where the category is the generalised cluster category
associated to a surface, the coloured quivers coincide.
We also show that compatible notions of mutation can be
defined and give an explicit description in the case of a
disk. A partial description is given in the general 2-Calabi--Yau case.
We show further that Iyama-Yoshino reduction can be
interpreted as cutting along an arc in the surface.
\end{abstract}

\thanks{This work was supported by the Engineering and Physical Sciences Research Council [grant number EP/G007497/1].}

\maketitle

%%%%%%%%%%%%%%%%%%%%%%%%%%%%%%%%%%%%%%%%%%%%%%%%%%%%%%%%%%%%%%%%%%%%%%%%%%%%%%%%%%%%%%%
\section*{Introduction}
%%%%%%%%%%%%%%%%%%%%%%%%%%%%%%%%%%%%%%%%%%%%%%%%%%%%%%%%%%%%%%%%%%%%%%%%%%%%%%%%%%%%%%%

Let $(S,M)$ be a pair consisting of an oriented Riemann surface $S$ with non-empty boundary and a set $M$ of marked points on the boundary of $S$, with at least one marked point on each component of the boundary. We further
assume that $(S,M)$ has no component homeomorphic to a monogon, digon, or triangle.
A \emph{partial triangulation} $\arcs$ of $(S,M)$ is a set of noncrossing simple arcs between the points in $M$. We define a mutation of such triangulations, involving replacing an arc $\al$ of $\arcs$ with a new arc depending on the surface and the rest of the partial triangulation. This allows us to associate a coloured quiver to each partial triangulation of $M$ in a natural way. The coloured quiver is a directed graph in which each edge has an associated colour which, in general, can be any integer.

Let $\cat$ be a Hom-finite, 2-Calabi--Yau, Krull-Schmidt triangulated category over a field $k$.
A \emph{rigid} object in $\cat$ is an object $R$ with no self-extensions, i.e.\
satisfying $\ext^1_{\cat}(R,R)=0$. Rigid objects in $\cat$ can also be mutated. In this case the mutation involves replacing an indecomposable summand $X$ of $R$ with a new summand depending on the relationship between $X$ and the rest of the summands of $R$. As above, this allows us to associate a coloured quiver to each rigid object of $\cat$ in a natural way.

In~\cite{BZ} the authors study the generalised cluster category $\cat_{(S,M)}$ in
the sense of Amiot~\cite{Acqw} associated to a surface
$(S,M)$ as above. Such a category is triangulated and satisfies the above
requirements. It is shown in~\cite{BZ} that, given a choice of (complete)
triangulation of $(S,M)$, there is a bijection between the simple arcs in $(S,M)$ (joining two points in $M$), up to homotopy, and the isomorphism classes of rigid indecomposable objects in $\cat_{(S,M)}$. If $X_{\al}$ denotes the object corresponding to an arc $\al$ then $\ext^1_{\cat_{(S,M)}}(X_{\al},X_{\beta})=0$
if and only if $\al$ and $\beta$ do not cross. It follows that there is a
bijection between partial triangulations of $(S,M)$ and rigid objects in
$\cat_{(S,M)}$. Our main result is that the coloured quivers defined above coincide
in this situation and that the two notions of mutation are compatible.

Suppose that $\al$ is a simple arc in $(S,M)$ as above. Let $X_{\al}$ be the indecomposable rigid object corresponding to $\al$.
Iyama-Yoshino~\cite{IY} have associated (in a more general context) a subquotient category $(\cat_{(S,M)})_{X_{\al}}$ to $X_{\al}$ which we refer to as the Iyama-Yoshino reduction of $\cat_{(S,M)}$ at $X_{\al}$. The Iyama-Yoshino reduction is again triangulated. We show that $(\cat_{(S,M)})_{X_{\al}}$ is equivalent to $\cat_{(S,M)/\al}$ where $(S,M)/\al$ denotes the new marked surface obtained from $(S,M)$ by cutting along $\al$.

By studying the combinatorics, we are able to give an explicit description of the effect of mutation on coloured quivers associated to a disk with $n$ marked points. The corresponding cluster category in this case was introduced independently in~\cite{CCS} (in geometric terms) and in~\cite{BMRRT} as the cluster category associated to
a Dynkin quiver of type $A_{n-3}$. We also give a partial explicit description of coloured quiver mutation in the general (2-Calabi--Yau) case, together with a categorical proof. In general, there are quite interesting phenomena: we give an example to show that infinitely many colours can occur in one quiver, and also show that zero-coloured $2$-cycles can occur (in contrast to the situation in~\cite{BuanThomas}).

We remark that in the case of a cluster tilting object $T$ in an acyclic cluster
category the categorical mutation we define coincides with that considered
in~\cite{BMRRT}; also with that in the $2$-Calabi-Yau case considered in~\cite{BIRS,Pgmr}.
It also coincides in the maximal rigid case considered in~\cite{GLSrigid,BIRS,IY}.
In this case, the coloured quiver we consider here encodes the same information
as the matrix associated to $T$ in~\cite{BMV} provided there are no zero-coloured two-cycles. With this restriction, the mutation of this matrix coincides~\cite[1.1]{BMV} with the mutation~\cite{FZ1}
arising in the theory of cluster algebras. We note also that this fact for the cluster tilting case was shown in~\cite{BIRS} under the assumption that there are no two-cycles or loops ($1$-cycles) in the quiver of the endomorphism algebra; the cluster category case was considered in~\cite{BMR2} and the stable module category over a preprojective algebra was considered in~\cite{GLSrigid}. See also~\cite{BIRSm} and~\cite{KY}, where
mutation of quivers with potential \cite{DWZ1} has been studied in a categorical context.
There has been a lot of work on this subject: see the survey~\cite{Kcategorification} for more details.

The geometric mutation of partial triangulations mentioned above specialises to the usual flip of an arc in the triangulation case (see~\cite[Defn.\ 3.5]{FST}).
Coloured quivers similar to those considered here have been associated to $m$-cluster tilting objects in an $(m+1)$-Calabi-Yau category in~\cite{BuanThomas} (in this case, the number of colours is fixed at $m+1$). The geometric mutation we define here should also be compared with the geometric mutation for $m$-allowable arcs in a disk~\cite[Sect.\ 11]{BuanThomas}; see also the geometric model of the $m$-cluster category of type $A$ in~\cite{BaurMarsh}.

We also note that the $2$-Calabi-Yau tilting theorem of
Keller-Reiten~\cite[Prop.\ 2.1]{KR1} (see also
Koenig-Zhu~\cite[Cor.\ 4.4]{KZ} and Iyama-Yoshino~\cite[Prop.\ 6.2]{IY})
was recently generalised~\cite{BM2} to the general rigid object case, using Gabriel-Zisman localisation. This result suggests that the mutation of general rigid
objects should be considered.

We note that some of our definitions and results could be generalised
to the punctured case, except for that fact that we rely on results
in~\cite{BZ} which apply only to the unpunctured case and are not yet known
in full generality (see the recent~\cite{CILF}).
Hence we restrict here to the unpunctured case.

The paper is organised as follows. In Section 1 we set up notation and recall the
results we need. In Section 2 we define the mutation and the coloured quiver of a rigid object in a triangulated category. In Section 3 we define mutation and the coloured quiver of a partial triangulation in a marked surface.
In Section 4 we show that cutting along an arc corresponds categorically to
Iyama-Yoshino reduction. In particular, the coloured quiver after cutting along an
arc in a partial triangulation can be obtained from the coloured quiver of the
partial triangulation by deleting a vertex.
In Section 5 we show that, for a partial triangulation of a surface and the
corresponding rigid object in the cluster category of the surface, the two notions of coloured quiver coincide. In Section $6$ we show that mutation in the type $A$
case can be described purely in terms of the coloured quiver and give an explicit
description. We also give the example mentioned above in which the associated
coloured quiver contains infinitely many colours. Finally, in Section 7, we give
a partial explicit description and categorical interpretation of coloured quiver mutation. This last result holds in any
Hom-finite, Krull-Schmidt, 2-Calabi--Yau triangulated category.

\vskip 0.3cm
\noindent \textbf{Acknowledgements}
Robert Marsh would like to thank Aslak Bakke Buan for some helpful discussions.
Both authors would like to thank the referee for helpful comments which
improved an earlier version of this article.

%%%%%%%%%%%%%%%%%%%%%%%%%%%%%%%%%%%%%%%%%%%%%%%%%%%%%%%%%%%%%%%%%%%%%%%%%%%%%%%%%%%%%%%%%%%%%%%%%%%%%
\section{Preliminaries}
%%%%%%%%%%%%%%%%%%%%%%%%%%%%%%%%%%%%%%%%%%%%%%%%%%%%%%%%%%%%%%%%%%%%%%%%%%%%%%%%%%%%%%%%%%%%%%%%%%%%%

%%%%%%%%%%%%%%%%%%%%%%%%%%%%%%%%%%%%%%%%%%%%%%%%%%%%%%%%
\subsection{Riemann surfaces}
\label{ssection: Riemann surfaces}
%%%%%%%%%%%%%%%%%%%%%%%%%%%%%%%%%%%%%%%%%%%%%%%%%%%%%%%%

In this section, we recall some definitions and results
from~\cite{FST} and~\cite{LF1}.

We consider a pair $(S,M)$ consisting of
an oriented Riemann surface with boundary $S$ and
a finite set $M$ of marked points on the boundary of $S$,
with at least one marked point on each boundary component.
We refer to such a pair as a \emph{marked surface}.
We fix, once and for all, an orientation of $S$, inducing the
clockwise orientation on each boundary component.

Note that:
\begin{itemize}
 \item We do not assume the surface to be connected.
 \item We only consider unpunctured marked surfaces.
\end{itemize}

We will always assume that $(S,M)$ does not have any
component homeomorphic to a monogon, a digon or a triangle.

Up to homeomorphism, each component of $(S,M)$ is determined
by the following data:
\begin{itemize}
 \item the genus $g$,
 \item the number of boundary components $b$ and
 \item the number of marked points on each boundary component $\ens{n_1,\ldots,n_b}$.
\end{itemize}

An \emph{arc} $\gamma$ in $(S,M)$ is (the isotopy class relative to endpoints of)
a curve in $S$ whose endpoints belong to $M$,
which does not intersect itself (except possibly at endpoints)
and which is not contractible to a point.
The marked points on a boundary component divide it into segments, and we say that an arc
isotopic to an arc along one of these segments is a \textit{boundary arc}.
The term \emph{arc} will usually refer to a non-boundary arc.

The set of all (non-boundary) arcs in $(S,M)$ is denoted by $A^0(S,M)$.
Two arcs are said to be \textit{non-crossing} if their isotopy classes contain representatives which do not cross, i.e.\ their crossing number is zero.
If $\arcs$ is a collection of non-crossing arcs in $(S,M)$, we will denote
by $A_\arcs^0(S,M)$ the set of arcs in $(S,M)$ which do not cross
any arc in $\arcs$ and which do not belong to $\arcs$.

A partial triangulation of $(S,M)$ is a collection of non-crossing arcs.
A maximal collection of non-crossing arcs is called a \emph{triangulation}.
The number $n$ of arcs in any triangulation of a connected marked surface is given by the formula:
$$
n = 6g + 3b + c - 6,
$$
where $c$ is the number of marked points in $M$ (see e.g.~\cite[Prop.\ 2.10]{FST}).

Let $\tri$ be a triangulation. By~\cite[Sect. 4]{FST} and~\cite[Sect. 3]{LF1}
a quiver $Q=Q_\tri$, together with a potential (a linear combination
of cycles in $Q_{\tri}$ up to cyclic permutation) $W_\tri$ can be
associated to $\tri$ as follows.
The vertices of $Q$ are the arcs of the triangulation.
There is an arrow from
$\gamma$ to $\gamma'$ for each triangle in which $\gamma'$ follows
$\gamma$ with respect to the orientation of $S$, and the potential $W_{\tri}$
is the sum of all the 3-cycles; see Figure~\ref{figure:potentialexample},
where part of a triangulated surface is shown.

\begin{figure}
\begin{center}
\psfragscanon
\psfrag{a}{$\scriptstyle\al$}
\psfrag{b}{$\scriptstyle\beta$}
\psfrag{c}{$\scriptstyle\gamma$}
\includegraphics[width=4cm]{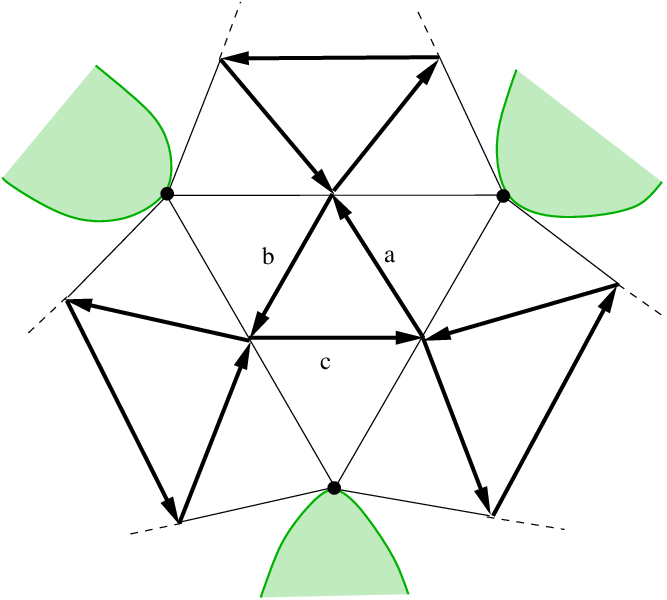}
\end{center}
\caption{The quiver with potential associated to a triangulated surface. The potential $W=\al\beta\gamma+\cdots$.}
\label{figure:potentialexample}
\end{figure}

For an arrow $a\in Q_1$, the cyclic derivative
$\partial_a$ sends a cycle $a_1\cdots a_d$
to the sum $\sum_{k=1}^d \delta_{a_k a}\, a_{k+1}\cdots a_d a_1 \cdots a_{k-1}$.
It is extended to potentials by linearity.
The {\it Jacobian algebra} of the quiver with potential $(Q_\tri,W_\tri)$
is the quotient of the complete path algebra $\widehat{kQ_{\tri}}$ by the
closure of the ideal generated by the cyclic derivatives $\partial_a W_\tri$,
for all $a\in Q_1$. We note that, by~\cite[Thm.\ 5.7]{CILF},
the Jacobian algebra can, in this case, be taken to be the quotient of the
path algebra $kQ_{\tri}$ by the ideal generated by the corresponding cyclic
derivatives in $kQ_{\tri}$.

\vspace{.2cm}
\noindent {\bf Theorem}:
{\it Let $\tri$ be a triangulation of a marked surface
$(S,M)$, and let $\tri'$ be the triangulation obtained
by flipping $\tri$ at an arc $\gamma$. Then:
\begin{itemize}
\item[(a)] \cite[Thm.\ 36]{LF1} The Jacobian algebra
$J(Q_\tri,W_\tri)$ is finite dimensional.
\item[(b)] \cite[Thm.\ 2.7]{ABCP} The Jacobian algebra
$J(Q_{\tri},W_{\tri})$ is gentle and Gorenstein of Gorenstein
dimension $1$.
 \item[(c)]\cite[Prop.\ 4.8]{FST} The quiver $Q_{\tri'}$ is
given by the Fomin--Zelevinsky mutation of $Q_\tri$
at the vertex corresponding to $\gamma$.
 \item[(d)]\cite[Thm.\ 30]{LF1} The quiver with potential
$(Q_{\tri'},W_{\tri'})$ is given by the QP mutation
(see~\cite[Sect. 5]{DWZ1}) of $(Q_\tri,W_\tri)$ at the vertex
corresponding to $\gamma$.
\end{itemize}
}

%%%%%%%%%%%%%%%%%%%%%%%%%%%%%%%%%%%%%%%%%%%%%%%%%%%%%%%%
\subsection{Cluster categories associated with Riemann surfaces}
%%%%%%%%%%%%%%%%%%%%%%%%%%%%%%%%%%%%%%%%%%%%%%%%%%%%%%%%

Let $K$ be a field.
If $\cat$ is a triangulated category, we will usually denote
its shift functor by $\shift$.
All the triangulated $K$-categories under consideration
in this paper are assumed to be Krull--Schmidt,
Hom-finite (all morphism spaces are finite-dimensional
 $K$-vector spaces)
and admit non-zero \textit{rigid objects} (objects $R$ such that
$\cat(R,\shift R)=0$).
All rigid objects will be assumed basic (their summands are pairwise
non-isomorphic).
We will assume moreover that the triangulated categories
are $2$-Calabi--Yau, so that there are bifunctorial isomorphisms
$\cat(X,\shift Y)\simeq D\cat(Y,\shift X)$ for all objects $X,Y$,
where $D$ is the vector space duality $D=\homph_K(-,K)$.
A rigid object $T$ is called a \textit{cluster tilting object} if, in addition,
for all objects $X$ in $\cat$, $\cat(X,\shift T) = 0 = \cat(T,\shift X)$
implies that $X$ belongs to $\add T$.

The main examples of such categories that we consider are the
(generalised) cluster categories associated with marked surfaces,
the definition of which is recalled in the following sections.

%%%%%%%%%%%%%%%%%%%%%%%%%%%%%%%%%%%%%%%
\subsubsection{Ginzburg dg algebras}
%%%%%%%%%%%%%%%%%%%%%%%%%%%%%%%%%%%%%%%

Let $(Q,W)$ be a quiver with potential (i.e.\ a QP). In this
paper, we are mostly interested in QPs arising from triangulations of
marked surfaces.

The Ginzburg dg-algebra $\Gamma(Q,W)$ is defined as follows:
First define a graded quiver $\overline{Q}$.
The vertices of $\overline{Q}$ are the vertices of $Q$,
and the arrows are given as follows:
\begin{itemize}
 \item the arrows of $Q$, of degree 0;
 \item for each arrow $\al$ in $Q$ from $i$ to $j$,
 an arrow $\al^\ast$ from $j$ to $i$, of degree $-1$;
 \item for each vertex $i$ in $Q$, a loop $e_i^\ast$ at $i$,
 of degree $-2$.
\end{itemize}
The underlying graded algebra of $\Gamma(Q,W)$
is the path algebra of the graded quiver $\overline{Q}$.
It is equipped with the unique differential $d$ sending
\begin{itemize}
 \item the arrows of degree 0 and each $e_i$ to 0;
 \item the arrow $\al^\ast$ to $\partial_\al W$, for each $\alpha\in Q_1$, and
 \item the loop $e_i^\ast$ to $e_i \left(\sum_\al [\al^\ast,\al] \right) e_i$,
 for $i\in Q_0$.
\end{itemize}

The cohomology of $\Gamma(Q,W)$ in degree zero is the Jacobian algebra $J(Q,W)$.

%%%%%%%%%%%%%%%%%%%%%%%%%%%%%%%%%%%%%%%
\subsubsection{Generalised cluster categories}
\label{ssection: generalised cluster categories}
%%%%%%%%%%%%%%%%%%%%%%%%%%%%%%%%%%%%%%%

The cluster categories associated with acyclic quivers
were introduced in~\cite{CCS} in the $A_n$ case and in~\cite{BMRRT}
in the acyclic case. Amiot defined, in~\cite{Acqw}, the
generalised cluster categories, associated with quivers
with potentials whose Jacobian algebra is finite dimensional.

Let $(Q,W)$ be a quiver with potential such that the
Jacobian algebra $J(Q,W)$ is finite dimensional,
and let $\Gamma = \Gamma(Q,W)$ be the associated Ginzburg dg algebra.

Let $\dc \Gamma$ be the derived category of $\Gamma$,
and let $\dc^b \Gamma$ be the bounded derived category.
The perfect derived category $\per \Gamma$ is
the smallest triangulated subcategory of $\dc \Gamma$
containing $\Gamma$ and stable under taking direct summands.

\vspace{.2cm}
\noindent {\bf Theorem} \cite[Sect. 6]{KDeformedCY}:
{\it
The Ginzburg dg algebra $\Gamma$ is homologically smooth
and 3-Calabi--Yau as a bimodule. In particular,
there is an inclusion $\dc^b \Gamma \subset\per \Gamma$.
}
\vspace{.2cm}

\vspace{.2cm}
\noindent {\bf Definition} \cite[Sect. 3]{Acqw}:
{\it
The (generalised) cluster category $\cat_{(Q,W)}$ associated
with the quiver with potential $(Q,W)$
is the Verdier localisation $\per \Gamma / \dc^b \Gamma$.
}
\vspace{.2cm}

This definition is motivated by the following:

\vspace{.2cm}
\noindent {\bf Theorem} \cite[Sect. 3]{Acqw}:
{\it
The cluster category $\cat_{(Q,W)}$ is Hom-finite
and 2-Calabi--Yau. Moreover, the image of $\Gamma$
in $\cat_{(Q,W)}$ is a cluster tilting object whose endomorphism
algebra is isomorphic to the Jacobian algebra $J(Q,W)$.
If $Q$ is acyclic, then $W = 0$ and the triangulated
category $\cat_{(Q,0)}$ is equivalent to the acyclic
cluster category $\cat_Q = D^b(Q)/\tau^{-1}[1]$
introduced in~\cite{BMRRT}.
}
\vspace{.2cm}

We also recall the $2$-Calabi-Yau tilting theorem which applies
in this context:

\vspace{.2cm}
\noindent {\bf Theorem} \cite[Prop.\ 2.1]{KR1}
Let $\cat$ be a triangulated Hom-finite Krull-Schmidt $2$-Calabi-Yau
category over a field $K$.
If $T$ is a cluster tilting object in $\cat$,
then the functor $\cat(T,\Sigma \,-)$ induces an equivalence between
the category $\cat/T$ and the category of finite
dimensional $\End_{\cat}(T)$-modules.
\vspace{.2cm}

Note that the assumption in the paper that $K$ be algebraically
closed is not required for this result.
We also note that this result has been generalised
(see~\cite[Prop.\ 6.2]{IY},~\cite[Sect.\ 5.1]{KZ}).

%%%%%%%%%%%%%%%%%%%%%%%%%%%%%%%%%%%%%%%
\subsubsection{Cluster categories from surfaces}
\label{ssection: cluster surfaces}
%%%%%%%%%%%%%%%%%%%%%%%%%%%%%%%%%%%%%%%

Let $(S,M)$ be a marked surface, and let
$\tri$ be a triangulation of $(S,M)$.
Let $(Q,W)$ be the quiver with potential
associated with $\tri$. The following particular
case of a theorem of Keller--Yang
shows that the cluster category $\cat_{(Q,W)}$
does not depend on the choice of a triangulation.
Let $\tri'$ be a triangulation of $(S,M)$
obtained from $\tri$ by a flip. Denote by $(Q',W')$
the associated quiver with potential.

\vspace{.2cm}
\noindent {\bf Theorem}~\cite{KY}:
{\it
There is a triangle equivalence
$\cat_{(Q',W')} \simeq \cat_{(Q,W)}$.
}
\vspace{.2cm}

Since any two triangulations of $(S,M)$ are related
by a sequence of flips, the theorem above shows
that the cluster category $\cat_{(Q,W)}$ is independent of
the choice of the triangulation $\tri$. The resulting
category is denoted $\cat_{(S,M)}$ and is called the
cluster category associated with the marked surface $(S,M)$.
(We refer also to~\cite[Theorem 5.1]{BIRSm}).

These categories have been studied by
Br\"ustle--Zhang in~\cite{BZ}.
We now recall those of their main results which will be used
in the article.

Fix a triangulation $\tri=\ens{\gamma_1,\ldots,\gamma_m}$
of $(S,M)$ with associated quiver with potential $(Q,W)$.
Let $T=T_1\oplus\cdots\oplus T_m$ be the image
of $\Gamma(Q,W)$ under $\per \Gamma(Q,W) \fl \cat_{(Q,W)} \simeq \cat_{(S,M)}$.
Note that $T$ is a cluster tilting object.

With each arc $\gamma$ not in $\tri$ is associated~\cite[Proposition 4.2]{ABCP}
an indecomposable $J(Q,W)$-module $I(\gamma)$. Let $X_{\gamma}$ be the unique
(up to isomorphism) indecomposable object in $\cat_{(S,M)}$ such that
$\cat_{(S,M)}(T,\shift X_\gamma) \simeq I(\gamma)$. Define $X_{\gamma_k} = T_k$,
for $k=1,\ldots,m$.

\vspace{.2cm}
\noindent {\bf Theorem}~\cite{BZ}:
{\it
\begin{itemize}
 \item The map $\gamma \mapsto X_\gamma$ is a bijection
 between the arcs of $(S,M)$ and the (isomorphism classes of)
 exceptional (i.e.\ indecomposable rigid) objects of $\cat_{(S,M)}$.
 \item For any two exceptional objects $X_\al$ and $X_\beta$,
 we have $\ext^1_{\cat_{(S,M)}}(X_\al,X_\beta) = 0$ if and only if
 the arcs $\al$ and $\beta$ do not cross.
 \item The shift functor of $\cat_{(S,M)}$ acts on the arcs of $(S,M)$
 by moving both endpoints \emph{clockwise} along the boundary
 to the next marked points.
\end{itemize}
}
\vspace{.2cm}

Note that a bijection with these properties is not unique in general.

We note that our choice of an orientation of the Riemann
surface differs from that of~\cite{BZ}, but coincides with
that of~\cite[Section 11]{BuanThomas}.

We extend the bijection in
the first part of the previous Theorem
to a bijection
between partial triangulations of $(S,M)$
and rigid objects in $\cat_{(S,M)}$ in the obvious way.

%%%%%%%%%%%%%%%%%%%%%%%%%%%%%%%%%%%%%%%%%%%%%%%%%%%%%%%%
\subsection{Iyama--Yoshino reduction}
\label{ssection: IY reductions}
%%%%%%%%%%%%%%%%%%%%%%%%%%%%%%%%%%%%%%%%%%%%%%%%%%%%%%%%

For an object
$X$ in a triangulated category $\cat$, we write ${}^\perp X$ for the full subcategory
of $\cat$ whose objects are those objects $Y$ of $\cat$ such that
$\Hom_{\cat}(Y,X)=0$. The subcategory $X^{\perp}$ is similarly defined. For an
additive subcategory $\dc$ of $\cat$, we write $\cat/[\dc]$ for the quotient category
whose objects are the same as the objects of $\cat$ with morphisms given by the
morphisms of $\cat$ modulo those morphisms factoring through $\dc$. If $\dc$ is the
additive closure of an object $X$ in $\cat$ then we just write $\cat/X$ for
$\cat/[\dc]$.

\vspace{.2cm}
\noindent {\bf Theorem}:~\cite[4.2,\,4.7]{IY}
\textit{Let $\cat$ be a $2$-Calabi-Yau triangulated category and $R$ a rigid object in $\cat$. Then the subfactor category ${}^{\perp}(\shift R)/R$ of $\cat$ is again a
$2$-Calabi-Yau triangulated category.}

We refer to the subfactor category ${}^{\perp}(\shift R)/R$ as the \emph{Iyama-Yoshino
reduction} of $\cat$ at $R$ and denote it $\cat_R$. We denote its shift by $\shift_R$ and the quotient functor ${}^{\perp}(\shift R) \gfl \cat_R$ by $\pi_R$. See also~\cite[II.2.1]{BIRS}.

We recall a result of Keller:

\vspace{.2cm}
\noindent {\bf Theorem}:~\cite[7.4]{KDeformedCY}
\textit{Let $Q,W$ be a quiver with potential
whose Jacobian algebra is finite dimensional.
Let $i$ be a vertex of $Q$ and let $\overline{P_i}$ be the image of the indecomposable projective module over $\Gamma(Q,W)$ corresponding to $i$, under the quotient functor
$\Pi:\per(\Gamma(Q,W))\to \cat_{Q,W}$. Then the Iyama-Yoshino reduction of $\cat_{(Q,W)}$
at $\overline{P_i}$ is triangle equivalent to $\cat_{(Q',W')}$, where $Q'$ is $Q$
with vertex $i$ (and all incident arrows) removed, and $W'$ is $W$ with all cycles
passing through $i$ deleted.}

%%%%%%%%%%%%%%%%%%%%%%%%%%%%%%%%%%%%%%%%%%%%%%%%%%%%%%%%%%%%%%%%%%%%%%%%%%%%%%%%%%%%%%%%%%%%%%%%%%%%%
\section{Coloured quivers for rigid objects}
%%%%%%%%%%%%%%%%%%%%%%%%%%%%%%%%%%%%%%%%%%%%%%%%%%%%%%%%%%%%%%%%%%%%%%%%%%%%%%%%%%%%%%%%%%%%%%%%%%%%%

%%%%%%%%%%%%%%%%%%%%%%%%%%%%%%%%%%%%%%%%%%%%%%%%%%%%%%%%
\subsection{Mutation and coloured quivers of rigid objects}
\label{subsection: coloured quivers for rigid objects}
%%%%%%%%%%%%%%%%%%%%%%%%%%%%%%%%%%%%%%%%%%%%%%%%%%%%%%%%

Let $K$ be a field.
Let $\cat$ be a $K$-linear Hom-finite, Krull--Schmidt,
$2$-Calabi--Yau triangulated $K$-category.
Let $R=R_1\oplus\cdots\oplus R_m$ be a basic rigid object in $\cat$ and
let $X$ be an indecomposable rigid object in $\cat$ Ext-orthogonal to
$R$, i.e. such that $\cat(X,\Sigma R)=0=\cat(R,\Sigma X)$.

For $c\in\zb$, consider triangles
$$
X^{(c)} \stackrel{f^c}{\gfl} B^{(c)}
\stackrel{g^c}{\gfl} X^{(c+1)} \gfl \shift X^{(c)}
$$
where $f^c$ is a minimal left $\add R$-approximation and
$g^c$ is a minimal right $\add R$-approximation
and where $X^{(0)}=X$. These will be called the
\emph{exchange triangles} for $X$ with respect to $R$.
They can be constructed using induction on $c$.
We will often write
$\comp^{(c)}_R X$ for $X^{(c)}$,
and $\comp$ for $\comp^{(1)}$; $\comp_R X$ will be referred to
as the \emph{\twist}\ of $X$ with respect to $R$.
Note that $\comp\comp^{(c)} = \comp^{(c+1)} = \comp^{(c)}\comp$
for all $c$.

These exchange triangles lift the triangles
$X^{(c)} \gfl 0 \gfl \shift_R X^{(c)} \gfl \ \shift_R X^{(c)}$
in the Iyama--Yoshino reduction $^\perp (\shift R) / R$
canonically to $\cat$.
Therefore, $X^{(c)}$ is indecomposable, rigid and $\ext$-orthogonal to $\add R$
for all $c$.
This justifies the following definition:

\vspace{.2cm}
\noindent {\bf Definition}:
{\it The \emph{mutation} of $R$ at $R_k$, for $k\in \{1,\ldots,m\}$, is the
rigid object
$$
\mut_{R_k} R = R/R_k \oplus \comp_{R/R_k} R_k.
$$
}
We will often write $\mut_k$ for $\mut_{R_k}$ and call it
the mutation at $k$.

We note that our use of the work of Iyama--Yoshino to define the mutation
above is similar to that of~\cite[Sect. 3]{BO} where cluster-tilting objects
are mutated at a non-indecomposable summand.

In~\cite{BuanThomas}, the authors associate coloured quivers to $d$-cluster--tilting
objects in ($d+1$)-Calabi--Yau categories. Here we use a similar definition
to associate a coloured quiver to $R$.

For an integer $d$, we write $\zb/d$ for the quotient of $\zb$ by the
ideal generated by $d$, identifying this with the set $\{0,1,\ldots ,d-1\}$
if $d>0$ and with $\zb$ otherwise.

\vspace{.2cm}
\noindent {\bf Definition}:
{\it The coloured quiver $Q=Q_R$ associated with the rigid object $R$
is defined as follows:
The set of vertices is $Q_0 = \ens{1,\ldots,m}$.
We label each vertex $k$ with the periodicity $d_k(R)$ (possibly infinite) of
the sequence of exchange triangles for $R_k$.
Fix two vertices $i,j$ and $c\in \mathbb{Z}/d_i(R)$. Then the number
$q_{(c)}(i,j)$ of $c$-coloured arrows from $i$ to $j$
is given by the multiplicity of $R_j$ in $B_i^{(c)}$, where
$$
R_i^{(c)} \stackrel{f_i^c}{\gfl} B_i^{(c)}
\stackrel{g_i^c}{\gfl} R_i^{(c+1)} \gfl \shift R_i^{(c)}
$$
are the exchange triangles as above for $R_i$ with respect to $R/R_i$.
}

\vspace{.2cm}
Note that by definition, $Q_R$ does not have any loops ($1$-cycles).

\vspace{.2cm}
\noindent {\bf Remark}:
\begin{itemize}
 \item Analogous definitions would apply to a functorially finite,
strictly full rigid subcategory $\rc$ of $\cat$ closed under direct sums and
direct summands, such that,
for each indecomposable $R\in\rc$, the subcategory
$\rc\setminus R$ is again functorially finite.
 \item Analogous definitions would also apply to rigid objects
in a stably $2$-Calabi--Yau Frobenius category. The use of Iyama--Yoshino
reduction would be replaced by~\cite[Theorem I.2.6]{BIRS} (see
also~\cite[Lemma 5.1]{GLSrigid} and~\cite[Sect.\ 4]{AO1}).
\end{itemize}

\subsection{Mutation of rigid objects and Iyama--Yoshino reductions}

The following lemma shows that the mutation of rigid objects
is well-behaved with respect to Iyama--Yoshino reductions.
This will turn out to be helpful in simplifying the proof
of Theorem~\ref{Theorem: mutation} in Section~\ref{section: mutation}.

Let $R=R_1\oplus \cdots \oplus R_m$ be a rigid object in $\cat$.
Let $\cat_R = \,{}^\perp(\shift R)/(R)$ be
the Iyama--Yoshino reduction of $\cat$ with respect to $R$,
with shift $\shift_R$.

Let $T$ be a rigid object in $\cat$, containing $R$ as a direct summand.
Assume that $T_k$ is a summand of $T$ but not of $R$, and
consider the exchange triangle with respect to $T/T_k$:
$$
(\ast) \hspace{.5cm}
T_k \stackrel{f}{\gfl} B_k^{(0)} \stackrel{g}{\gfl}
T_k^{(1)} \stackrel{\eps}{\gfl} \shift T_k.
$$
Here $B_k^{(0)}$ belongs to $\add \overline{T}$, where
$T=T_k\oplus \overline{T}$.

\begin{lemma}\label{lemma: reduction}
 The induced morphism $\underline{f}$ is
 a minimal left $\pi_R(\overline{T})$-approximation in $\cat_R$.
\end{lemma}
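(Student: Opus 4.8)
The plan is to pass the given exchange triangle $(\ast)$ through the Iyama--Yoshino quotient functor $\pi_R$ and analyze the resulting morphism $\underline{f} = \pi_R(f)$. Recall that $\cat_R = {}^\perp(\shift R)/(R)$, and since $T_k$ and all terms of $(\ast)$ lie in ${}^\perp(\shift R)$ (being summands of the rigid object $T$ which contains $R$, hence Ext-orthogonal to $R$), it makes sense to apply $\pi_R$. The two things to verify are (i) that $\underline{f}$ is a left $\add \pi_R(\overline{T})$-approximation, i.e.\ that every morphism from $T_k$ into an object of $\add\pi_R(\overline{T})$ factors through $\underline{f}$ in $\cat_R$, and (ii) that $\underline{f}$ is left minimal, i.e.\ any endomorphism $h$ of $\pi_R(B_k^{(0)})$ with $h\underline{f} = \underline{f}$ is an isomorphism.

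\textbf{Approximation property.} First I would establish (i). Let $\underline{u} \colon \pi_R(T_k) \to \pi_R(N)$ be a morphism in $\cat_R$ with $N \in \add\overline{T}$. Lifting to $\cat$, this is represented by some $u \colon T_k \to N$ in ${}^\perp(\shift R)$, possibly modulo maps factoring through $\add R$. Since $R$ is a summand of $\overline{T}$, any map factoring through $\add R$ already factors through $f$ (as $f$ is a left $\add\overline{T}$-approximation and $\add R \subseteq \add\overline{T}$), so I may treat $u$ as an honest morphism. Because $f$ is a left $\add\overline{T}$-approximation in $\cat$ and $N \in \add\overline{T}$, there is $v \colon B_k^{(0)} \to N$ with $vf = u$ in $\cat$. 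Applying $\pi_R$ gives $\underline{v}\,\underline{f} = \underline{u}$, which is exactly the approximation property in $\cat_R$. The key input here is simply that the original $f$ is an $\add\overline{T}$-approximation and that $\pi_R$ is a functor.

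\textbf{Minimality --- the main obstacle.} The harder half is (ii), left minimality, and this is where I expect the real work to lie. The subtlety is that minimality is \emph{not} automatically preserved by an additive quotient functor: a left minimal morphism in $\cat$ can fail to be minimal after passing to $\cat_R$, precisely because new endomorphisms of $\pi_R(B_k^{(0)})$ can appear (or become invertible) in the quotient. I would approach this by taking $\underline{h} \in \End_{\cat_R}(\pi_R(B_k^{(0)}))$ with $\underline{h}\,\underline{f} = \underline{f}$ and showing $\underline{h}$ is an isomorphism. Lifting $\underline{h}$ to an endomorphism $h$ of $B_k^{(0)}$ in $\cat$, the relation $\underline{h}\,\underline{f} = \underline{f}$ means $hf - f$ factors through $\add R$. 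The strategy is to correct $h$ by a map factoring through $\add R$ so as to obtain an honest endomorphism $h'$ with $h'f = f$ in $\cat$; then minimality of $f$ in $\cat$ (which holds since $f$ is a minimal left $\add\overline{T}$-approximation by construction of the exchange triangle) forces $h'$ to be an isomorphism, and hence $\underline{h} = \underline{h'}$ is an isomorphism in $\cat_R$. Carrying out this correction is the delicate point: one must show that the error term $hf - f \colon T_k \to B_k^{(0)}$, which factors through some object of $\add R$, can be absorbed by modifying $h$ through $\add R \subseteq \add\overline{T}$ without spoiling the approximation structure. Here I would lean on the fact that $B_k^{(0)} \in \add\overline{T}$ together with the Ext-orthogonality $\cat(T_k, \shift R) = 0 = \cat(R, \shift T_k)$, which controls how maps through $\add R$ interact with the triangle $(\ast)$; this orthogonality is what makes the lift of the trivial triangle $T_k \to 0 \to \shift_R T_k$ canonical, as noted in the construction of exchange triangles. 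Once the correction is in place, minimality in $\cat$ transfers to minimality in $\cat_R$ and the proof concludes.
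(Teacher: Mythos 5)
Your proposal is essentially correct, but it follows a different route from the paper, and the one step you flag as ``the delicate point'' is left unexecuted. The paper's proof is shorter and works at the level of the induced triangle in $\cat_R$: it invokes \cite[Lemma 4.8]{IY} to identify $\ext^1_{\cat_R}(\pi_R(T_k^{(1)}),\pi_R(\overline{T}))$ with $\ext^1_{\cat}(T_k^{(1)},\overline{T})=0$, reads off the approximation property from the long exact sequence of the triangle, and gets left minimality for free from the indecomposability of $T_k^{(1)}$ in $\cat_R$ (a non-minimal left approximation would split a nonzero summand off its cone). Your argument for the approximation property --- lift a representative $u$, factor it through $f$ in $\cat$, push back down --- is correct and if anything more elementary, since it uses only that $\pi_R$ is a full additive quotient. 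For minimality, your correction strategy does work, but not for the reason you suggest: the mechanism is not the Ext-orthogonality of $T_k$ and $R$ but the approximation property of $f$ itself. Concretely, if $hf-f=\beta\alpha$ with $\alpha\colon T_k\to R'$ and $R'\in\add R\subseteq\add\overline{T}$, then $\alpha=\delta f$ for some $\delta\colon B_k^{(0)}\to R'$ because $f$ is a left $\add\overline{T}$-approximation; setting $\gamma=\beta\delta$, which factors through $\add R$, gives $h'=h-\gamma$ with $h'f=f$ in $\cat$ and $\underline{h'}=\underline{h}$, so left minimality of $f$ in $\cat$ makes $h'$, hence $\underline{h}$, invertible. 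With that two-line completion your proof is sound; as written, the key step is asserted rather than proved, and the ingredient you point to for it is not the one that does the work. You might also compare the two minimality arguments: the paper's (indecomposability of the cone) is the cheaper of the two once one knows $T_k^{(1)}$ remains indecomposable in $\cat_R$.
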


\begin{proof}
 The triangle $(\ast)$ in $\cat$ induces a triangle
 $$
T_k  \stackrel{\underline{f}}{\gfl} B_k^{(0)} \stackrel{\underline{g}}{\gfl}
T_k^{(1)} \gfl \shift_R T_k,
 $$
 in $\cat_R$. We have:
 $$
 \cat_R(\shift_R^{-1}\pi_R (T_k^{(1)}),\pi_R(\overline{T})) \simeq
 \ext^1_{\cat_R}(\pi_R(T_k^{(1)}),\pi_R(\overline{T})) \simeq
 \ext^1_\cat(T_k^{(1)},\overline{T}) = 0,
 $$
using~\cite[Lemma 4.8]{IY}.
Hence, the morphism $\underline{f}$ is a left $\pi_R(\overline{T})$-approximation.
 It is left minimal since $T_k^{(1)}$ is indecomposable in $\cat_R$.
\end{proof}

\noindent {\bf Remark}: Write $B_k^{(0)} = R_k^{(0)} \oplus C_k^{(0)}$,
with $C_k^{(0)}$ having no summands in common with $R$.
Then the morphism
$T_k \stackrel{f'}{\gfl}C_k^{(0)}$ is not a left
$\overline{T}/R$-approximation in $\cat$ in general.
\vspace{.3cm}

Let $Q$ be the coloured quiver of $T$ in $\cat$, and
let $\underline{Q}$ be the coloured quiver of $\pi_R(T)$
in $\cat_R$. Lemma~\ref{lemma: reduction} has the following
immediate corollary:
\begin{cor}\label{corollary: reduction}
 The coloured quiver $\underline{Q}$ is the full subquiver
 of $Q$ with vertices corresponding to the indecomposable summands of $T/R$.
\end{cor}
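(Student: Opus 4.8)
The plan is to show that Corollary~\ref{corollary: reduction} follows by comparing exchange triangles vertex-by-vertex. Fix an indecomposable summand $T_k$ of $T$ that is not a summand of $R$. The coloured quiver $\underline{Q}$ of $\pi_R(T)$ records, at each such vertex, the multiplicities of the summands of $\pi_R(T)/\pi_R(R) = \pi_R(T/R)$ appearing in the middle terms of the exchange triangles for $\pi_R(T_k)$ with respect to $\pi_R(T/T_k)$ computed inside $\cat_R$. The coloured quiver $Q$ records, at the same vertex, the multiplicities of all summands of $T/T_k$ in the middle terms of the exchange triangles computed in $\cat$. So the statement amounts to two things: first, that the vertices of $\underline{Q}$ are exactly those of $Q$ indexed by summands of $T/R$ (which is immediate, since $\pi_R$ kills precisely the summands of $R$, so $\pi_R(T)$ has as its indecomposable summands the images $\pi_R(T_i)$ for $T_i$ a summand of $T/R$, and these remain indecomposable and pairwise non-isomorphic in $\cat_R$); and second, that the coloured arrows agree.

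The heart of the matter is the agreement of arrows, and here Lemma~\ref{lemma: reduction} does most of the work for the zeroth exchange triangle. First I would invoke the lemma to see that the minimal left $\pi_R(\overline{T})$-approximation $\underline{f}$ in $\cat_R$ is induced by the minimal left $\add\overline{T}$-approximation $f$ in $\cat$, where $\overline{T} = T/T_k$. The point is that $B_k^{(0)}$ lies in $\add\overline{T}$, and when we pass to $\cat_R$ its summands in $\add R$ are killed while its summands in $\add(T/R)$ survive. Thus the middle term of the $\cat_R$-triangle is $\pi_R(B_k^{(0)})$, whose summands are exactly the images of the $T/R$-summands of $B_k^{(0)}$, counted with the same multiplicities. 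This gives the equality of $0$-coloured arrows starting at $k$ with target in $T/R$. The key subtlety to address is that the approximations used to define $\underline{Q}$ are taken with respect to $\pi_R(\overline{T})$ rather than $\pi_R(\overline{T}/R)$; but since $\pi_R(R) = 0$ these two subcategories coincide in $\cat_R$, so there is no discrepancy.

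Next I would extend this from the zeroth triangle to all colours $c$ by an induction that simply iterates Lemma~\ref{lemma: reduction}. The exchange triangles are built inductively: the cotwist $\comp^{(c+1)}_R T_k = \comp^{(c)}_R(\comp_R T_k)$, and each step is governed by a minimal $\add\overline{T}$-approximation of the previous twist. Applying $\pi_R$ and Lemma~\ref{lemma: reduction} at each stage shows that $\pi_R$ of the $c$-th exchange triangle in $\cat$ is precisely the $c$-th exchange triangle for $\pi_R(T_k)$ in $\cat_R$, with the twist $\comp^{(c)}_R T_k$ mapping to the corresponding twist computed inside $\cat_R$. Since $\pi_R(B_k^{(c)})$ retains exactly the $T/R$-summands of $B_k^{(c)}$ with unchanged multiplicities, the $c$-coloured arrows from $k$ to any vertex $j$ indexing a summand of $T/R$ coincide in $Q$ and $\underline{Q}$. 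The only arrows of $Q$ that do not appear in $\underline{Q}$ are those whose target is a summand of $R$, and these are precisely the arrows deleted when passing to the full subquiver on the summands of $T/R$.

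The main obstacle I anticipate is bookkeeping rather than a deep difficulty: one must be careful that the notion of periodicity and the indexing of colours in $\zb/d_k$ are compatible between the two categories, and that minimality of approximations is genuinely preserved under $\pi_R$ (which is exactly the content of the last line of the proof of Lemma~\ref{lemma: reduction}, using that the twists remain indecomposable in $\cat_R$). The one genuine point to verify carefully is that no \emph{new} summands can appear in $\pi_R(B_k^{(c)})$ and that no surviving summand changes multiplicity—i.e.\ that $\pi_R$ restricted to $\add T$ is faithful enough on objects to preserve direct-sum decompositions. This follows because $\pi_R$ is an additive functor and the summands of $B_k^{(c)}$ lying in $\add(T/R)$ map to pairwise non-isomorphic indecomposables in $\cat_R$, so their multiplicities are read off unchanged. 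With these points in place the corollary is immediate.
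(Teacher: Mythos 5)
Your argument is correct and is exactly the route the paper takes: the paper simply declares the corollary an immediate consequence of Lemma~\ref{lemma: reduction}, and your write-up is the careful unpacking of why --- applying the lemma (and its evident analogue for the higher twists $T_k^{(c)}$) to identify the image under $\pi_R$ of each exchange triangle with the corresponding exchange triangle in $\cat_R$, and noting that $\pi_R$ kills exactly the $\add R$-summands of each $B_k^{(c)}$ while preserving the multiplicities of the surviving indecomposables. The only cosmetic addition worth making is a remark that the triangles with negative colour are handled by the dual statement for minimal right approximations.
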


Moreover, computing the minimal $\overline{T}$-approximation
$f\in\cat$ in the triangle $(\ast)$
amounts to computing the minimal $\add\,T_j$-approximation of $T_k$
in the Iyama--Yoshino reduction $\cat_{\overline{T}/T_j}$
for all $j\neq k$. More precisely:

\begin{lemma}\label{lemma: CY approx}
Let $R=R_1\oplus\cdots\oplus R_m$ be a rigid object in $\cat$
and let $1\leq k\leq m$.
For each $j=1,\ldots,m$, let $\cat_j$ denote the Iyama--Yoshino
reduction of $\cat$ with respect to $R/(R_k\oplus R_j)$.
For $j\neq k$, let $f_j:R_k\gfl R_j^{n_j}$ be a map in $\cat$ be such that
$R_k\stackrel{\underline{f_j}}{\gfl}R_j^{n_j}$
is a minimal left $\add R_j$-approximation in $\cat_j$.
Then the morphism:
$$R_k \stackrel{\left[f_j \right]}{\gfl} \bigoplus_{j\neq k} R_j^{n_j}$$
is a minimal left $\add R/R_k$-approximation in $\cat$.
\end{lemma}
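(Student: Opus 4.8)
The plan is to translate the approximation problem into a question about generators of a finitely generated module over the finite-dimensional algebra $\Lambda := \End_\cat(R/R_k)$, and then to invoke Nakayama's lemma, which applies because $\cat$ is Hom-finite and so $\operatorname{rad}\Lambda$ is nilpotent. Write $\overline R = R/R_k = \bigoplus_{i\neq k}R_i$; since $R$ is rigid, $R_k$ is $\ext$-orthogonal to $\overline R$, so the minimal left $\add\overline R$-approximation exists. Set $M = \cat(R_k,\overline R)$, a left $\Lambda$-module via post-composition, so that $e_jM = \cat(R_k,R_j)$ for the idempotent $e_j$ at $R_j$. The standard dictionary says that a map $R_k\to B$ with $B\in\add\overline R$ is a left $\add\overline R$-approximation precisely when its components generate $M$, and is moreover minimal precisely when those components descend to a basis of $\operatorname{top}M = M/(\operatorname{rad}\Lambda)M = \bigoplus_{j\neq k}\operatorname{top}_jM$. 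Thus it suffices to show that the components of $f=[f_j]$ form such a basis of $\operatorname{top}M$.

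The heart of the argument is a vertex-by-vertex identification of $\operatorname{top}_jM$ with a Hom-space in the reduction $\cat_j$. Writing $Z_j = R/(R_k\oplus R_j)$, both $R_k$ and $R_j$ survive in $\cat_j = {}^\perp(\shift Z_j)/(Z_j)$, and $\cat_j(R_k,R_j) = \cat(R_k,R_j)/[Z_j]$, where $[Z_j]$ denotes the maps factoring through $\add Z_j$. On the other hand, the $R_j$-component $e_j(\operatorname{rad}\Lambda)M$ is the sum, over the summands $R_l$ of $\overline R$, of the images of $\operatorname{rad}\Hom(R_l,R_j)\circ\cat(R_k,R_l)$ in $\cat(R_k,R_j)$; for $l\neq j$ every map $R_l\to R_j$ is radical, so these contributions are exactly $[Z_j]$, while the term $l=j$ is $\operatorname{rad}\End(R_j)\circ\cat(R_k,R_j)$. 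Since $\End_{\cat_j}(R_j) = \End_\cat(R_j)/[Z_j]$ is local with radical the image of $\operatorname{rad}\End_\cat(R_j)$, this yields a canonical isomorphism $\operatorname{top}_jM\cong\operatorname{top}\,\cat_j(R_k,R_j)$. Verifying this identification — that reduction at the remaining summands quotients out precisely the maps $R_k\to R_j$ factoring through them and matches the two radical filtrations — is the step I expect to require the most care.

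With this in hand the conclusion is quick. By hypothesis $\underline{f_j}:R_k\to R_j^{n_j}$ is a minimal left $\add R_j$-approximation in $\cat_j$, so its $n_j$ components form a minimal generating set of $\cat_j(R_k,R_j)$ over $\End_{\cat_j}(R_j)$, hence a basis of $\operatorname{top}\,\cat_j(R_k,R_j)$; transporting along the isomorphism of the previous paragraph, the images in $\cat$ of the components of $f_j$ give a basis of $\operatorname{top}_jM$. Running over all $j\neq k$ shows that the components of $f=[f_j]$ form a basis of $\operatorname{top}M$. Nakayama's lemma then gives that they generate $M$, so $f$ is a left $\add\overline R$-approximation, and since they form a basis of the top this generating set is minimal, so $f$ is a minimal such approximation. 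As an alternative to the module computation of the second paragraph, one could instead feed the already-established minimal approximation $R_k\to B^0$ into Lemma~\ref{lemma: reduction} with reduced object $Z_j$, where $\pi_{Z_j}(\overline R)=R_j$, to see directly that its reduction in $\cat_j$ is a minimal $\add R_j$-approximation and hence that $B^0\cong\bigoplus_{j\neq k} R_j^{n_j}$; but the route above is self-contained.
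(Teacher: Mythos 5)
Your proof is correct, and it reaches the conclusion by a genuinely different (though ultimately equivalent in mechanism) route from the paper. The paper argues directly inside $\cat$: it takes an arbitrary morphism $f:R_k\to R_i$, uses the approximation property of $\underline{f_i}$ in $\cat_i$ to write $f=g_1[f_j]+\al_1\beta_1$ with $\al_1$ radical, then repeatedly feeds the correction terms back through the reductions $\cat_j$ to produce $f=\al_1\cdots\al_n\beta_n+(g_1+\cdots+g_n)[f_j]$ with all $\al_r$ radical, and kills the leftover composition using nilpotency of $\operatorname{rad}\End(R)$. You instead package that same nilpotency into Nakayama's lemma for the module $M=\cat(R_k,R/R_k)$ over $\Lambda=\End(R/R_k)$, after the key identification $\operatorname{top}_jM\cong\operatorname{top}\,\cat_j(R_k,R_j)$ — which you verify correctly: the off-diagonal part of $e_j(\operatorname{rad}\Lambda)M$ is exactly the ideal of maps factoring through $\add\bigl(R/(R_k\oplus R_j)\bigr)$, i.e.\ exactly what the Iyama--Yoshino reduction $\cat_j$ quotients out, and the diagonal part matches the radical of the local ring $\End_{\cat_j}(R_j)$. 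Your version buys a cleaner, more structural statement (minimal left approximations correspond to projective covers, and each reduction $\cat_j$ computes precisely the vertex-$j$ radical quotient), and it handles the approximation property and minimality in one stroke; the paper's version avoids setting up the module-theoretic dictionary and stays entirely within the triangulated category, at the cost of an explicit iteration. One small caveat: the alternative you sketch at the end via Lemma~\ref{lemma: reduction} would identify the multiplicities in the minimal approximation $R_k\to B^0$ (i.e.\ that $B^0\cong\bigoplus_{j\neq k}R_j^{n_j}$) but not by itself show that the given map $[f_j]$ is an approximation, so it proves slightly less than the stated lemma; your main argument, however, is complete.
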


\begin{proof}
Let $i\neq k$, and let $R_k \stackrel{f}{\gfl}R_i$
be an arbitrary morphism in $\cat$.
Since $\underline{f_i}$ is an $\add R_i$-approximation in
$\cat_i$, there are morphisms
$\bigoplus_{j\neq k} R_j^{n_j} \stackrel{g_1}{\gfl} R_i$,
$R_k \stackrel{\beta_1}{\gfl} \bigoplus_{j\neq i,k} R_j^{a_j^{(1)}}$ and
$\bigoplus_{j\neq i,k} R_j^{a_j^{(1)}} \stackrel{\al_1}{\gfl} R_i$ in $\cat$
(for some $a_j^{(1)}$) such that
$f = g_1 [f_j] + \al_1 \beta_1$. Note that $\al_1$ must be a radical
map, as no summand of its domain is isomorphic to $R_i$.

Reducing to $\cat_{j}$ for some $j\not=i,k$, we see that the
component $\beta_{1,j}$ of $\beta_1$ mapping to
$R_j^{a_j^{(1)}}$ factors through $f_j:R_k \rightarrow R_j^{n_j}$
up to a map factoring through $\add \oplus_{l\not=j,k}R_{l}$.
That is, we can write $\beta_{1,j}$ as $u_jf_j+w_jv_j$
for some $u_j:R_j^{n_j}\to R_j^{a_j^{(1)}}$, $v_j:R_k\to X_j$,
and $w_j:X_j\rightarrow R_k$,
where $X_j\in \add \oplus_{l\not=j,k}R_l$. Note that
$w_j$ is a radical map, since none of the summands in $X_j$ are
isomorphic to $R_j$.

Adding over all $j$ for $j\not=i,k$, we obtain maps
$\al_2:\oplus_{l\not=k}R_l^{a_l^{(2)}}\rightarrow \oplus_{j\not=i,k}R_j^{a_j^{(1)}}$,
$\beta_2:R_k\rightarrow \oplus_{l\not=k}R_l^{a_l^{(2)}}$ and
$\gamma_2:\oplus_{j\not=k}R_j^{n_j}\rightarrow \oplus_{j\not=i,k}R_j^{a_j^{(1)}}$
(for some $a_l^{(2)}$) such that $\beta_1=\al_2\beta_2+\gamma_2[f_j]$.
Setting $g_2=\al_1\gamma_2$ we obtain
$\al_1\beta_1=\al_1\al_2\beta_2+g_2[f_j]$, so
$$f=g_1[f_j]+\al_1\beta_1=\al_1\al_2\beta_2+(g_1+g_2)[f_j].$$
Here, $\al_2$ is a radical map, since all of its summands, the $w_j$,
are radical. See Figure~\ref{figure: CY approx}.

\begin{figure}
$$
\xymatrix{
\oplus_{j\not=k} R_j^{a_j^{(r)}} \ar_{\al_r}[r] &
\cdots \ar[r] &
\oplus_{j\not=k} R_j^{a_j^{(2)}} \ar_{\al_2}[r] &
\oplus_{j\not=i,k} R_j^{a_j^{(1)}} \ar_(.6){\al_1}[r] &
R_i \\
&& \ \ \ \ \cdots \\
&&& R_k \ar_{[f_j]}[r]
\ar^{\beta_r}[llluu] \ar_{\beta_2}[luu] \ar^{\beta_1}[uu] \ar_f[uur]
& \oplus_{j\not=k} R_j^{n_j} \ar_{g_1,g_2,\ldots ,g_r}[uu]
}
$$
\caption{Proof of Lemma~\ref{lemma: CY approx}}
\label{figure: CY approx}
\end{figure}

Iterating this step we construct, for all $r\geq 3$, morphisms
$g_r \,:\,\bigoplus_{j\neq k} R_j^{n_j} \gfl R_i$,
$\al_r \,:\, \bigoplus_{j\neq k} R_j^{a_j^{(r)}} \gfl
\bigoplus_{j\neq k} R_j^{a_j^{(r-1)}}$, and
$\beta_r \,:\, R_k \gfl \bigoplus_{j\neq k} R_j^{a_j^{(r)}}$
for $r=3,\ldots n$ (and some $a_l^{(r)}$), such that
$f = \beta_n \al_n \cdots \al_1 + (g_1 + \cdots + g_n)[f_j]$ and
each of the $\al_i$ is a radical map.
Since $\cat$ is Hom-finite, the radical of $\End (R)$ is nilpotent
and the composition $\beta_n \al_n \cdots \al_1$
vanishes for $n$ big enough. Therefore $f$ factors through $[f_j]$
and $[f_j]$ is a left $\add R/R_k$-approximation in $\cat$.
The left minimality of $[f_j]$ follows from the left minimality
of each $\underline{f_j}$.
\end{proof}

%%%%%%%%%%%%%%%%%%%%%%%%%%%%%%%%%%%%%%%%%%%%%%%%%%%%%%%%%%%%%%%%%%%%%%%%%%%%%%%%%%%%%%%%%%%%%%%%%%%%%
\section{Coloured quivers for partial triangulations}
\label{section: coloured quivers for partial triangulations}
%%%%%%%%%%%%%%%%%%%%%%%%%%%%%%%%%%%%%%%%%%%%%%%%%%%%%%%%%%%%%%%%%%%%%%%%%%%%%%%%%%%%%%%%%%%%%%%%%%%%%

Let $(S,M)$ be an unpunctured oriented Riemann surface with boundary and marked points. We will always assume that each boundary component contains at least one marked point and that no component of $(S,M)$ is a monogon, a digon or a triangle.

%%%%%%%%%%%%%%%%%%%%%%%%%%%%%%%%%%%%%%%%%%%%%%%%%%%%%%%%
\subsection{Composition of arcs}
%%%%%%%%%%%%%%%%%%%%%%%%%%%%%%%%%%%%%%%%%%%%%%%%%%%%%%%%

Let $\al$ and $\beta$ be two oriented arcs in $(S,M)$
with $\beta(1)=\al(0)$. The composition $\al\beta$
is the arc given by
$$
t \longmapsto \left\{
\begin{array}{ll}
\beta(2t) & \text{if } 0 \leq t \leq 1/2 \\
\al(2t-1) & \text{if } 1/2 \leq t \leq 1.
\end{array}
\right.
$$
See Figure~\ref{figure: composition of arcs}.

Note that the composition only makes sense for oriented arcs.

\begin{figure}
\begin{center}
\psfragscanon
\psfrag{a}{$\al$}
\psfrag{b}{$\beta$}
\psfrag{ab}{$\al\beta$}
\includegraphics[width=1.2in]{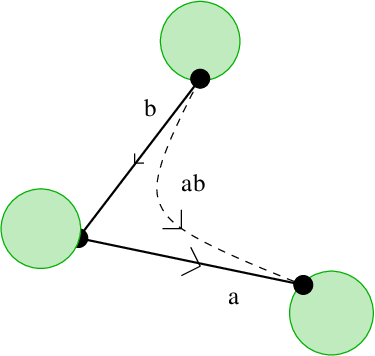}\hspace{1cm}
\includegraphics[width=3in]{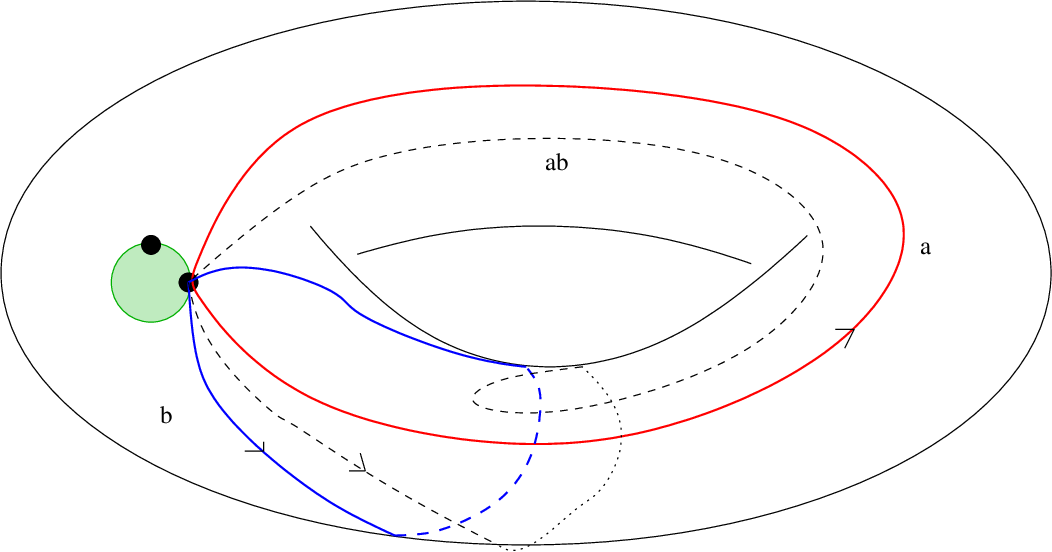}
\end{center}
\caption{Composition of arcs}\label{figure: composition of arcs}
\end{figure}

%%%%%%%%%%%%%%%%%%%%%%%%%%%%%%%%%%%%%%%%%%%%%%%%%%%%%%%%
\subsection{Twisting an arc with respect to a partial triangulation}
\label{subsection: mutating arcs}
%%%%%%%%%%%%%%%%%%%%%%%%%%%%%%%%%%%%%%%%%%%%%%%%%%%%%%%%

In this section, our aim is to generalise the flip of
triangulations to the \twist\ of an arc with respect to a partial triangulation.

Let $\arcs$ be a partial triangulation of $(S,M)$, i.e.\ a collection of
non-crossing arcs $\gamma_1,\ldots,\gamma_m$. Let $\al$ be an arc in $(S,M)$
which does not cross $\arcs$ and does not belong to $\arcs$, i.e.\
$\al\in A^0_{\arcs}(S,M)$.
We define the \emph{\twist}\ of $\al$ with respect to $\arcs$ as follows:
Choose an orientation $\underline{\al}$ of $\al$.
Consider the arcs of the partial triangulation $\arcs$
which admit $\underline{\al}(0)$ as an endpoint.
Restrict to a neighbourhood of $\underline{\al}(0)$ small enough
not to contain any loop.
The orientation of the boundary containing $\underline{\al}(0)$
induces an ordering on the parts of the arcs included in the neighbourhood
(see Figure~\ref{figure: twist of an arc}).
Let $\al_s$ be the arc, in $\arcs$ or boundary, which follows $\al$ in this ordering
(note that it is not allowed to be $\alpha$ itself).
Similarly, define $\al_t$ with respect to the endpoint $\al(1)$.
These will be called the \emph{arcs following $\al$ in $\arcs$}.

We give $\al_s$ and $\al_t$ the orientations $\underline{\al_s}$ and
$\underline{\al_t}$ described in the local pictures of
Figure~\ref{figure: twist of an arc}.
Note that this orientation coincides with the orientation of the boundary
if $\al_s$ or $\al_t$ is a boundary arc.

\begin{figure}
\begin{center}
\psfragscanon
\psfrag{a}{$\underline{\al}$}
\psfrag{b}{$\underline{\al_s}$}
\psfrag{c}{$\underline{\al_t}$}
\includegraphics[width=3in]{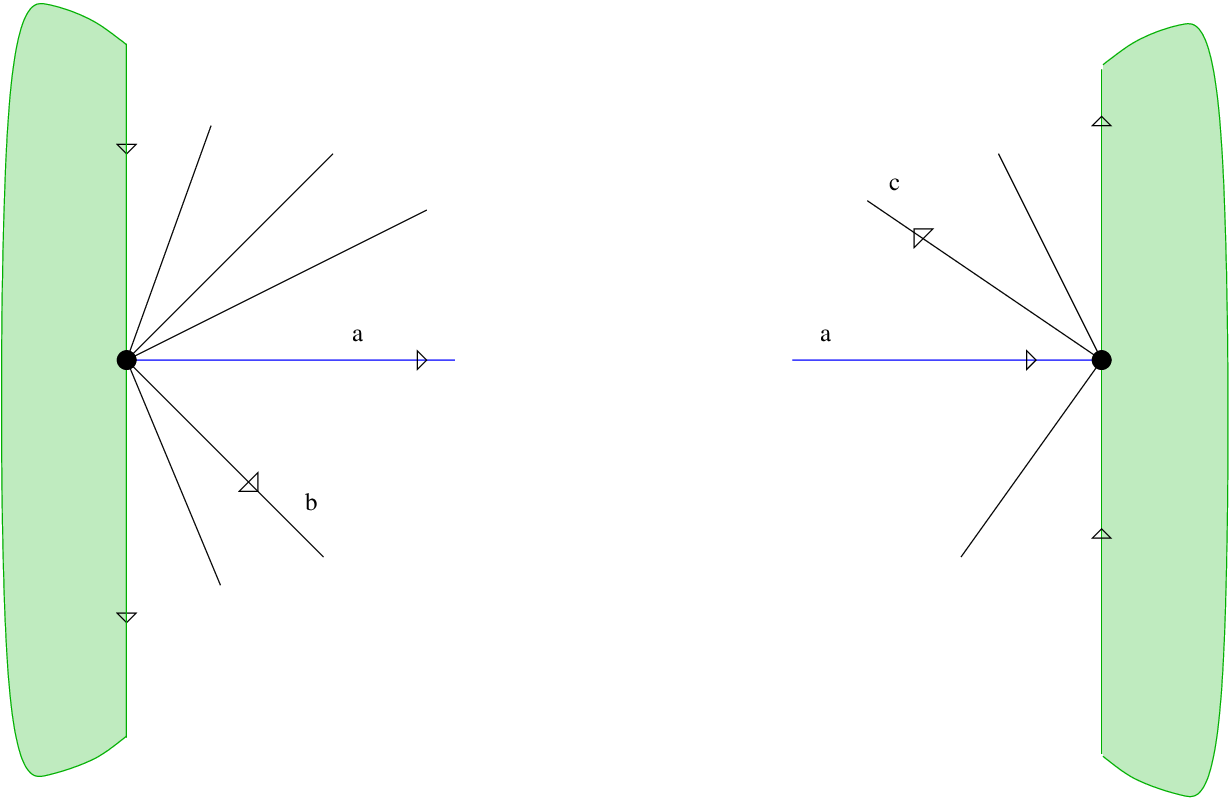}
\end{center}
\caption{Orientation of $\al_s$ and $\al_t$}\label{figure: twist of an arc}
\end{figure}

For an oriented arc $\underline{\beta}$, let $[\underline{\beta}]$ denote the
underlying unoriented arc. Define the \emph{\twist}\ of the arc $\al$ with respect to $\arcs$ to be the underlying unoriented arc of the composition:
$$
\comp_\arcs(\al) = [\underline{\al_t} \,\underline{\al}\; \underline{\al_s}^{-1}].
$$
See Figure~\ref{figure: mutation of partial triangulations}
for an example of a twist.
Note that the definition of the arc $\comp_\arcs(\al)$
does not depend on the choice of an orientation for $\al$.
It is easy to check, using a case-by-case analysis depending on whether
or not $\alpha$, $\alpha_s$, $\alpha_t$
are loops and the order in which they appear at their end-points, that
$\comp_{\arcs}(\alpha)$ does not cross any arc in $\arcs$, i.e.\ that
$\comp_{\arcs}(\alpha)\in A^0_{\arcs}(S,M)$.

\begin{figure}
\begin{center}
\psfragscanon
\psfrag{a}{$\al$}
\psfrag{b}{$\comp(\al)$}
\psfrag{c}{$\gamma_1$}
\psfrag{d}{$\gamma_2$}
\includegraphics[width=4in]{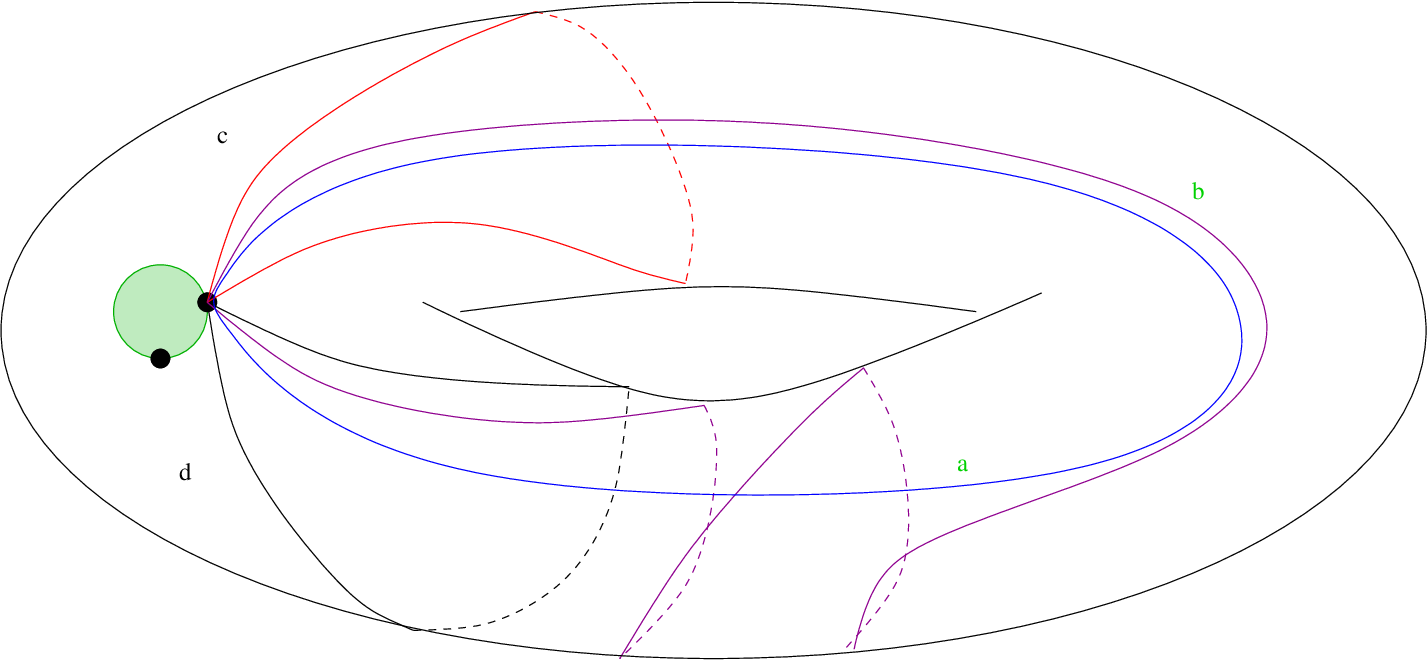}
\end{center}
\caption{An example of a \twist}\label{figure: mutation of partial triangulations}
\end{figure}

The \twist\ with respect to $\arcs$
is invertible with inverse $\comp_\arcs^{-1}$, which can
also be defined similarly.

%%%%%%%%%%%%%%%%%%%%%%%%%%%%%%%%%%%%%%%%%%%%%%%%%%%%%%%%
\subsection{Mutation of partial triangulations}
\label{subsection: mutation of partial triangulations}
%%%%%%%%%%%%%%%%%%%%%%%%%%%%%%%%%%%%%%%%%%%%%%%%%%%%%%%%

Let $\arcs$ be a partial triangulation of $(S,M)$,
and let $\beta\in\arcs$.
Write $\arcs = \overline{\arcs} \sqcup \ens{\beta}$.
The \emph{mutation} of $\arcs$ at $\beta$ is the partial
triangulation
$$
\mu_\beta \arcs = \overline{\arcs} \sqcup \ens{\comp_{\overline{\arcs}}(\beta)}.
$$
If $\arcs$ is a triangulation, then, for $\beta\in\arcs$,
$\mu_\beta \arcs$ is the usual flip of $\arcs$ at the arc $\beta\in\arcs$;
see~\cite[Defn.\ 3.5]{FST}.

%%%%%%%%%%%%%%%%%%%%%%%%%%%%%%%%%%%%%%%%%%%%%%%%%%%%%%%%
\subsection{Coloured quivers}
\label{ssection: coloured quivers}
%%%%%%%%%%%%%%%%%%%%%%%%%%%%%%%%%%%%%%%%%%%%%%%%%%%%%%%%

Let $\arcs$ be a partial triangulation and $\al$ an arc which is
not an element of $\arcs$ and does not cross $\arcs$. Let $\beta$ be
an arc in $\arcs$.
For all $c\in\zb$, define the numbers $m_\arcs^{(c)}(\al,\beta)$ by:
\begin{eqnarray*}
 m_\arcs(\al,\beta) & = &
\left\{
\begin{array}{cl}
2 & \text{if } \beta = \al_s = \al_t \\
1 & \text{if } \beta\in\ens{\al_s,\al_t} \text{ and } \al_s \neq \al_t \\
0 & \text{otherwise},
\end{array}
\right. \\
m_\arcs^{(c)}(\al,\beta) & = &
m_\arcs(\comp_\arcs^c(\al),\beta).
\end{eqnarray*}

We associate a coloured quiver $Q_\arcs$ with a partial triangulation
$\arcs=\ens{\gamma_1,\ldots,\gamma_m}$ in the following way:

\vspace{.2cm}
\noindent {\bf Definition}
{\it The coloured quiver $Q_\arcs$ associated with
the partial triangulation $\arcs$ is defined as follows:
The set of vertices is $Q_0=\ens{1,\ldots,m}$.
We label each vertex $i$ with the smallest integer $d=d_i(\arcs)$ such
that $\kappa^d_{\arcs\setminus \gamma_i}(\gamma_i)=\gamma_i$, or
with zero if no such integer exists.
Fix two distinct vertices $i,j$ and $c\in \mathbb{Z}/d_i(\arcs)$.
Then the number $q_{(c)}(i,j)$ of $c$-coloured arrows from vertex $i$ to vertex
$j$ is given by $m_{\arcs_i}^{(c)}(\gamma_i,\gamma_j)$, where
$\arcs_i = \arcs\setminus\ens{\gamma_i}$.}

Note that $Q_{\arcs}$, by definition, contains no loops.
For an example of a coloured quiver associated to a partial
triangulation of a torus and the effect of mutation on the quiver,
see Section~\ref{ssection: infinitecolour}.

%%%%%%%%%%%%%%%%%%%%%%%%%%%%%%%%%%%%%%%%%%%%%%%%%%%%%%%%%%%%%%%%%%%%%%%%%%%%%%%%%%%%%%%%%%%%%%%%%%%%%
\section{Cutting along an arc and CY reduction}
\label{section: cutting along an arc and CY reduction}
%%%%%%%%%%%%%%%%%%%%%%%%%%%%%%%%%%%%%%%%%%%%%%%%%%%%%%%%%%%%%%%%%%%%%%%%%%%%%%%%%%%%%%%%%%%%%%%%%%%%%

Let $(S,M)$ be as in Section~\ref{section: coloured quivers for partial triangulations}.

%%%%%%%%%%%%%%%%%%%%%%%%%%%%%%%%%%%%%%%%%%%%%%%%%%%%%%%%
\subsection{Cutting along an arc}
%%%%%%%%%%%%%%%%%%%%%%%%%%%%%%%%%%%%%%%%%%%%%%%%%%%%%%%%

Let $\al$ be an arc on $(S,M)$ not homotopic to a
point or a boundary arc. Fix a representative
of $\al$, also denoted by $\al$, whose intersection
with the boundary of $S$ consists only of its endpoints.
Then the marked surface obtained from $(S,M)$ by \textit{cutting along the arc}
$\al$ is the Riemann surface with boundary obtained by
cutting along the arc $\al$ together with the image
of the marked points $M$ after cutting.
Up to homeomorphism, it does not depend on the choice of representative
of $\al$. We will denote it by $(S,M)/\al$.
Note that if $\al$ is not a loop, then each endpoint of $\al$ gives rise to two distinct marked points in $(S,M)/\al$. If $\al$ is a loop, its endpoint
gives rise to three distinct marked points in $(S,M)/\al$.

The resulting marked surface cannot contain a monogon as a
connected component, since $\al$ is not homotopic to a point.
No connected component can be a bigon, since $\al$ is not a boundary arc.
If a component homeomorphic to a triangle has been created, we remove it.

There is a natural bijection between the arcs on
$(S,M)/\al$ and the arcs of $(S,M)$ which do not cross
the arc $\al$. Moreover, the (partial) triangulations of
$(S,M)/\al$ correspond, through this bijection, to the
(partial) triangulations of $(S,M)$ containing the arc $\al$.

\begin{rk}\label{remark: gluing}
The surface $(S,M)/\al$ can also be constructed as follows.
Let $\tri$ be a triangulation of $(S,M)$ containing $\al$.
The surface $(S,M)$ is then obtained from the triangles
of the triangulation by gluing matching sides of triangles
in a prescribed orientation. The surface $(S,M)/\al$
is obtained from the same triangles by respecting the same gluings
except for the sides which correspond to $\al$, which are not glued together anymore.
\end{rk}

Given a collection $\arcs$ of non-crossing arcs,
one can cut successively along each arc. Whatever order
is chosen yields the same new surface, by Remark~\ref{remark: gluing}.
The corresponding surface will be called the \emph{reduction}
of $(S,M)$ with respect to $\arcs$, and will be denoted
by $(S,M)/\arcs$. We will denote the natural bijection between
$A^0_{\arcs}(S,M)$ and $A^0((S,M)/\arcs)$ by $\pi_{\arcs}$.

%%%%%%%%%%%%%%%%%%%%%%%%%%%%%%%%%%%%%%%%%%%%%%%%%%%%%%%%
\subsection{Compatibility with CY reduction}
%%%%%%%%%%%%%%%%%%%%%%%%%%%%%%%%%%%%%%%%%%%%%%%%%%%%%%%%

Let $R$ be a basic rigid object in $\cat_{(S,M)}$,
and let $\arcs$ be the associated partial triangulation.
We denote by $\cat_R =\,\!\!^\perp(\shift R)/(R)$
the Calabi--Yau reduction of $\cat_{(S,M)}$
with respect to $R$, and by $(S,M)/\arcs$
the marked surface obtained from $(S,M)$
by cutting along the arcs of $\arcs$.

\begin{prop}\label{proposition: CY}
The triangulated categories
$\cat_{(S,M)/\arcs}$ and $\cat_R$ are equivalent.
\end{prop}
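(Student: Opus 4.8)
The plan is to prove Proposition~\ref{proposition: CY} by reducing to the case of a single arc via induction, and in that base case invoking Keller's reduction theorem (\cite[7.4]{KDeformedCY}) together with the surface-combinatorics of cutting.

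First I would reduce to the case where $R=X_\al$ is indecomposable, i.e.\ $\arcs=\{\al\}$ consists of a single arc. Indeed, suppose the single-arc case is known. Write $\arcs = \overline{\arcs}\sqcup\{\al\}$ for some arc $\al\in\arcs$. Iyama--Yoshino reduction is iterative: since $R=\overline{R}\oplus X_\al$, one has a canonical triangle equivalence $\cat_R \simeq (\cat_{X_\al})_{\pi_{X_\al}(\overline{R})}$, reducing first at $X_\al$ and then at the image of the remaining summands. Geometrically, cutting is also iterative and order-independent by Remark~\ref{remark: gluing}, so $(S,M)/\arcs = \bigl((S,M)/\al\bigr)/\pi_\al(\overline{\arcs})$. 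Thus an induction on the number of arcs in $\arcs$ reduces everything to the single-arc statement $\cat_{X_\al}\simeq \cat_{(S,M)/\al}$, provided the inductive step is set up so that $\pi_\al(\overline{R})$ is the rigid object in $\cat_{(S,M)/\al}$ corresponding to the partial triangulation $\pi_\al(\overline{\arcs})$ under the Br\"ustle--Zhang bijection for the cut surface.

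For the base case I would proceed as follows. Choose a triangulation $\tri$ of $(S,M)$ containing $\al$; this exists since $\al$ is non-crossing, and its vertex set includes the vertex $i$ corresponding to $\al$. Let $(Q,W)=(Q_\tri,W_\tri)$ be the associated quiver with potential, so that $\cat_{(S,M)}\simeq\cat_{(Q,W)}$ and the cluster tilting object $T$ has summand $T_i=X_\al$, which is the image $\overline{P_i}$ of the indecomposable projective $\Gamma(Q,W)$-module at $i$. Keller's theorem \cite[7.4]{KDeformedCY} then gives a triangle equivalence $\bigl(\cat_{(Q,W)}\bigr)_{\overline{P_i}}\simeq\cat_{(Q',W')}$, where $Q'$ deletes vertex $i$ and all incident arrows, and $W'$ deletes all cycles through $i$. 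The remaining task is purely combinatorial: I must identify $(Q',W')$ with the quiver with potential $(Q_{\tri'},W_{\tri'})$ of a triangulation $\tri'$ of the cut surface $(S,M)/\al$. The natural candidate is $\tri'=\pi_\al(\tri\setminus\{\al\})$, the image of the remaining arcs under the bijection of arcs of $(S,M)/\al$ with $\al$-noncrossing arcs of $(S,M)$.

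\textbf{The main obstacle} will be verifying this last combinatorial identification, namely that deleting vertex $i$ (the arc $\al$) from $(Q_\tri,W_\tri)$ reproduces exactly the quiver with potential of the triangulation $\pi_\al(\tri\setminus\{\al\})$ on $(S,M)/\al$. The subtlety is that in $(S,M)$ the arc $\al$ sits in two triangles (or one, if $\al$ borders a single triangle on both sides), contributing arrows into and out of $i$ and participating in up to two $3$-cycles of $W_\tri$; cutting along $\al$ separates these triangles, turning the two copies of $\al$ into boundary segments of the cut surface. I would argue locally at each of the (at most two) triangles incident to $\al$: each such triangle $\al\beta\gamma$ contributes a $3$-cycle to $W_\tri$ and arrows through $i$; after cutting, $\al$ becomes a boundary arc of the cut surface and these triangles survive unchanged, so no $3$-cycle and no arrow of $Q_{\tri'}$ involves $\al$. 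This matches precisely the deletion of vertex $i$ and of all cycles through $i$ in the Keller construction, and the arrows/cycles not touching $\al$ are manifestly identical on both sides. Care is needed when $\al$ is a loop, or when cutting creates a spurious triangle component (to be discarded, as noted in the text), and I would treat these as explicit small cases. Finally, assembling the triangle equivalences $\cat_{(S,M)/\al}\simeq\cat_{(Q_{\tri'},W_{\tri'})}=\cat_{(Q',W')}\simeq\bigl(\cat_{(Q,W)}\bigr)_{\overline{P_i}}\simeq\cat_{X_\al}$ completes the base case.
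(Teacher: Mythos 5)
Your proposal is correct and rests on exactly the same two pillars as the paper's proof: completing to a triangulation, invoking Keller's reduction theorem \cite[7.4]{KDeformedCY} for the Ginzburg dg algebra, and identifying the vertex-deleted quiver with potential with that of a triangulation of the cut surface. The only structural difference is your induction on the number of arcs: the paper instead reduces at all of $R$ simultaneously (deleting the whole set of vertices corresponding to $\arcs$ in one step) and disposes of the combinatorial identification by citing Remark~\ref{remark: gluing}, which describes $(S,M)/\arcs$ as the same collection of triangles with the gluings along $\arcs$ omitted. Your one-arc-at-a-time route is workable but buys nothing and costs something: the inductive step needs $\pi_{X_\al}(\overline{R})$ to correspond, under the base-case equivalence, to the rigid object attached to $\pi_\al(\overline{\arcs})$ via the Br\"ustle--Zhang bijection on the cut surface. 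You correctly flag this, but it is a genuine point that must be checked (it is essentially the content of the paper's later Lemma~\ref{lemma: reductioncorrespondence}); it can be handled by choosing the completing triangulation $\tri$ to contain all of $\arcs$ from the outset, so that Keller's equivalence matches up the canonical cluster tilting objects and hence the summands of $\overline{R}$ with the arcs of $\pi_\al(\tri\setminus\{\al\})$. The all-at-once argument of the paper avoids this bookkeeping entirely.
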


\begin{proof}
Complete the collection of arcs $\arcs$ to a triangulation
$\tri$. Let $(Q,W)$ be the QP associated with $\tri$.
By definition, there is an equivalence of triangulated categories
$\cat_{(S,M)} \simeq \cat_{(Q,W)}$.
By~\cite[Theorem 7.4]{KDeformedCY} (see section~\ref{ssection: IY reductions}),
the category $\cat_R$ is triangle equivalent to
the cluster category $\cat_{(Q',W')}$, where
$(Q',W')$ is obtained from $(Q,W)$ by deleting
the vertices of $Q$ which correspond to arcs in $\arcs$,
and all adjacent arrows.
On the other hand, the arcs in $\tri$ not in $\arcs$ induce a triangulation
of the surface $(S,M)/\arcs$. It follows from
Remark~\ref{remark: gluing} that $(Q',W')$
is the QP associated with this triangulation.
Thus $\cat_{(S,M)/\arcs}$ is equivalent to $\cat_{(Q',W')}$.
\end{proof}

\vspace{.2cm}
\noindent {\bf Remark}:
Lemma~\ref{lemma: reductioncorrespondence} shows that the equivalence above
is well-behaved with respect to well-chosen bijections between arcs and
exceptional objects.
\vspace{.2cm}

Figure~\ref{figure:qpexample1} shows the effect of cutting along an arc
in a triangulation of a torus with a single boundary component containing two marked
points. We cut along the red arc (numbered $3$) and obtain a cylinder with four
marked points as shown, with triangulation given by the remaining arcs.
In the last step, the cylinder has been rotated around to get a simpler picture.
The effect on the corresponding quiver with potential is shown in Figure~\ref{figure:qpexample2}.

\begin{figure}
\begin{center}
\psfragscanon
\psfrag{1}{$\scriptstyle 1$}
\psfrag{2}{$\scriptstyle 2$}
\psfrag{3}{$\scriptstyle 3$}
\psfrag{4}{$\scriptstyle 4$}
\psfrag{5}{$\scriptstyle 5$}
\includegraphics[scale=0.5]{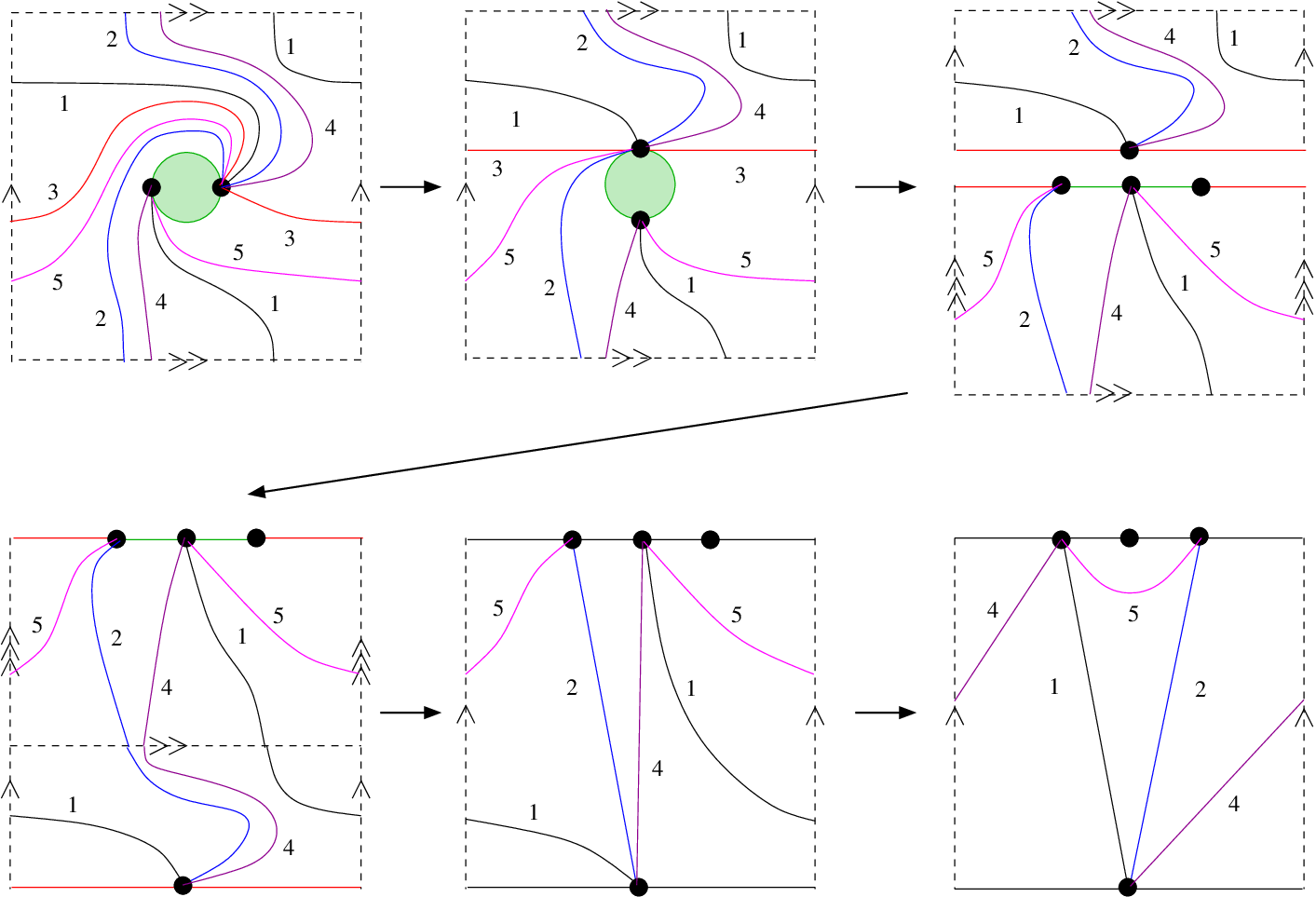} %magnification in xfig=100%
\end{center}
\caption{Cutting along an arc, numbered $3$, in a torus to get a cylinder:
triangulation case.}
\label{figure:qpexample1}
\end{figure}

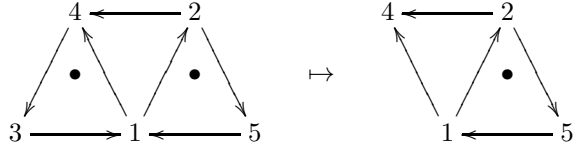
\begin{figure}
$$
\xymatrix@=0.4cm{
& 4 \ar[ddl] && 2\ar[ll] \ar[ddr] &&& 4 && 2 \ar[ddr] \ar[ll] & \\
& \bullet && \bullet && \mapsto &&& \bullet & \\
3 \ar[rr] && 1 \ar[uul] \ar[uur] && 5 \ar[ll] &&& 1 \ar[uul] \ar[uur] && 5 \ar[ll]
}
$$
\caption{The change in the quiver with potential from the cut in Figure~\ref{figure:qpexample1}. The potential in each case is given by the sum of
the $3$-cycles containing black dots.}
\label{figure:qpexample2}
\end{figure}

\begin{prop}
Let $(S,M)$ be a marked surface and $\arcs$ a partial triangulation of $(S,M)$.
Let $\arcs'$ be a collection of arcs containing $\arcs$. Then the coloured quiver
associated to $\pi_{\arcs}(\arcs'\setminus \arcs)$ in $(S,M)/\arcs$ coincides with the coloured quiver associated to $\arcs'$ in $(S,M)$ with the vertices corresponding to $\arcs$ and all arrows incident with them removed.
\end{prop}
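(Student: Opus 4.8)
The plan is to reduce the statement to the purely combinatorial definition of the coloured quiver of a partial triangulation (Section~\ref{ssection: coloured quivers}) and show that the local data used to compute arrows --- the arcs $\al_s$ and $\al_t$ following a given arc $\al$, and the \twist\ operation $\comp$ --- are preserved under the bijection $\pi_\arcs$ between $A^0_\arcs(S,M)$ and $A^0((S,M)/\arcs)$. Recall that for an arc $\tau\in\arcs'\setminus\arcs$ the $c$-coloured arrows from $\tau$ to $\tau'$ have cardinality $q^{(c)}_{\arcs'_\tau}(\tau,\tau')=q_{\arcs'_\tau}(\comp^c_{\arcs'_\tau}(\tau),\tau')$, where $\arcs'_\tau=\arcs'\setminus\ens{\tau}$. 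The key point is that these numbers depend only on a small neighbourhood of each endpoint of $\tau$, namely on which arcs of $\arcs'_\tau$ immediately follow $\tau$ in the boundary ordering. Since cutting along $\arcs$ never cuts along any arc of $\arcs'\setminus\arcs$, the cyclic ordering of arcs at each endpoint is unchanged by $\pi_\arcs$ except possibly for the insertion of new boundary segments where arcs of $\arcs$ have been cut.

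First I would verify the \textbf{compatibility of the twist with cutting}. By Remark~\ref{remark: gluing}, cutting along an arc $\gamma\in\arcs$ separates the two sides of $\gamma$ but leaves all other arcs and their endpoints untouched; an arc that previously followed $\tau$ because it sat next to $\gamma$ in the boundary ordering still follows $\tau$, unless that following arc was $\gamma$ itself. I claim that for $\tau\in A^0_\arcs(S,M)$ the arcs $\pi_\arcs(\tau)_s$ and $\pi_\arcs(\tau)_t$ following $\pi_\arcs(\tau)$ in $(S,M)/\arcs$ are exactly $\pi_\arcs$ applied to the arcs following $\tau$ in $\arcs'_\tau$, with the caveat that the relevant following arc must itself lie in $\arcs'_\tau$ (not in $\arcs$) in order to contribute to the coloured quiver after deletion. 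Consequently $\comp_{\pi_\arcs(\arcs'\setminus\arcs)}\bigl(\pi_\arcs(\tau)\bigr)=\pi_\arcs\bigl(\comp_{\arcs'_\tau}(\tau)\bigr)$, so the twist commutes with $\pi_\arcs$. This identity propagates to all iterates $\comp^c$ by induction.

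Granting this, I would conclude as follows. For vertices $i,j$ both corresponding to arcs $\gamma_i,\gamma_j\in\arcs'\setminus\arcs$, the number of $c$-coloured arrows $i\to j$ in the cut surface is
$$
q^{(c)}_{\pi_\arcs(\arcs'_{\gamma_i}\setminus\arcs)}\bigl(\pi_\arcs(\gamma_i),\pi_\arcs(\gamma_j)\bigr)
= q_{\arcs'_{\gamma_i}}\bigl(\comp^c_{\arcs'_{\gamma_i}}(\gamma_i),\gamma_j\bigr)
= q^{(c)}_{\arcs'_{\gamma_i}}(\gamma_i,\gamma_j),
$$
where the first equality uses the twist-compatibility and the fact that $q_\arcs(\al,\beta)$ depends only on whether $\beta\in\ens{\al_s,\al_t}$, which $\pi_\arcs$ preserves. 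This is exactly the number of $c$-coloured arrows $i\to j$ in the coloured quiver of $\arcs'$ in $(S,M)$, and these are precisely the arrows surviving the deletion of the vertices corresponding to $\arcs$. Hence the two coloured quivers coincide.

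\textbf{The main obstacle} I expect is the endpoint bookkeeping in the twist-compatibility claim: when an arc $\gamma\in\arcs$ that was following $\tau$ is cut, one must check carefully that the \emph{new} following arc in $\arcs'_\tau$ (skipping over the now-boundary segment created from $\gamma$) is correctly identified, and that the orientations $\underline{\al_s},\underline{\al_t}$ prescribed in Figure~\ref{figure: twist of an arc} transport correctly under $\pi_\arcs$. This requires the same kind of case-by-case local analysis --- depending on whether $\tau$, its following arcs, or the cut arcs are loops, and on the order in which they appear at a shared endpoint --- as was used to verify that $\comp_\arcs(\al)\in A^0_\arcs(S,M)$ in Section~\ref{subsection: mutating arcs}. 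Since the deleted vertices correspond to arcs of $\arcs$, and $q$ only counts contributions from $\arcs'_\tau$, those cut arcs contribute no arrows after deletion, so this bookkeeping affects only the correct identification of following arcs and not the final arrow count between surviving vertices.
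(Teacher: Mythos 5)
Your overall strategy coincides with the paper's: reduce everything to the combinatorial definitions and show that the following arcs, and hence the \twist\ $\comp$, commute with the bijection $\pi_{\arcs}$. However, there is a genuine error at the one point where the proposition actually has content, namely the case where the arc following $\tau$ at an endpoint is an arc $\gamma\in\arcs$ that gets cut. You propose to identify ``the new following arc in $\arcs'_\tau$ (skipping over the now-boundary segment created from $\gamma$)''. This is not what the definition prescribes, and it would produce the wrong arc. In the definition of $\al_s$ and $\al_t$ (Section~\ref{subsection: mutating arcs}), the following arc is explicitly allowed to be a boundary arc, and the \twist\ composes with it regardless of whether it is a boundary arc or an arc of the partial triangulation. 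After cutting along $\gamma$, the endpoint of $\tau$ splits, and the copy of $\gamma$ lying in the same sector as $\tau$ becomes a boundary segment at the new marked point, immediately following $\pi_{\arcs}(\tau)$ in the ordering; it is itself the following arc, and there is nothing to skip to, since the arcs that followed $\gamma$ at the old endpoint now sit at the \emph{other} new marked point. Composing with this boundary copy of $\gamma$ yields exactly $\pi_{\arcs}$ of the composition with $\gamma$ in the original surface, which is why the \twist\ commutes with cutting. (Relatedly, your claim that the following arcs of $\pi_{\arcs}(\tau)$ are ``$\pi_{\arcs}$ applied to'' the following arcs of $\tau$ cannot be literally correct, since $\pi_{\arcs}$ is not defined on arcs of $\arcs$ or on boundary arcs.)

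The observation you are missing --- and which is essentially the entire content of the paper's proof --- is that the definition of the \twist\ makes no distinction between arcs of the partial triangulation and boundary arcs. Since cutting along $\arcs$ converts the arcs of $\arcs$ into boundary arcs while leaving the local configuration at every endpoint of every arc of $\arcs'\setminus\arcs$ otherwise unchanged, the data $(\al_s,\al_t)$, hence $\comp$, hence the counts $q^{(c)}_{\arcs'_{\gamma_i}}(\gamma_i,\gamma_j)$ for $\gamma_i,\gamma_j\in\arcs'\setminus\arcs$, are literally the same before and after the cut. With that observation substituted for your ``skipping'' step, the rest of your argument (the displayed chain of equalities and the induction on $c$) goes through.
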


\begin{proof}
It is clear that the vertices of each coloured quiver correspond to the arcs in
$\arcs'\setminus \arcs$. In the definition of the \twist\ $\kappa_{\arcs}$
(see Section~\ref{subsection: mutating arcs}), no distinction is made between
arcs in $\arcs$ and boundary arcs. Then, looking at the definition of the coloured quiver of a partial triangulation (see Section~\ref{ssection: coloured quivers}) we see that the arrows between arcs in $\arcs'\setminus \arcs$ are the same when
considered in either coloured quiver. The result follows.
\end{proof}

We now give an example.
In Figure~\ref{figure:cuttingexample1}, we start with a partial triangulation
of a torus with a single boundary component with two marked points. This has been
obtained by removing arcs $4$ and $5$ from the triangulation considered in
Figure~\ref{figure:qpexample1}. As before, we cut along the red arc (numbered $3$)
and obtain a cylinder with four marked points as shown, with a partial triangulation given by the remaining arcs. Figure~\ref{figure:cuttingexample2} gives the
corresponding coloured quiver associated to the partial triangulation in Figure~\ref{figure:cuttingexample1}, together with the new quiver obtained after cutting along the red arc (numbered $3$), i.e.\ with vertex $3$ and all arrows
incident with it removed.

\begin{figure}
\begin{center}
\psfragscanon
\psfrag{1}{$\scriptstyle 1$}
\psfrag{2}{\blue{$\scriptstyle 2$}}
\psfrag{3}{\red{$\scriptstyle 3$}}
\includegraphics[scale=0.5]{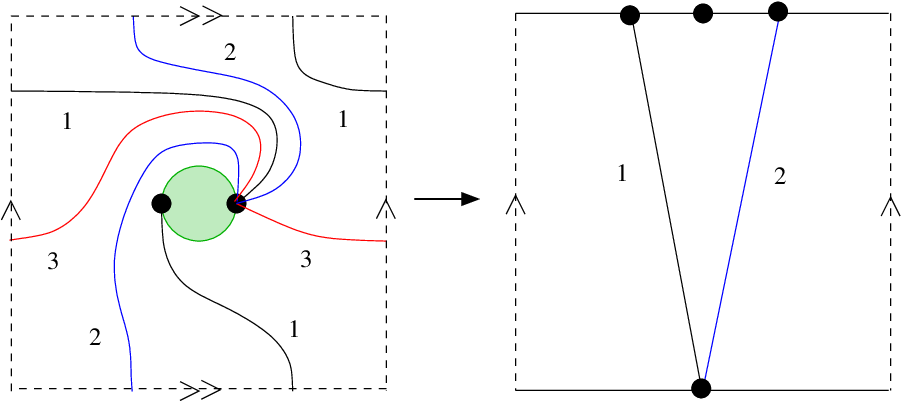} %magnification in xfig=100%
\end{center}
\caption{Cutting along an arc in a torus to get a cylinder: partial triangulation case.}
\label{figure:cuttingexample1}
\end{figure}

\begin{figure}
\begin{center}
\psfragscanon
\psfrag{(0,0)}{$\scriptstyle (0,0)$}
\psfrag{(0,1)}{$\scriptstyle (0,1)$}
\psfrag{(1,2)}{$\scriptstyle (1,2)$}
\psfrag{(2,2)}{$\scriptstyle (2,2)$}
\psfrag{3}{$\scriptstyle 3$}
\psfrag{B1}{\pscirclebox{$1$}} %1 in a circle
\psfrag{B2}{\pscirclebox{\blue{$2$}}} %2 in a circle
\psfrag{B3}{\pscirclebox{\red{$3$}}} %3 in a circle
\includegraphics[scale=0.4]{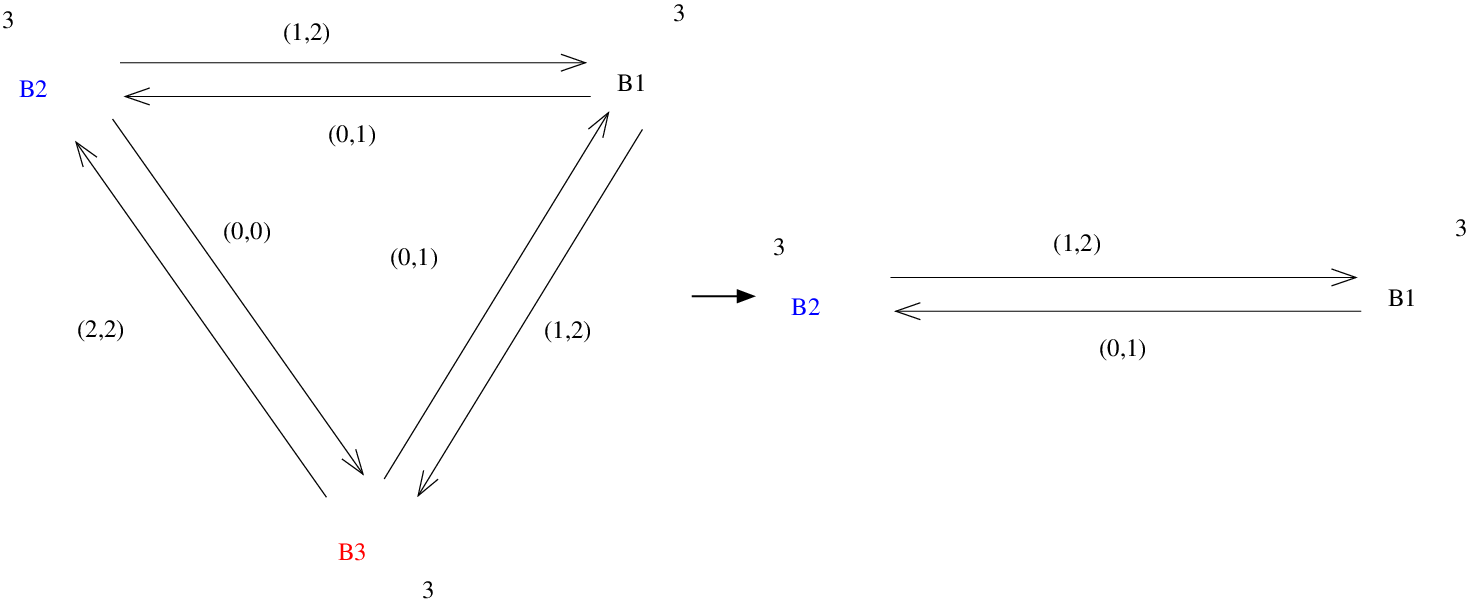} %magnification in xfig=100%
\end{center}
\caption{The effect of cutting along arc $3$ on the coloured quiver of the
partial triangulation in Figure~\ref{figure:cuttingexample1}.
The vertices themselves are encircled, with the vertex labels written outside
the circle.}
\label{figure:cuttingexample2}
\end{figure}

%%%%%%%%%%%%%%%%%%%%%%%%%%%%%%%%%%%%%%%%%%%%%%%%%%%%%%%%%%%%%%%%%%%%%%%%%%%%%%%%%%%%%%%%%%%%%%%%%%%%%
\section{Compatibility}
%%%%%%%%%%%%%%%%%%%%%%%%%%%%%%%%%%%%%%%%%%%%%%%%%%%%%%%%%%%%%%%%%%%%%%%%%%%%%%%%%%%%%%%%%%%%%%%%%%%%%

%%%%%%%%%%%%%%%%%%%%%%%%%%%%%%%%%%%%%%%%%%%%%%%%%%%%%%%%
\subsection{Compatibility of the mutations}
%%%%%%%%%%%%%%%%%%%%%%%%%%%%%%%%%%%%%%%%%%%%%%%%%%%%%%%%

Let $\arcs=\{\gamma_1,\ldots ,\gamma_m\}$ be a partial triangulation of $(S,M)$.
Complete $\arcs$ to a triangulation $\tri$ of $(S,M)$,
and let $T$ be the associated cluster tilting object in
$\cat=\cat_{(S,M)}$. Let $R$ be the direct summand of $T$
corresponding to $\arcs$.
We thus obtain a map $\al \mapsto X_\al$ between the arcs of
$(S,M)$ and the isomorphism classes of exceptional objects
in $\cat_{(S,M)}$ (see section~\ref{ssection: cluster surfaces}).
We denote by $\pi_{\arcs}$ the bijection
$A^0_\arcs(S,M) \fl A^0\big( (S,M)/\arcs \big)$;
recall also that $\pi_R$ denotes the functor $^\perp(\shift R) \fl \cat_R$.
Consider the partial triangulation $\pi_{\arcs}(\tri\setminus\arcs)$
of $(S,M)/\arcs$.
Note that $T':=\pi_R(T)$ is cluster tilting in $\cat_{(S,M)/\arcs}\simeq \cat_R$
by~\cite[Theorem 4.9]{IY}.
This cluster tilting object induces a bijection
$\beta \mapsto Y_\beta$ between the
arcs in $(S,M)/\arcs$ and the exceptional objects in $\cat_R$.

\begin{lemma} \label{lemma: reductioncorrespondence}
Let $\al$ be an arc in $A^0_\arcs(S,M)$.
Then the image of $X_\al$ under $\pi_R$
is isomorphic to $Y_{\pi_{\arcs}\al}$.
\end{lemma}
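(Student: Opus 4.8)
The plan is to reduce the statement to an isomorphism of modules and then invoke Iyama--Yoshino together with the combinatorial description of the Br\"ustle--Zhang bijection. Write $T = R \oplus U$, where $U$ is the summand corresponding to $\tri \setminus \arcs$; since every object of $\add R$ is killed by $\pi_R$, we have $T' = \pi_R(T) = \pi_R(U)$, and $U$ corresponds to the triangulation $\pi_\arcs(\tri\setminus\arcs)$ of $(S,M)/\arcs$. Recall that $Y_\beta$ is, by construction, the unique exceptional object of $\cat_R$ with $\cat_R(T', \shift_R Y_\beta) \cong I'(\beta)$, where $I'(\beta)$ denotes the module associated in~\cite{ABCP} to $\beta$ relative to $\pi_\arcs(\tri\setminus\arcs)$ (with $Y_\beta$ a summand of $T'$ when $\beta$ lies in this triangulation). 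First I would check that $\pi_R X_\al$ is a legitimate nonzero exceptional object of $\cat_R$: since $\al$ does not cross $\arcs$, the Br\"ustle--Zhang theorem gives $\ext^1_\cat(X_\al, R) = 0$, so $X_\al \in {}^\perp(\shift R)$, while $\al \notin \arcs$ forces $X_\al \notin \add R$. Because $\beta \mapsto Y_\beta$ is a bijection, it then suffices to prove the isomorphism of $\End_{\cat_R}(T')$-modules
$$\cat_R(T', \shift_R \pi_R X_\al) \cong I'(\pi_\arcs\al).$$

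The second step computes the left-hand side via Iyama--Yoshino. Exactly as in the proof of Lemma~\ref{lemma: reduction}, \cite[Lemma 4.8]{IY} yields, for objects $A,B \in {}^\perp(\shift R)$, a natural isomorphism $\cat_R(\pi_R A, \shift_R \pi_R B) \cong \ext^1_\cat(A,B)$. Applying this with $A = U$ and $B = X_\al$ gives
$$\cat_R(T', \shift_R \pi_R X_\al) \cong \ext^1_\cat(U, X_\al) = \cat(U, \shift X_\al).$$
I would then verify this is an isomorphism of $\End_{\cat_R}(T')$-modules: the isomorphism is natural in the first variable, and $\End_{\cat_R}(T')$ is the quotient of $\End_\cat(U)$ by the ideal of maps factoring through $\add R$, so one only needs that such maps act as zero on $\cat(U, \shift X_\al)$; this holds because any such action factors through $\ext^1_\cat(R, X_\al) = 0$.

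The third step identifies $\cat(U, \shift X_\al)$ with the module of $\pi_\arcs\al$. By definition $\cat(T, \shift X_\al) \cong I(\al)$ is the module of $\al$ relative to $\tri$, and $\cat(U, \shift X_\al)$ is the part of it supported on the idempotent of $\End_\cat(T)$ picking out $U$, i.e.\ the vertices of $\tri\setminus\arcs$. Since $\al$ crosses no arc of $\arcs$, the module $I(\al)$ is already supported on these vertices, so nothing is lost. The module $I(\al)$ is the string module read off from the sequence of arcs of the triangulation crossed by $\al$; by Remark~\ref{remark: gluing}, cutting along $\arcs$ does not alter crossings with arcs outside $\arcs$, so $\pi_\arcs\al$ crosses precisely the images under $\pi_\arcs$ of the arcs of $\tri\setminus\arcs$ crossed by $\al$, in the same order. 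Hence the two string modules coincide and $\cat(U, \shift X_\al) \cong I'(\pi_\arcs\al)$. (When $\al$ itself lies in $\tri\setminus\arcs$ both sides vanish, $\pi_R X_\al$ and $Y_{\pi_\arcs\al}$ being the corresponding summands of $T'$, so the claim is immediate in that case.)

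The main obstacle I anticipate is making this last identification precise \emph{as modules} over the correct algebra, since the potential---and hence the relations---change under cutting. This I would resolve using Keller's description (\cite[Theorem 7.4]{KDeformedCY}, see Section~\ref{ssection: IY reductions}): passing from $(Q,W)$ to $(Q',W')$ retains exactly the arrows of $Q$ between the surviving vertices and deletes only cycles of $W$ through the removed vertices. Relations coming from such deleted cycles act by zero on any module supported on the surviving vertices, so the action of $J(Q,W)$ on $\cat(U, \shift X_\al)$ factors through $J(Q',W') \cong \End_{\cat_R}(T')$ and agrees with the action defining $I'(\pi_\arcs\al)$. Composing the three isomorphisms then gives $\pi_R X_\al \cong Y_{\pi_\arcs\al}$.
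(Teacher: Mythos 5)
Your argument is correct and follows the same strategy as the paper's proof, namely reducing the statement to an isomorphism of the associated $\End_{\cat_R}(T')$-modules; the only difference is that the paper disposes of that module comparison in one line by citing \cite[Prop.\ 3.5]{JP}, whereas you establish it directly from \cite[Lemma 4.8]{IY}, the string-module description of \cite{ABCP}, and Keller's reduction theorem \cite[Thm.\ 7.4]{KDeformedCY}. Your version is a valid, self-contained unpacking of the cited result, including the necessary checks (support of $I(\al)$ away from $\arcs$, compatibility of the module structures, and the separate treatment of $\al\in\tri\setminus\arcs$).
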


\begin{proof}
Using~\cite[Proposition 3.5]{JP}, the modules
associated with $\pi_R X_\al$ and
$Y_{\pi_{\arcs}\al}$ are seen to be isomorphic.
\end{proof}

Let $\al$ be an arc in $(S,M)$ which is not in $\arcs$ and
which does not cross $\arcs$, i.e. $\al\in A^0_{\arcs}(S,M)$.
Fix an orientation $\underline{\al}$ of $\al$ and let
$\al_s$ and $\al_t$ be the two (possibly boundary) arcs
following $\al$ in $\arcs$
(as defined in section~\ref{subsection: mutating arcs}).
Recall that $\comp_{\arcs}(\al)$ is defined to be
$[\underline{\al_t} \underline{\al} \underline{\al_s}^{-1}]$.

If $\gamma$ is any arc in $(S,M)$ then recall we have (from~\cite{BZ};
see Section~\ref{ssection: cluster surfaces}):
\begin{equation}
\label{equation: shift description}
\shift X_{\gamma}=X_{\comp_{\phi}(\gamma)},
\end{equation}
where $\phi$ denotes the empty set of arcs in $(S,M)$.
Thus $\comp_{\phi}(\al)$ is obtained from the arc $\al$ by composition with
the two boundary arcs which follow $\al$ (see
Section~\ref{subsection: mutating arcs}).

The following corollary describes the \twist\ of an arc
in terms of the action of the shift functor of an Iyama-Yoshino reduction.

\begin{cor}\label{corollary: mutation}
Under the bijection $A^0_\arcs(S,M) \leftrightarrow
A^0\big((S,M)/\arcs\big)$,
the induced action of the shift functor of $\cat_{(S,M)/\arcs}$
on $A^0_\arcs(S,M)$
coincides with that of the \twist\ $\comp_\arcs$.
In other words, we have a commutative diagram:
$$
\xymatrix{
A^0\big((S,M)/\arcs\big)
 \dr^{\text{shift}}                     &
A^0\big((S,M)/\arcs\big)      \\
A^0_\arcs(S,M) \ar[u]_{\pi_{\arcs}}  \dr^{\comp_\arcs}            &
A^0_\arcs(S,M). \ar[u]_{\pi_{\arcs}}
}
$$
\end{cor}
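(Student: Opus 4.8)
The plan is to reduce the statement to a combination of the two previously-established facts: Lemma~\ref{lemma: reductioncorrespondence}, which identifies $\pi_R(X_\al)$ with $Y_{\pi_\arcs\al}$, and equation~\eqref{equation: shift description} describing the shift functor geometrically as a twist with respect to the empty set of arcs. The key observation is that the shift functor $\shift_R$ of the Iyama--Yoshino reduction $\cat_R \simeq \cat_{(S,M)/\arcs}$ is, by the theorem of Br\"ustle--Zhang applied \emph{to the cut surface} $(S,M)/\arcs$, described geometrically as the twist $\comp_\phi$ taken in $(S,M)/\arcs$, where the boundary arcs of the cut surface are the relevant following arcs. So the right-hand vertical-then-top path of the diagram sends $\al$ to $Y_{\pi_\arcs\al}$, then applies $\shift_R$, landing at the arc $\comp_\phi^{(S,M)/\arcs}(\pi_\arcs\al)$ under the bijection $\beta\mapsto Y_\beta$.

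First I would make precise that the exchange triangles defining $\comp_\arcs$ in $\cat$ restrict, via Lemma~\ref{lemma: reduction} and its Corollary~\ref{corollary: reduction}, to the exchange triangles computing $\comp$ in $\cat_R$; concretely, the twist $\comp_\arcs(X_\al)$ in $\cat$ maps under $\pi_R$ to $\shift_R \pi_R(X_\al)$, since the exchange triangle $(\ast)$ with respect to $R$ becomes the triangle $X^{(c)}\to 0\to \shift_R X^{(c)}$ in $\cat_R$ (as recorded in the discussion of exchange triangles in Section~\ref{subsection: coloured quivers for rigid objects}). Combined with Lemma~\ref{lemma: reductioncorrespondence}, this gives $\pi_R(\comp_\arcs X_\al) \simeq \shift_R Y_{\pi_\arcs\al}$. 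Next I would run the purely geometric argument on $(S,M)/\arcs$: the arcs of $\arcs$ have become boundary arcs after cutting (by the description of cutting in Section~4), so the arcs $\al_s,\al_t$ following $\al$ in $\arcs$ correspond precisely to the boundary arcs of $(S,M)/\arcs$ following $\pi_\arcs\al$. Hence the twist $\comp_\arcs(\al)$ in $(S,M)$ corresponds under $\pi_\arcs$ to $\comp_\phi(\pi_\arcs\al)$ computed in the cut surface, which by~\eqref{equation: shift description} applied to $(S,M)/\arcs$ is the arc underlying $\shift Y_{\pi_\arcs\al}$.

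Assembling the two computations gives $\pi_\arcs(\comp_\arcs\al) = \comp_\phi^{(S,M)/\arcs}(\pi_\arcs\al)$, which is exactly the image of $\pi_\arcs\al$ under the shift functor of $\cat_{(S,M)/\arcs}$ under the bijection $\beta\mapsto Y_\beta$; this is the commutativity of the square. The main obstacle I anticipate is the bookkeeping at the endpoints: one must verify that cutting along the arcs of $\arcs$ turns each $\al_s$ and $\al_t$ into exactly the boundary arc that $\comp_\phi$ uses on the cut surface, keeping careful track of the clockwise orientation and of the case where $\al_s=\al_t$ or where a marked point is split into two or three copies (as flagged in the cutting construction). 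Once this local identification of following arcs is confirmed, the equality of twists on the cut surface with the shift follows formally from~\eqref{equation: shift description}, and the categorical and geometric halves match by Lemma~\ref{lemma: reductioncorrespondence}.
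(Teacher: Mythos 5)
Your proposal is correct and takes essentially the same route as the paper: the paper's proof is exactly your second step, namely applying the Br\"ustle--Zhang description of the shift (equation~\eqref{equation: shift description}) to the cut surface $(S,M)/\arcs$ and noting that the arcs of $\arcs$ have become boundary arcs there, so that the twist $\comp_{\phi}$ on $(S,M)/\arcs$ agrees with $\comp_\arcs$ under $\pi_{\arcs}$. The categorical preamble in your first paragraph (identifying $\pi_R(\comp_R X_\al)$ with $\shift_R\pi_R(X_\al)$ via the exchange triangles) is not needed for this purely arc-level statement; it is really part of the proof of Proposition~\ref{proposition: mutations coincide}, which is where the corollary gets used.
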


\begin{proof}
Let $\al\in A^0_{\arcs}(S,M)$. By~\eqref{equation: shift description},
noting that $\arcs$ becomes part of the boundary of $(S,M)/\arcs$,
we have
$\shift_R Y_{\pi_{\arcs}(\al)} \simeq Y_{\pi_{\arcs}\comp_{\arcs}(\al)}$.
The result follows.
\end{proof}

We now have the ingredients we need in order to show that the two mutations
(of partial triangulations and rigid objects) are compatible.

\begin{prop}\label{proposition: mutations coincide}
Let $R=R_1\oplus \cdots \oplus R_m$ be the rigid object in $\cat_{(S,M)}$
associated with the partial triangulation $\arcs$ as above.
Fix $1\leq k\leq m$. Then we have the indecomposable summand $R_k$ of $R$
and corresponding arc $\gamma_k$ of $\arcs$.
Let $\al$ be an arc in $A^0_{\arcs}(S,M)$. Then we have:
$$
\comp_R X_\al \simeq X_{\comp_\arcs(\al)}
\text{ and }
\mut_k R \simeq X_{\mut_k \arcs}.
$$
Hence, in particular, $d_k(R)=d_k(\arcs)$.
\end{prop}

\begin{proof}
Since $\al$ does not cross $\arcs$, it follows from~\cite{BZ}
(see Section~\ref{ssection: cluster surfaces}) that
$X_{\al}\in {}^{\perp}(\shift R)$. Similarly, $X_{\comp_{\arcs}(\al)}\in
{}^{\perp}(\shift R)$, since $\comp_{\arcs}(\al)$ does not cross $\arcs$.
By the description of the shift $\shift_R$ of $\cat_R$
in~\cite[4.1]{IY}, $\pi_R(\comp_R(X_{\al}))\simeq \shift_R(\pi_R X_{\al})$ in
$\cat_R$. By Lemma~\ref{lemma: reductioncorrespondence}, $\shift_R(\pi_R X_{\al})
\simeq \shift_R Y_{\pi_{\arcs}(\al)}$.
By Corollary~\ref{corollary: mutation}, we have $\shift_R Y_{\pi_{\arcs}(\al)}\simeq Y_{\pi_{\arcs}\comp_{\arcs}(\al)}$.
By Lemma~\ref{lemma: reductioncorrespondence},
$Y_{\pi_{\arcs}\comp_{\arcs}(\al)}\simeq \pi_R X_{\comp_\arcs}(\al)$. Hence
$\pi_R(\comp_R X_{\al})\simeq \pi_R(X_{\comp_{\arcs}(\al)})$.

Note that $\comp_R X_{\al}$ is an indecomposable object in $^{\perp}(\shift R)$
which is not in $\add R$ (see
Section~\ref{subsection: coloured quivers for rigid objects}).
Since $\comp_{\arcs}(\al)$ does not cross $\arcs$ and does not lie in $\arcs$,
the same is true of $X_{\comp_{\arcs}(\al)}$.
It follows that $\comp_R X_{\al}\simeq X_{\comp_{\arcs}(\al)}$,
proving the first part of the Proposition. The second and third statements
follow.
\end{proof}

%%%%%%%%%%%%%%%%%%%%%%%%%%%%%%%%%%%%%%%%%%%%%%%%%%%%%%%%
\subsection{Compatibility of the coloured quivers}
%%%%%%%%%%%%%%%%%%%%%%%%%%%%%%%%%%%%%%%%%%%%%%%%%%%%%%%%

As in the previous section, let $\al\in A^0_{\arcs}(S,M)$;
we fix an orientation of $\al$ and let $\al_s$ and $\al_t$
be the two (possibly boundary) arcs following $\al$ in $\arcs$
(as defined in section~\ref{subsection: mutating arcs}). Note
that it is possible that $\al_s=\al_t$.
We choose a triangulation $\tri$ of $(S,M)$ containing $\arcs$ and
$\alpha$. Let $T$ be the corresponding cluster tilting object,
containing $R$ as a direct summand and $X_{\alpha}$ as an
indecomposable direct summand.
Recall that $X_\gamma = 0$ if $\gamma$ is a boundary arc.

\begin{lemma}\label{lemma: approx}
 There is a minimal left $\add R$-approximation of $X_\al$ in $\cat_{(S,M)}$
 of the form $$X_\al \gfl X_{\al_s} \oplus X_{\al_t}.$$
\end{lemma}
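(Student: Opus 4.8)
The plan is to produce the approximation by exploiting the compatibility already established between the twist $\comp_\arcs$ and the shift $\shift_R$ of the Iyama--Yoshino reduction $\cat_R$. First I would observe that by Proposition~\ref{proposition: mutations coincide} we have $\comp_R X_\al \simeq X_{\comp_\arcs(\al)}$, where $\comp_\arcs(\al) = [\underline{\al_t}\,\underline{\al}\,\underline{\al_s}^{-1}]$. Thus $X_{\comp_\arcs(\al)} = \comp_R X_\al = X_\al^{(1)}$ is the top of the exchange triangle
$$
X_\al \stackrel{f}{\gfl} B \gfl X_{\comp_\arcs(\al)} \gfl \shift X_\al,
$$
in which $f$ is a minimal left $\add R$-approximation. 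The content of the lemma is therefore the identification of the middle term $B$ as $X_{\al_s}\oplus X_{\al_t}$, so the geometric task is to read off which indecomposable summands of $R$ appear, and with what multiplicity, in this minimal approximation.

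The key step is to compute the middle term geometrically. The twist $\comp_\arcs(\al)$ is obtained from $\al$ by composing with $\al_s$ and $\al_t$, and the exchange triangle relating $\al$ to its twist passes through exactly the arcs $\al_s$ and $\al_t$ following $\al$ in $\arcs$. I would make this precise by using Lemma~\ref{lemma: reductioncorrespondence} and Corollary~\ref{corollary: mutation} to transport the whole computation into $\cat_R \simeq \cat_{(S,M)/\arcs}$, where the twist becomes the ordinary shift and the arcs $\al_s,\al_t$ become boundary arcs of the cut surface. In the cut surface, by~\eqref{equation: shift description} the shift moves each endpoint of $\al$ clockwise to the next marked point; since $\arcs$ (and the original boundary) together bound the neighbourhoods used to define $\al_s$ and $\al_t$, the ``corners'' turned at the two endpoints are precisely $\al_s$ and $\al_t$. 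This yields a middle term supported on $X_{\al_s}$ and $X_{\al_t}$, with the multiplicity of each summand being $1$ unless $\al_s=\al_t$, matching the combinatorial numbers $q_\arcs(\al,\beta)$ defined in Section~\ref{ssection: coloured quivers}.

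The main obstacle I expect is the careful bookkeeping of boundary arcs and the case $\al_s = \al_t$. When $\al_s$ or $\al_t$ is a boundary arc we have $X_{\al_s}=0$ or $X_{\al_t}=0$, so the approximation degenerates; I must check that the triangle description remains valid and that minimality is preserved. When $\al_s = \al_t$ (the arc following $\al$ is the same at both ends), the middle term should be $X_{\al_s}\oplus X_{\al_t} = X_{\al_s}^{\,2}$, consistent with the value $q_\arcs(\al,\al_s)=2$; verifying the correct multiplicity here, rather than a single copy, is the delicate point and is exactly where the local picture of Figure~\ref{figure: twist of an arc} must be used. Minimality of $f$ then follows because, as in Lemma~\ref{lemma: reduction}, the cone $X_{\comp_\arcs(\al)}$ is indecomposable and $\ext$-orthogonal to $R$, so no summand of $B$ can split off through the approximation; alternatively one invokes Proposition~\ref{proposition: mutations coincide} directly, since the exchange triangle there is already built from a minimal approximation.
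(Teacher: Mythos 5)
Your opening step is legitimate and consistent with the paper's ordering: Proposition~\ref{proposition: mutations coincide} is established before this lemma, so you may indeed use $\comp_R X_\al \simeq X_{\comp_\arcs(\al)}$ to identify the cone of the exchange triangle, and the problem does reduce to identifying the middle term $B$. The gap is in the key step. You propose to compute $B$ by transporting everything to $\cat_R \simeq \cat_{(S,M)/\arcs}$, where the twist becomes the shift. But $B$ lies in $\add R$, and $\cat_R = {}^\perp(\shift R)/(R)$ annihilates $\add R$: in $\cat_R$ the exchange triangle degenerates to $X_\al \to 0 \to \shift_R X_\al \to \shift_R X_\al$, so the reduction retains no information whatsoever about the middle term. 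Nor does knowledge of the cone determine $B$: the multiplicity of $R_j$ in a minimal left $\add R$-approximation is governed by the morphism spaces $\cat_{(S,M)}(X_\al,R_j)$ and by which of those morphisms factor through the remaining summands of $R$, and nothing in your argument computes these. The sentence ``the exchange triangle relating $\al$ to its twist passes through exactly the arcs $\al_s$ and $\al_t$'' is the assertion of the lemma restated, not a proof of it; likewise the multiplicity $2$ in the case $\al_s=\al_t$ is asserted by appeal to the definition of $q_\arcs$, which is circular since the lemma is what links that combinatorial definition to the categorical one.

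The paper fills exactly this hole by an explicit computation of morphism spaces: it applies the $2$-Calabi--Yau tilting theorem to a triangulation containing $\arcs$ and $\al$, replacing $\cat_{(S,M)}(X_\al,R_j)$ by morphisms between indecomposable projectives over the gentle Jacobian algebra $J(Q,W)$. Gentleness forces every nonzero path out of the vertex $\al$ to lie on one of the two chains through the fans of arcs at $\al(0)$ and $\al(1)$, so the only summands of $R$ admitting a map from $P_\al$ that does not factor through an earlier summand of $R$ are $P_{\al_s}$ and $P_{\al_t}$; a separate argument (two distinct paths cannot differ by an isomorphism) yields left minimality, including in the delicate case $\al_s=\al_t$. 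Some computation of Hom-spaces of this kind is unavoidable here, and your proposal does not contain one; to repair it you would need either this module-theoretic argument or an independently proved geometric description of $\Hom$-spaces (not just $\ext^1$) in $\cat_{(S,M)}$, which the paper does not assume.
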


\begin{proof}
By the $2$-Calabi-Yau tilting theorem (see
Section~\ref{ssection: generalised cluster categories}), the functor
$H=\cat(T,\Sigma\, -)$ induces an equivalence between $\cat/T$ and
$\operatorname{mod}J(Q,W)$. Hence $H$ induces an equivalence
between $\Sigma^{-1}\add T$ and the category $\mathcal{P}$ of projective modules
over $J(Q,W)$. Let $P_{\alpha}=H(\Sigma^{-1}X_{\alpha})$ for each arc $\alpha$
in $\tri$ and let $\mathcal{P}_R=H(\Sigma^{-1}\add R)$. Then it is enough to show that there is a minimal left $\mathcal{P}_R$-approximation of $P_{\al}$ in
$\operatorname{mod}J(Q,W)$ of the form
$$P_\al \gfl P_{\al_s} \oplus P_{\al_t}.$$

We recall that $J(Q,W)$ is gentle (see Section~\ref{ssection: Riemann surfaces}).
In particular, the defining relations are all zero-relations.
Let $\delta_1,\delta_2,\ldots ,\delta_j$ be the arcs in $\tri$ incident with
$\alpha(0)$ which are after $\alpha$ in the order induced by the orientation of the boundary at $\alpha(0)$ (and listed in that order); see
Section~\ref{subsection: mutating arcs}. Similarly, let
$\varepsilon_1,\varepsilon_2,\ldots ,\varepsilon_k$ be the arcs in $\tri$ around $\alpha(1)$
which are after $\alpha$ in the order induced by the orientation of the boundary at $\alpha(1)$.

Because of the zero-relations in $J(Q,W)$, the only non-zero paths in
$Q$ starting at $\alpha$ are paths:
$$
\alpha \gfl \delta_1 \gfl \delta_2 \gfl \cdots \gfl \delta_j
$$
and
$$
\alpha \gfl \varepsilon_1 \gfl \varepsilon_2 \gfl \cdots \gfl \varepsilon_k.
$$
Thus the only non-zero morphisms from $P_{\alpha}$ to some indecomposable
projective module lie in the composition chains:
$$
P_{\alpha} \gfl P_{\delta_1} \gfl P_{\delta_2} \gfl \cdots \gfl P_{\delta_j}
$$
and
$$
P_{\alpha} \gfl P_{\varepsilon_1} \gfl P_{\varepsilon_2} \gfl \cdots \gfl P_{\varepsilon_k},
$$
or are linear combinations of these (noting that the chains may overlap).

If $\alpha_t$ is a boundary arc, but $\alpha_s$ is not, then $\alpha_s$ occurs
in the first chain above. It is easy to see that
the non-zero map $P_{\alpha} \gfl P_{\alpha_s}$ coming from the chain of
compositions is a left minimal $\mathcal{P}_R$-approximation and we are done. The
argument is similar if $\alpha_s$ is a boundary arc but $\alpha_t$ is not. If both
$\alpha_s$ and $\alpha_t$ are boundary arcs then the zero map is a left minimal $\mathcal{P}_R$-approximation.

We are left with the case where neither $\alpha_s$ nor $\alpha_t$ is a boundary arc.
Thus $\alpha_s=\delta_i$ for some $i$ while $\alpha_t=\varepsilon_{i'}$ for
some $i'$. Let $f_s$ and $f_t$ be the non-zero morphisms arising from the above chains of compositions and let $f:P_{\alpha} \gfl P_{\alpha_s} \oplus P_{\alpha_t}$
be the map with components $f_s,f_t$. It follows from the above that $f$ is a left
$\mathcal{P}_R$-approximation of $P_{\alpha}$. It remains to check that $f$ is
left minimal.

We note that if we had $f_s=kh$ for some $h:P_{\alpha} \gfl P_{\beta}$ and
$k:P_{\beta} \gfl P_{\alpha_s}$ for some $\beta\in\arcs$ then $k$ would have to be
an isomorphism since the path in $Q$ from $\alpha$ to $\alpha_s$ is not equal
to any other path in $Q$ from $\alpha$ to $\alpha_s$, and $\alpha_s$ is the first arc in $\arcs$ appearing along this path. A similar statement holds for $f_t$.

If $f$ were not left minimal, a summand of form $0 \gfl P_{\alpha_s}$
(respectively, $0 \gfl P_{\alpha_t}$) would split off and we would have a
left $\mathcal{P}_R$-approximation of the form $g_s:P_{\alpha} \gfl P_{\alpha_s}$
(respectively, $g_t:P_{\alpha} \gfl P_{\alpha_t}$).
We consider only the first case (the second case requires a similar argument).
In this case, $f_t$ factors through $g_s$, i.e.\ $f_t=v g_s$ for
some map $v:P_{\alpha_s}\rightarrow P_{\alpha_t}$. By the above, $v$ is
an isomorphism and $g_s=v^{-1}f_t$.

Again, since $g_s$ is a left $\mathcal{P}_R$-approximation, we also have that $f_s$ factors through $g_s$, i.e. $f_s=w g_s$ for some
$w:P_{\alpha_s}\gfl P_{\alpha_t}$. By the above, $w$ is an isomorphism.
Hence we have $f_s=wv^{-1}f_t$ where $wv^{-1}$ is an isomorphism.
This is a contradiction since $f_s$ and $f_t$ arise from two different paths
starting at $\alpha$. The result is proved.
\end{proof}

\begin{theo}\label{Theorem: coloured quivers}
Let $R$ be the rigid object in $\cat_{(S,M)}$
associated with the partial triangulation $\arcs$.
Then the coloured quivers $Q_\arcs$ and $Q_R$ coincide.
\end{theo}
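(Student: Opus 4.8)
The plan is to compare the two coloured quivers vertex by vertex and arrow by arrow. Both $Q_R$ and $Q_\arcs$ have vertex set $\ens{1,\ldots,m}$, with vertex $i$ corresponding to the summand $R_i=X_{\gamma_i}$ and to the arc $\gamma_i$ respectively; so it suffices to fix $i$, $j$ and a colour $c\in\zb$ and to show that the multiplicity of $R_j=X_{\gamma_j}$ in the middle term $B_i^{(c)}$ of the exchange triangle for $R_i$ with respect to $R/R_i$ equals $q_{\arcs_i}^{(c)}(\gamma_i,\gamma_j)$, where $\arcs_i=\arcs\setminus\ens{\gamma_i}$ is the partial triangulation corresponding to $R/R_i$.

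First I would identify the twisted objects $R_i^{(c)}=\comp_{R/R_i}^{(c)}X_{\gamma_i}$ geometrically. Since $\gamma_i\in A^0_{\arcs_i}(S,M)$, Proposition~\ref{proposition: mutations coincide}, applied to the rigid object $R/R_i$ (which corresponds to $\arcs_i$), gives $\comp_{R/R_i}X_{\gamma_i}\simeq X_{\comp_{\arcs_i}(\gamma_i)}$. The point is that the hypotheses of that proposition are preserved under twisting: $\comp_{\arcs_i}(\gamma_i)$ again lies in $A^0_{\arcs_i}(S,M)$, and $\comp_{R/R_i}X_{\gamma_i}$ is again indecomposable, rigid and $\ext$-orthogonal to $R/R_i$. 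Iterating (and using invertibility of the \twist\ for negative $c$), I would conclude $R_i^{(c)}\simeq X_{\comp_{\arcs_i}^{c}(\gamma_i)}$ for every $c\in\zb$.

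Next I would compute the middle term $B_i^{(c)}$. Writing $\al=\comp_{\arcs_i}^{c}(\gamma_i)$ and letting $\al_s,\al_t$ be the arcs following $\al$ in $\arcs_i$, Lemma~\ref{lemma: approx}, applied to the partial triangulation $\arcs_i$ and the arc $\al$, yields a minimal left $\add(R/R_i)$-approximation with middle term $B_i^{(c)}\simeq X_{\al_s}\oplus X_{\al_t}$. The multiplicity count then follows from the piecewise definition of $q_{\arcs_i}$: since $\gamma\mapsto X_\gamma$ is a bijection with $X_\gamma=0$ exactly for boundary arcs, and since $\gamma_j$ is a non-boundary arc, the multiplicity of $X_{\gamma_j}$ in $X_{\al_s}\oplus X_{\al_t}$ is $2$ if $\gamma_j=\al_s=\al_t$, is $1$ if $\gamma_j\in\ens{\al_s,\al_t}$ with $\al_s\neq\al_t$, and is $0$ otherwise. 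This is exactly $q_{\arcs_i}(\al,\gamma_j)=q_{\arcs_i}^{(c)}(\gamma_i,\gamma_j)$, completing the comparison.

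The main obstacle I anticipate is the second step: carefully justifying the iteration of Proposition~\ref{proposition: mutations coincide}, i.e.\ checking that twisting preserves membership in $A^0_{\arcs_i}(S,M)$ together with the rigid, $\ext$-orthogonal property at each stage, so that the identification $R_i^{(c)}\simeq X_{\comp_{\arcs_i}^{c}(\gamma_i)}$ holds for all $c$ simultaneously rather than just for $c=1$. The remaining bookkeeping, namely matching the cases $\al_s=\al_t$ and the vanishing $X_\gamma=0$ for boundary arcs against the three branches of the definition of $q$, is routine but must be carried out with the zero objects kept firmly in mind.
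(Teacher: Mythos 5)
Your proposal is correct and follows essentially the same route as the paper: the paper's proof likewise invokes Proposition~\ref{proposition: mutations coincide} to reduce the comparison of $c$-coloured arrows to the $0$-coloured case (identifying $R_i^{(c)}$ with $X_{\comp_{\arcs_i}^c(\gamma_i)}$) and then concludes via Lemma~\ref{lemma: approx}, which gives the minimal left approximation $X_\al \to X_{\al_s}\oplus X_{\al_t}$. Your additional care about iterating the proposition and about the vanishing $X_\gamma=0$ for boundary arcs simply makes explicit what the paper leaves implicit.
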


\begin{proof}
By Proposition~\ref{proposition: mutations coincide},
it is enough to prove that the sets of $0$-coloured
arrows coincide. This follows from Lemma~\ref{lemma: approx}.
\end{proof}

%%%%%%%%%%%%%%%%%%%%%%%%%%%%%%%%%%%%%%%%%%%%%%%%%%%%%%%%%%%%%%%%%%%%%%%%%%%%%%%%%%%%%%%%%%%%%%%%%%%%%
\section{Some examples}
%%%%%%%%%%%%%%%%%%%%%%%%%%%%%%%%%%%%%%%%%%%%%%%%%%%%%%%%%%%%%%%%%%%%%%%%%%%%%%%%%%%%%%%%%%%%%%%%%%%%%

%%%%%%%%%%%%%%%%%%%%%%%%%%%%%%%%%%%%%%%%%%%%%%%%%%%%%%%%
\subsection{The $A_n$ case}
%%%%%%%%%%%%%%%%%%%%%%%%%%%%%%%%%%%%%%%%%%%%%%%%%%%%%%%%

In this section, we assume that the category $\cat$ is the cluster category
of type $A_n$.

Suppose that $R=R_1\oplus \cdots \oplus R_m$ is a basic rigid object in
$\cat$. In Section~\ref{subsection: coloured quivers for rigid objects} we have associated a coloured quiver $Q$ with $R$.
If $R_k$ is an indecomposable direct summand of $R$ then the rigid object
$\mut_k R$ also has a coloured quiver, $\widetilde{Q}$, associated with it, and we can ask if $\widetilde{Q}$
can be computed from $Q$. This is known in the $d$-cluster-tilting object case
of a $d+1$-Calabi-Yau category~\cite[Thm.\ 2.1]{BuanThomas} but is not known for a general rigid object. In Section~\ref{section: mutation} we will indicate some results in this direction with a categorical proof, but here we give a complete answer for the cluster category of type $A$ using a combinatorial (geometric) proof. In this case,
the corresponding surface is a disk with $n+3$ marked points
(see~\cite{CCS}), which we shall denote $(S,M)$; as usual, we denote by $\arcs$ the set of noncrossing arcs in $(S,M)$ corresponding to the indecomposable
direct summands of $R$, writing $\gamma_i$ for the arc corresponding to $R_i$.
We may assume that all arcs are straight lines. We have seen above that we can
compute $Q$ using $\arcs$ instead of $R$.

If $R$ is indecomposable, the corresponding coloured quiver is trivial
(a single vertex and no arrows) and there is nothing more to do.
We assume we are not in this case.

The complement in $(S,M)$ of $\arcs\setminus \{\gamma_i\}$
is a union of disks, including one, $D_i$, containing $\gamma_i$, with
a polygonal boundary.
Then, by its definition, $\kappa_{\arcs\setminus \{\gamma_i\}}$
has the effect of rotating each of the endpoints of $\gamma_i$ anticlockwise
one edge around the polygonal boundary of $D_i$.

If the boundary of $D_i$ is a polygon with an even number of sides
and $\gamma_i$ joins two opposite vertices on this boundary then
we say that $\gamma_i$ is \emph{symmetric}.

\begin{lemma} \label{l:symmetric}
The arc $\gamma_i$ is symmetric if and only if, for any vertex $j$
such that $i$ and $j$ are ends of a common arrow in $Q$,
there is a unique colour $c$ such that $q_{(c)}(i,j)\not=0$.
\end{lemma}

\begin{proof}
Let $D_i$ be the disk defined above.
If $\gamma_i$ is symmetric, then the minimum number of \twists\
with respect to $\arcs\setminus \gamma_i$ required to return $\gamma_i$
to itself is equal to half the number of sides of the boundary of $D_i$
and this has the effect of rotating $\gamma_i$ through half a revolution.
It follows that if $i$ and $j$ are ends of a common arrow in $Q$,
there is a unique colour $c$ such that $q_{(c)}(i,j)\not=0$.
If $\gamma_i$ is not symmetric, the minimum number of \twists\ required
is equal to the total number of sides of the boundary of $D_i$, and this
has the effect of rotating $\gamma_i$ through a full revolution. We
see that if $i$ and $j$ are ends of a common arrow in $Q$,
there are exactly two distinct colours $c$ such that $q_{(c)}(i,j)\not=0$.
\end{proof}

We remark that a vertex $j$ as in Lemma~\ref{l:symmetric} always exists,
since we have assumed that $\arcs$ has more than one element.
It follows that whether $\gamma_i$ is symmetric or not is determined by
the coloured quiver $Q$. See Figure~\ref{figure:evencase} for an example.

\begin{figure}
\begin{center}
\psfragscanon
\psfrag{a}{$\scriptstyle\al$}
\psfrag{b}{$\scriptstyle\beta$}
\includegraphics[scale=0.4]{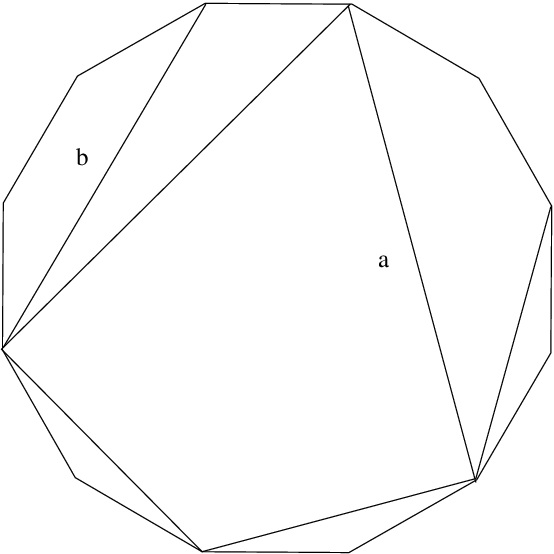} %magnification in xfig=100%
\end{center}
\caption{The arc $\al$ has order $3$ under \twisting\ and lies in a hexagon,
while the arc $\beta$ has order $5$ and lies in a pentagon.}
\label{figure:evencase}
\end{figure}

We have the following:

\begin{lemma} \label{lemma:typeAdiformula}
Fix $i\in \{1,2,\ldots ,m\}$.
Choose a vertex $j$ such that $q_{(c)}(i,j)\not=0$
for some $c$ (such a $j$ always exists by our assumption that $\arcs$
more than one element). Then we have:
$$d_i=\max\{c\geq 0\,:\, q_{(c)}(i,j)\not=0\}+\min\{c\geq 0\,:\,
q_{(c)}(j,i)\not=0\}+1.$$
\end{lemma}

\begin{proof}
Suppose first that $\gamma_i$ is not symmetric.
Then the situation is as in Figure~\ref{figure:diformula},
where the labels on the boundary indicate the number of edges along
sections of the boundary.
Since $d_i$ is the number of sides of $D_i$, we have that
$$d_i=(s+t)+r+1=\max\{c\geq 0\,:\, q_{(c)}(i,j)\not=0\}+\min\{c\geq 0\,:\,q_{(c)}(j,i)\not=0\}+1.$$
as required.
If $\gamma_i$ is symmetric, then $d_i$ is half the number of sides of $D_i$.
The situation can again be depicted as in Figure~\ref{figure:diformula}
(with the additional restriction that $t+r+1=s$) and we obtain:
$$d_i=t+r+1=\max\{c\geq 0\,:\, q_{(c)}(i,j)\not=0\}+\min\{c\geq 0\,:\,q_{(c)}(j,i)\not=0\}+1.$$
\end{proof}

\begin{figure}
\psfragscanon
\psfrag{r}{$r$}
\psfrag{s}{$s$}
\psfrag{t}{$t$}
\psfrag{i}{$i$}
\psfrag{j}{$j$}
\includegraphics[width=4cm]{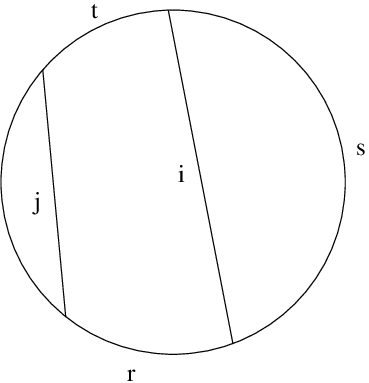}
\caption{Proof of Lemma~\ref{lemma:typeAdiformula}.}
\label{figure:diformula}
\end{figure}

The possibility of symmetric arcs makes it difficult to compute the new
coloured quiver after mutation of $\arcs$ at an arc, so we use a modified
version of the original quiver, defined as follows.

\vspace{.2cm}
\noindent \textbf{Definition}
The \emph{modified coloured quiver} $Q^+_{\arcs}$ of $\arcs$ is the
coloured quiver obtained from $Q_{\arcs}$ as follows. The vertices
are $\{1,2,\ldots ,m\}$. We set
$$
d^+_i(\arcs)=
\begin{cases}
2d_i(\arcs), & \text{if $\gamma_i$ is symmetric;} \\
d_i(\arcs), & \text{if $\gamma_i$ is not symmetric.}
\end{cases}
$$
If $\gamma_i$ is symmetric, we set $q^+_{(c)}(i,j)=q_{(c)}(i,j)$
and $q^+_{(c+d_i)}(i,j)=q_{(c)}(i,j)$ for all $c\in \{0,1,\ldots ,d_i-1\}$,
while if $\gamma_i$ is not symmetric, we set $q^+_{(c)}(i,j)=q_{(c)}(i,j)$
for all $c\in \{0,1,\ldots ,d_i-1\}$.
\vspace{.2cm}

Note that for any two distinct vertices $i$ and $j$, the modified coloured
quiver always has exactly two arrows from $i$ to $j$. We usually write this
as a single arrow labelled with the two colours $(l,l')$, with $l\leq l'$.
We also note that the arrows in $Q^+_{\arcs}$ can be obtained from $\arcs$
using the same rules (see Section~\ref{ssection: coloured quivers})
as for $Q_{\arcs}$ except that we use the numbers $d^+_i(\arcs)$ instead of the
numbers $d_i(\arcs)$. Using the same arguments as in the proof of
Lemma~\ref{lemma:typeAdiformula}, we have (again choosing a vertex $j$
such that $q^+_{(c)}(i,j)\not=0$ for some $c$):
\begin{equation}
\label{e:vertex}
d^+_i=\max\{c\geq 0\,:\, q^+_{(c)}(i,j)\not=0\}+\min\{c\geq 0\,:\,
q^+_{(c)}(j,i)\not=0\}+1.
\end{equation}

\begin{lemma} \label{lemma:modified}
The modified coloured quiver of $\arcs$ is determined by the coloured
quiver of $\arcs$ and vice versa.
\end{lemma}

\begin{proof}
Given the coloured quiver $Q_{\arcs}$ of $\arcs$, Lemma~\ref{l:symmetric}
indicates how to determine which arcs are symmetric, and thus how to
compute $Q^+_{\arcs}$ directly from $Q_{\arcs}$ using the definition above.
Note that an arc $\gamma_i$ is symmetric if and only if $d_i^+=d_i^+(\arcs)$ is
even and there is a vertex $j$ and a colour $c$ such that
$q^+_{(c)}(i,j)\not=0$ and $q^+_{(c+\frac{1}{2}d^+_i)}(i,j)\not=0$. It follows that
the coloured quiver of $\arcs$ can be determined from the modified coloured
quiver of $\arcs$.
\end{proof}

It is thus enough for us to give a method for determining the modified
coloured quiver of the mutation of a rigid object in terms of the modified
coloured quiver of the rigid object. We first compute the change in the
quiver in a number of cases.

Figures~\ref{figure:caseI}--\ref{figure:caseV}
each show a configuration of arcs in $(S,M)$,
together with the result after mutation at $\gamma_k$.
In each case, a label on part of the boundary indicates the number of
boundary edges between the two nearest arc ends on the boundary and
the black dot indicates the end of arc $k$ to show how this has changed
after the mutation. The following is a simple calculation:

\begin{lemma}
For each of the five cases in Figures~\ref{figure:caseI}--\ref{figure:caseV},
the effect of mutation at $k$ on the corresponding modified coloured quiver
is as shown.
\end{lemma}

\begin{figure}
\begin{center}
\psfragscanon
\psfrag{i}{$i$}
\psfrag{k}{$k$}
\psfrag{p}{$p$}
\psfrag{q-1}{$q-1$}
\psfrag{q}{$q$}
\psfrag{r+1}{$r+1$}
\psfrag{r}{$r$}
\psfrag{(r,r+p)}{$\scriptstyle (r,r+p)$}
\psfrag{(0,q)}{$\scriptstyle (0,q)$}
\psfrag{(0,p)}{$\scriptstyle (0,p)$}
\psfrag{(q-1,q+r)}{$\scriptstyle (q-1,q+r)$}
\psfrag{M}{$\mapsto$}
\includegraphics[scale=0.5]{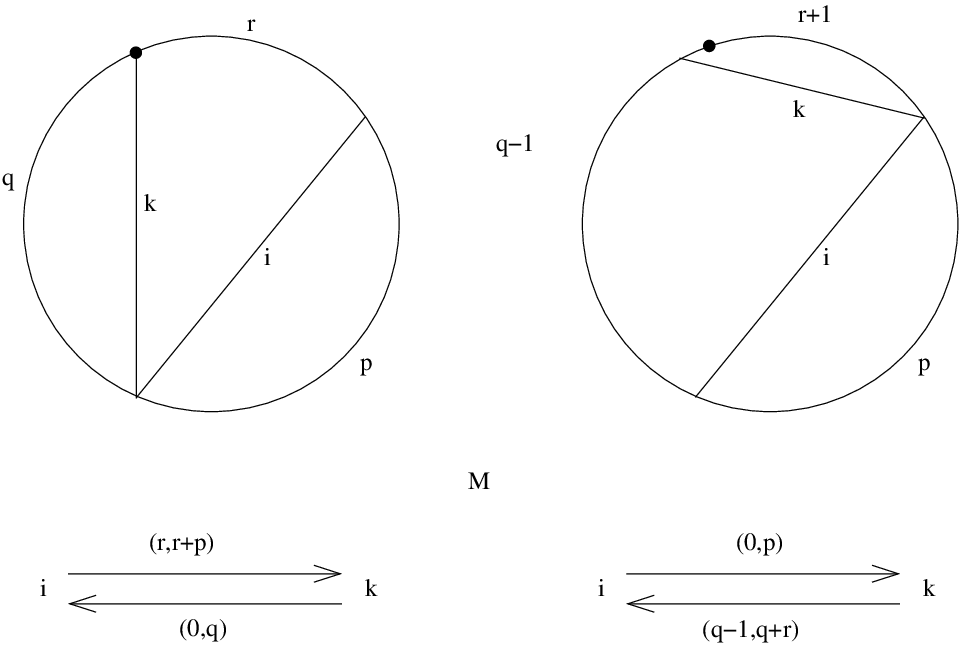} %magnification in xfig=100%
\end{center}
\caption{Case I. Here we have $p\geq 2$, $q\geq 2$, $r\geq 1$. Note that $d^+_i=p+r+1$, $\tilde{d}^+_i=p+q$ and $d^+_k=\tilde{d}^+_k=q+r+1$.}
\label{figure:caseI}
\end{figure}

\begin{figure}
\begin{center}
\psfragscanon
\psfrag{i}{$i$}
\psfrag{j}{$j$}
\psfrag{k}{$k$}
\psfrag{p}{$p$}
\psfrag{q-1}{$q-1$}
\psfrag{q}{$q$}
\psfrag{r+1}{$r+1$}
\psfrag{r}{$r$}
\psfrag{s}{$s$}
\psfrag{t}{$t$}
\psfrag{(t,p+t)}{$\scriptstyle (t,p+t)$}
\psfrag{(q,q+r+1)}{$\scriptstyle (q,q+r+1)$}
\psfrag{(r,r+s)}{$\scriptstyle (r,r+s)$}
\psfrag{(0,q+t+1)}{$\scriptstyle (0,q+t+1)$}
\psfrag{(q,q+s)}{$\scriptstyle (q,q+s)$}
\psfrag{(t+1,p+t+1)}{$\scriptstyle (t+1,p+t+1)$}
\psfrag{(q-1,q+r)}{$\scriptstyle (q-1,q+r)$}
\psfrag{(q+t,q+r+t+1)}{$\scriptstyle (q+t,q+r+t+1)$}
\psfrag{(0,s)}{$\scriptstyle (0,s)$}
\psfrag{M}{$\mapsto$}
\includegraphics[scale=0.5]{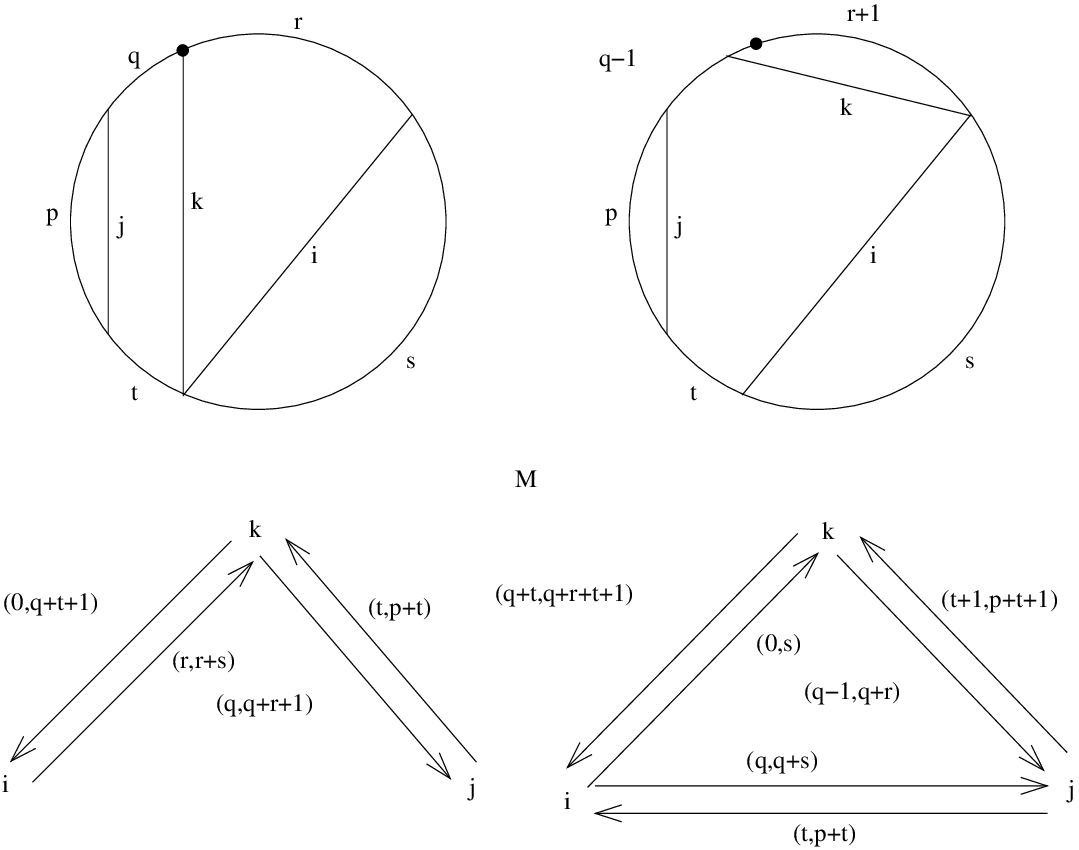} %magnification in xfig=100%
\end{center}
\caption{Case II. Here we have $p\geq 2$, $q\geq 1$, $r\geq 1$, $s\geq 2$, $t\geq 0$. Note that $d^+_i=r+s+1$, $\tilde{d}^+_i=q+t+s+1$,
$d^+_j=\tilde{d}^+_j=p+q+t+1$ and $d^+_k=\tilde{d}^+_k=q+r+t+2$.}
\label{figure:caseII}
\end{figure}

\begin{figure}
\begin{center}
\psfragscanon
\psfrag{i}{$i$}
\psfrag{j}{$j$}
\psfrag{k}{$k$}
\psfrag{p}{$p$}
\psfrag{q}{$q$}
\psfrag{r}{$r$}
\psfrag{s}{$s$}
\psfrag{s-1}{$s-1$}
\psfrag{t}{$t$}
\psfrag{t+1}{$t+1$}
\psfrag{(t,p+t)}{$\scriptstyle (t,p+t)$}
\psfrag{(q+1,q+s+1)}{$\scriptstyle (q+1,q+s+1)$}
\psfrag{(0,s)}{$\scriptstyle (0,s)$}
\psfrag{(q+t+1,q+r+t+1)}{$\scriptstyle (q+t+1,q+r+t+1)$}
\psfrag{(t+1,p+t+1)}{$\scriptstyle (t+1,p+t+1)$}
\psfrag{(q,q+r)}{$\scriptstyle (q,q+r)$}
\psfrag{(q,q+s)}{$\scriptstyle (q,q+s)$}
\psfrag{(s-1,q+s+t+1)}{$\scriptstyle (s-1,q+s+t+1)$}
\psfrag{(0,r)}{$\scriptstyle (0,r)$}
\psfrag{M}{$\mapsto$}
\includegraphics[scale=0.5]{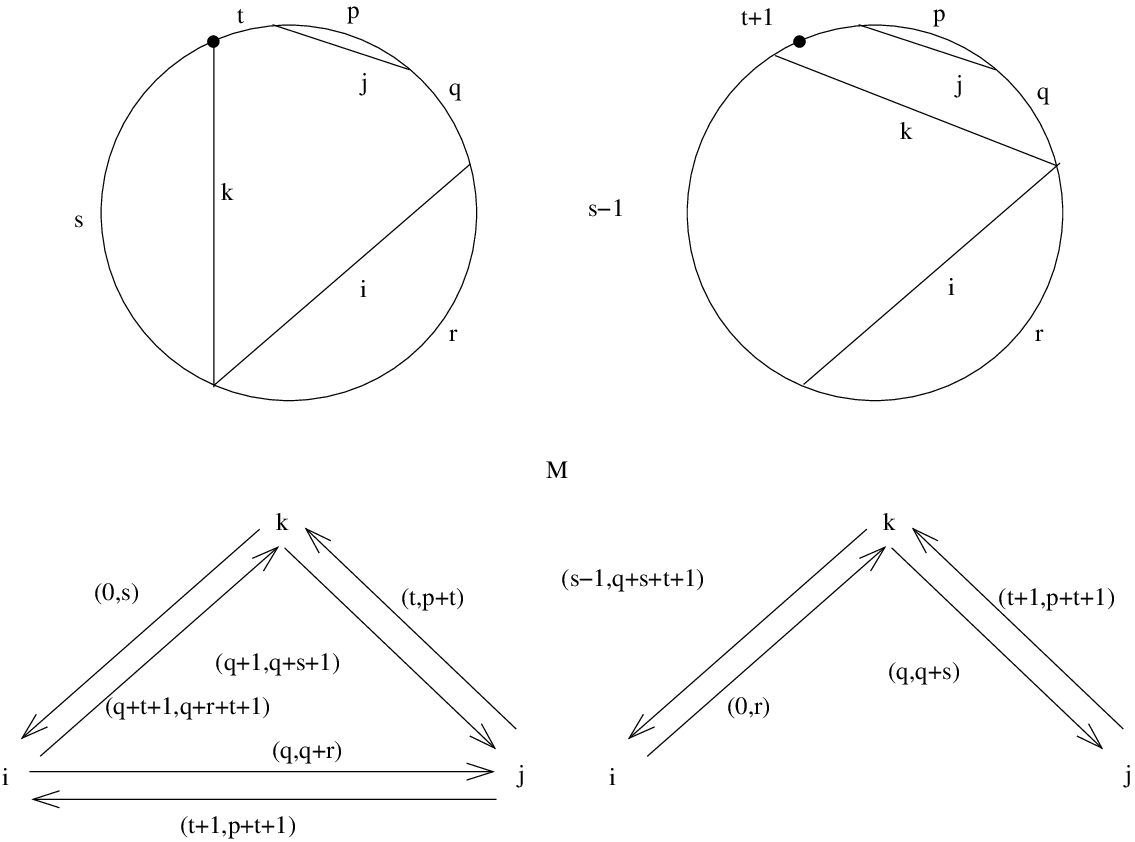} %magnification in xfig=100%
\end{center}
\caption{Case III. Here we have $p\geq 2$, $q\geq 0$, $r\geq 2$, $s\geq 2$, $t\geq 0$. Note that $d_i^+=q+r+t+2$, $\tilde{d}^+_i=r+s$,
$d_j^+=\tilde{d}_j^+=p+q+t+2$ and $d_k^+=\tilde{d}_k^+=q+s+t+2$.}
\label{figure:caseIII}
\end{figure}

\begin{figure}
\begin{center}
\psfragscanon
\psfrag{i}{$i$}
\psfrag{j}{$j$}
\psfrag{k}{$k$}
\psfrag{p}{$p$}
\psfrag{q}{$q$}
\psfrag{r}{$r$}
\psfrag{r-1}{$r-1$}
\psfrag{s}{$s$}
\psfrag{s+1}{$s+1$}
\psfrag{t}{$t$}
\psfrag{(t,p+t)}{$\scriptstyle (t,p+t)$}
\psfrag{(q,q+s+1)}{$\scriptstyle (q,q+s+1)$}
\psfrag{(0,r)}{$\scriptstyle (0,r)$}
\psfrag{(s,q+s+t+1)}{$\scriptstyle (s,q+s+t+1)$}
\psfrag{(r-1,r+s)}{$\scriptstyle (r-1,r+s)$}
\psfrag{(q,q+r)}{$\scriptstyle (q,q+r)$}
\psfrag{(0,q+t+1)}{$\scriptstyle (0,q+t+1)$}
\psfrag{M}{$\mapsto$}
\includegraphics[scale=0.5]{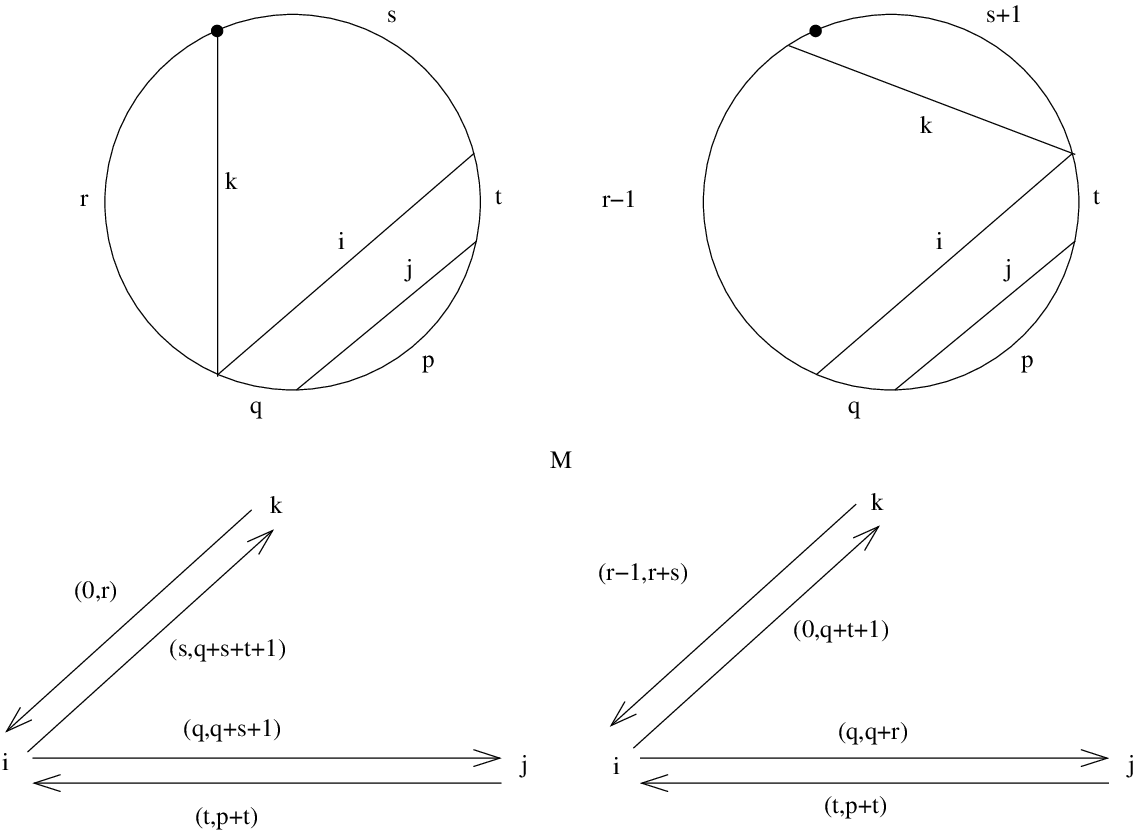} %magnification in xfig=100%
\end{center}
\caption{Case IV. Here we have $p\geq 2$, $q\geq 0$, $r\geq 2$, $s\geq 1$, $t\geq 0$, $q+t\geq 1$. Note that $d_i^+=q+s+t+2$, $\tilde{d}^+_i=q+r+t+1$,
$d_j^+=\tilde{d}_j^+=p+q+t+1$ and $d_k^+=\tilde{d}^+_k=r+s+1$.}
\label{figure:caseIV}
\end{figure}

\begin{figure}
\begin{center}
\psfragscanon
\psfrag{i1}{$i_1$}
\psfrag{i2}{$i_2$}
\psfrag{k}{$k$}
\psfrag{p}{$p$}
\psfrag{q}{$q$}
\psfrag{r}{$r$}
\psfrag{s}{$s$}
\psfrag{(q,q+s+1)}{$\scriptstyle (q,q+s+1)$}
\psfrag{(0,r)}{$\scriptstyle (0,r)$}
\psfrag{(0,p)}{$\scriptstyle (0,p)$}
\psfrag{(s,q+s+1)}{$\scriptstyle (s,q+s+1)$}
\psfrag{(s,p+s)}{$\scriptstyle (s,p+s)$}
\psfrag{(0,q+1)}{$\scriptstyle (0,q+1)$}
\psfrag{(0,s+1)}{$\scriptstyle (0,s+1)$}
\psfrag{(q,q+r)}{$\scriptstyle (q,q+r)$}
\psfrag{M}{$\mapsto$}
\includegraphics[scale=0.5]{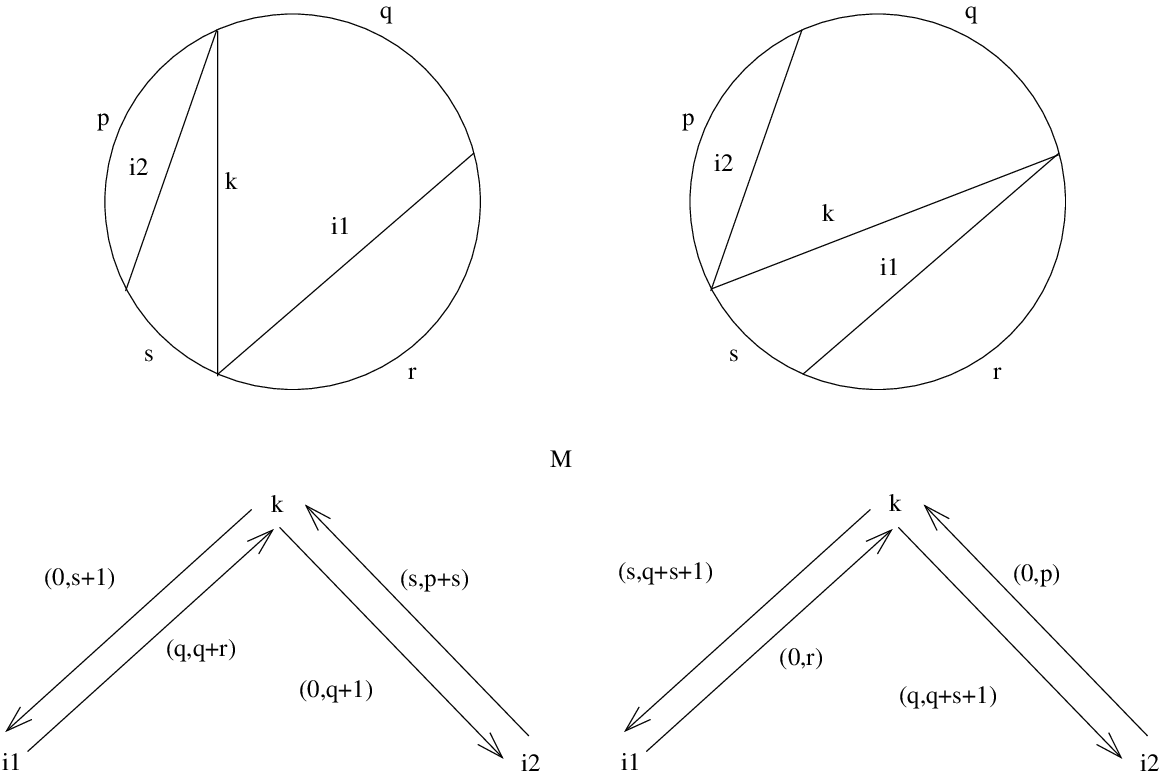} %magnification in xfig=100%
\end{center}
\caption{Case V. Here we have $p\geq 2$, $q\geq 1$, $r\geq 2$, $s\geq 1$.
Note that $d_{i_1}^+=q+r+1$, $\tilde{d}_{i_1}^+=r+s+1$, $d^+_{i_2}=p+s+1$,
$\tilde{d}^+_{i_2}=p+q+1$ and $d_k^+=\tilde{d}_k^+=q+s+2$.}
\label{figure:caseV}
\end{figure}

We next consider, in the general case, the effect of mutation at a
vertex $k$ on the modified coloured quiver $Q^+=Q^+_{\arcs}$.
Let $k^*$ be the set of vertices at the targets of arrows starting at
$k$ with colour zero. A vertex $i$ lies in $k^*$ if and only if $\gamma_i$
is incident with a single endpoint of $\gamma_k$ and the clockwise
angle from $\gamma_k$ and $\gamma_i$ is inside $(S,M)$; see the
diagram on the left hand side of Figure~\ref{figure:caseI}, where $i$
is an example of an element of $k^*$.

Suppose $i\not\in k^*$ and that there is an arrow from $i$ to $k$ in $Q^+$.
Then neither of the colours in the label of the arrow is equal to
$d^+_i-1$ (else $i$ would lie in $k^*$).
The effect of mutating at $k$ is to increase both of these colours by $1$.
If there is an arrow from $k$ to $i$ in $Q^+$ then, for the same reason,
the colours in the label of the arrow are non-zero and mutation at $k$ decreases these by $1$.

If $i\in k^*$, then the effect on $Q^+$ of mutating at $k$ is
shown in Figure~\ref{figure:caseI}: the disk in this picture should be
interpreted as the boundary of the connected component of the complement
in $(S,M)$ of $\arcs \setminus \{\gamma_i,\gamma_k\}$
containing $\gamma_i$ and $\gamma_k$.

Mutation at $k$ can only affect the vertex labels corresponding to arcs on the
boundary of the connected component of the complement in $(S,M)$ of the arcs
in $\arcs\setminus \{\gamma_k\}$ containing $\gamma_k$,
so no other vertex labels can change.

We observe that mutation at $k$ can only affect arrows in $Q^+$
incident with $k$ (already considered above) or incident with at least one
vertex in $k^*$.

Consider first the arrows between vertices $i\in k^*$ and $j\not\in k^*$.
Figures~\ref{figure:caseII},~\ref{figure:caseIII} and~\ref{figure:caseIV}
show the three possibilities for the location of $\gamma_j$. In each case
the disk should be interpreted as the boundary of the connected component
of the complement in $(S,M)$ of the arcs in
$\arcs\setminus \{\gamma_i,\gamma_j,\gamma_k\}$ containing $\gamma_i,\gamma_j$
and $\gamma_k$.

The effect of mutation on the corresponding full subquiver of $Q^+$ is as shown.

Finally, we consider the arrows between vertices $i_1,i_2\in k^*$.
This case is shown in Figure~\ref{figure:caseIV}, with the disk interpreted
as the boundary of the connected component of the complement in $(S,M)$ of
the arcs in $\arcs\setminus \{\gamma_{i_1},\gamma_{i_2},\gamma_k\}$ containing
$\gamma_{i_1},\gamma_{i_2}$ and $\gamma_k$.
The effect of mutation on the corresponding full subquiver of $Q^+$ is as shown.

Analysing the effect of mutation leads us to the following
method for mutating a (modified) coloured quiver in type A.

\begin{prop}
Let $R=R_1\oplus \cdots \oplus R_m$ be a rigid object in the cluster category of
type $A_n$, with associated modified coloured quiver $Q^+$.
Let $R_k$ be a summand of $R$ and let $\widetilde{Q}^+$ denote the modified
coloured quiver of $\mu_k(R)$, with periodicity $\tilde{d}_i$ associated
to vertex $i$. Then $\widetilde{Q}^+$ can be computed in the following way.
The letters $i,j,k$ always refer to distinct vertices.

\vskip 0.2cm
\noindent \textbf{Type A modified coloured quiver mutation:}
\vskip 0.2cm

\begin{enumerate}
\item[(i)]
Suppose we have the following arrows:
$$
\xymatrix@C=1.5cm{i \ar@<1ex>^{(a,a')}[r] & k \ar@<1ex>^{(0,b')}[l]
&
j \ar@<1ex>^{(c,c')}[r] & k \ar@<1ex>^{(d,d')}[l]
}
$$
where $d\not=0$. Add the following arrows:
$$\xymatrix@C=1.5cm{i \ar@<1ex>^{(d,d+a'-a)}[r] & j \ar@<1ex>^{(c,c')}[l]}$$
and cancel any pairs of arrows between $i$ and $j$ in the same direction whose
colours differ by $1$.
\item[(ii)]
Suppose we have the following full subquiver of $Q$:
$$
\xymatrix@C=1.5cm{k \ar@<1ex>^{(0,b')}[r] & i \ar@<1ex>^{(a,a')}[l]
\ar@<1ex>^{(d,d')}[r] & j \ar@<1ex>^{(c,c')}[l]}.
$$
Then change the arrows between $i$ and $j$ to:
$$\xymatrix@C=1.5cm{i \ar@<1ex>^{(d,d+b')}[r] & j \ar@<1ex>^{(c,c')}[l]}.$$
\item[(iii)]
Apply the following rule to all vertices $i$ with an arrow to or from $k$:
$$
\xymatrix@C=1.5cm{i \ar@<1ex>^{(a,a')}[r] & k \ar@<1ex>^{(b,b')}[l]}
\mapsto
\begin{cases}
\xymatrix@C=1.5cm{i \ar@<1ex>^{(a+1,a'+1)}[r] & k \ar@<1ex>^{(b-1,b'-1)}[l]}
& \mbox{if\ }b\not=0; \\
\xymatrix@C=1.5cm{i \ar@<1ex>^{(0,a'-a)}[r] & k \ar@<1ex>^{(b'-1,a+b')}[l]}
 & \mbox{if\ }b=0.
\end{cases}
$$
If $b=0$, add $b'-a$ to the label at vertex $i$, giving new value $a'+b'-a$.
Otherwise, the vertex labels are unchanged.
\end{enumerate}
\end{prop}

\begin{proof}
By the discussion above, Step (iii) indicates how arrows incident with $k$
change under mutation. The case $b=0$ is shown in Case I (see
Figure~\ref{figure:caseI}), where we take
$a=r$, $a'=r+p$, $b=0$ and $b'=q$ and we note that it is easy
to check using~\eqref{e:vertex} that the change in vertex labels is as claimed.

Since mutation at $k$ can only affect these arrows or those arrows incident
with at least one vertex in $k^*$, it remains only to check that applying
the above method has the right effect in Cases II-V considered above.
Case II (Figure~\ref{figure:caseII}) can be regarded as an instance of Step (i) with
$a=r$, $a'=r+s$, $b=0$, $b'=q+t+1$, $c=t$, $c'=p+t$, $d=q$ and $d'=q+r+1$,
followed by Step (iii).
Case III (Figure~\ref{figure:caseIII}) can be regarded as an instance of Step (i) with
$a=q+t+1$, $a'=q+r+t+1$, $b=0$, $b'=s$, $c=t$, $c'=p+t$, $d=q+1$ and $d'=q+s+1$, followed by Step (iii).
Case IV (Figure~\ref{figure:caseIV}) can be regarded as an instance of Step (ii) with $a=s$,
$a'=q+s+t+1$, $b=0$, $b'=r$, $c=t$, $c'=p+t$, $d=q$ and $d'=q+s+1$, followed by Step (iii).
In Case V (Figure~\ref{figure:caseV}) we see that only Step (iii) is applied, and we are done.
\end{proof}

%%%%%%%%%%%%%%%%%%%%%%%%%%%%%%%%%%%%%%%%%%%%%%%%%%%%%%%%
\subsection{An example with infinitely many colours}
\label{ssection: infinitecolour}
%%%%%%%%%%%%%%%%%%%%%%%%%%%%%%%%%%%%%%%%%%%%%%%%%%%%%%%%

We consider again the example from Figure~\ref{figure:cuttingexample1},
i.e.\ a torus with a single boundary component with two marked points.
We show again the partial triangulation of this surface in
Figure~\ref{figure:infiniteexample1}.
Several copies are drawn to make it easier to see mutations at each of the arcs.
The corresponding coloured quiver is given below the surface: note
that removing any of the three arcs leaves a hexagon; it follows
that mutation at any of the arcs has order $3$, and we get finitely
many colours: $0$, $1$ and $2$, appearing as labels on the arrows.

Now suppose we mutate at arc $1$. We obtain the partial triangulation in
Figure~\ref{figure:infiniteexample2}; the corresponding quiver is
given below the picture of the surface. Here, an arrow is labelled
with $\mathbb{Z}$ to represent an infinite number of arrows, one coloured
$n$ for each integer $n$. This infinity of arrows comes from mutating
at arc number $2$. If we cut along the remaining arcs in the partial
triangulation, we obtain a cylinder.
Then, after each mutation a small neighbourhood of the triangulation
is the same at each end of the arc (which explains the regularity),
but as more and
more mutations are made the arc wraps itself more and more around
the cylinder. Thus we see that, even if the quiver is locally finite to start
with, after a mutation it might not be.

\begin{figure}
\begin{center}
\psfragscanon
\psfrag{1}{$\scriptstyle 1$}
\psfrag{2}{$\scriptstyle 2$}
\psfrag{3}{$\scriptstyle 3$}
\psfrag{(0,0)}{$\scriptstyle (0,0)$}
\psfrag{(0,1)}{$\scriptstyle (0,1)$}
\psfrag{(1,2)}{$\scriptstyle (1,2)$}
\psfrag{(2,2)}{$\scriptstyle (2,2)$}
\psfrag{B1}{\pscirclebox{$1$}} %1 in a circle
\psfrag{B2}{\pscirclebox{\blue{$2$}}} %2 in a circle
\psfrag{B3}{\pscirclebox{\red{$3$}}} %3 in a circle
\includegraphics[scale=0.4]{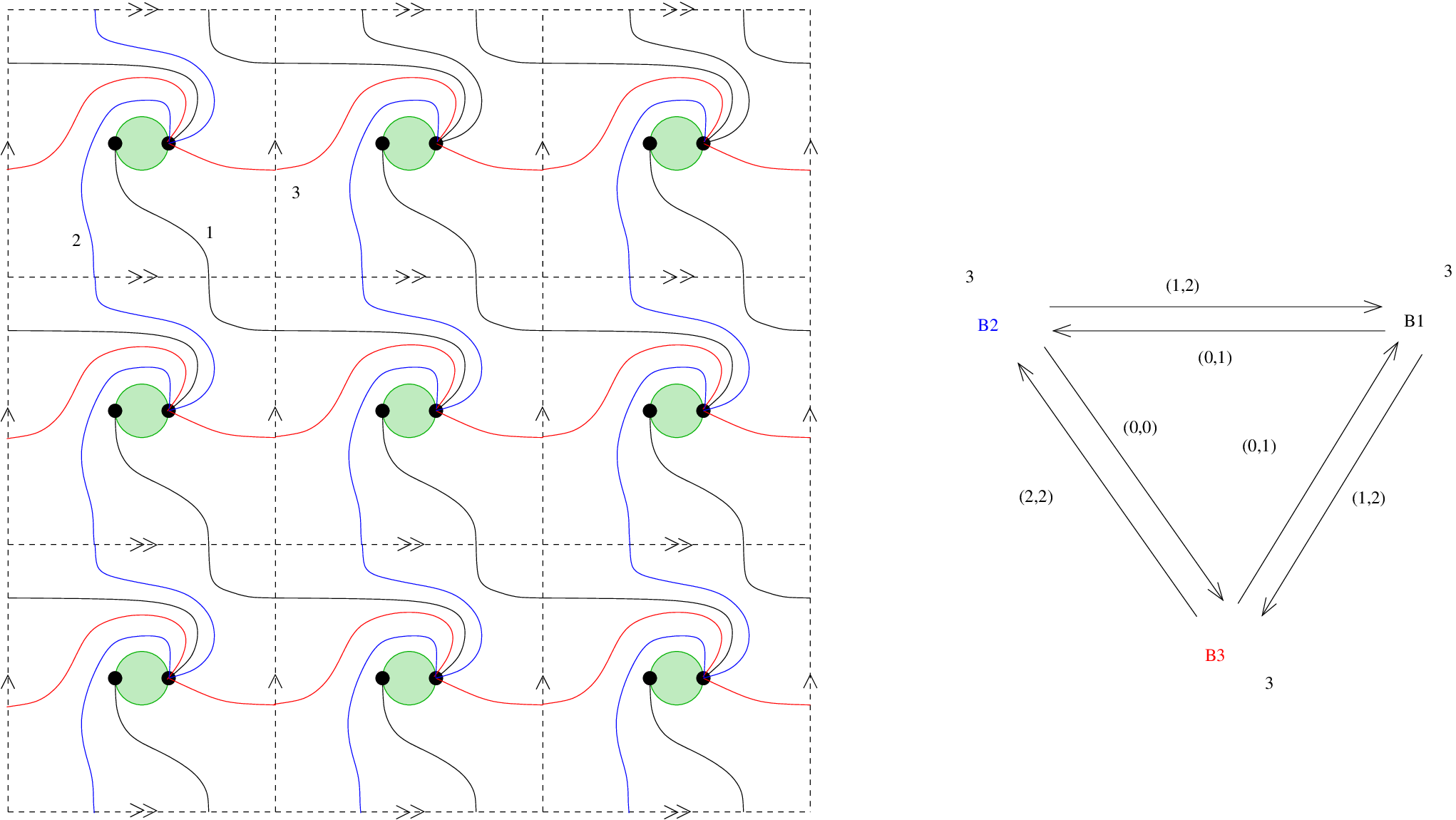} %magnification in xfig=100%
\end{center}
\caption{A partial triangulation of a torus with a single boundary component and two marked points and the corresponding coloured quiver.
}
\label{figure:infiniteexample1}
\end{figure}

\begin{figure}
\begin{center}
\psfragscanon
\psfrag{0}{$\scriptstyle 0$}
\psfrag{1}{$\scriptstyle 1$}
\psfrag{2}{$\scriptstyle 2$}
\psfrag{3}{$\scriptstyle 3$}
\psfrag{(0,1)}{$\scriptstyle (0,1)$}
\psfrag{(1,2)}{$\scriptstyle (1,2)$}
\psfrag{(0,2)}{$\scriptstyle (0,2)$}
\psfrag{B1}{\pscirclebox{$1$}} %1 in a circle
\psfrag{B2}{\pscirclebox{\blue{$2$}}} %2 in a circle
\psfrag{B3}{\pscirclebox{\red{$3$}}} %3 in a circle
\psfrag{Z}{$\scriptstyle \mathbb{Z}$}
\includegraphics[scale=0.4]{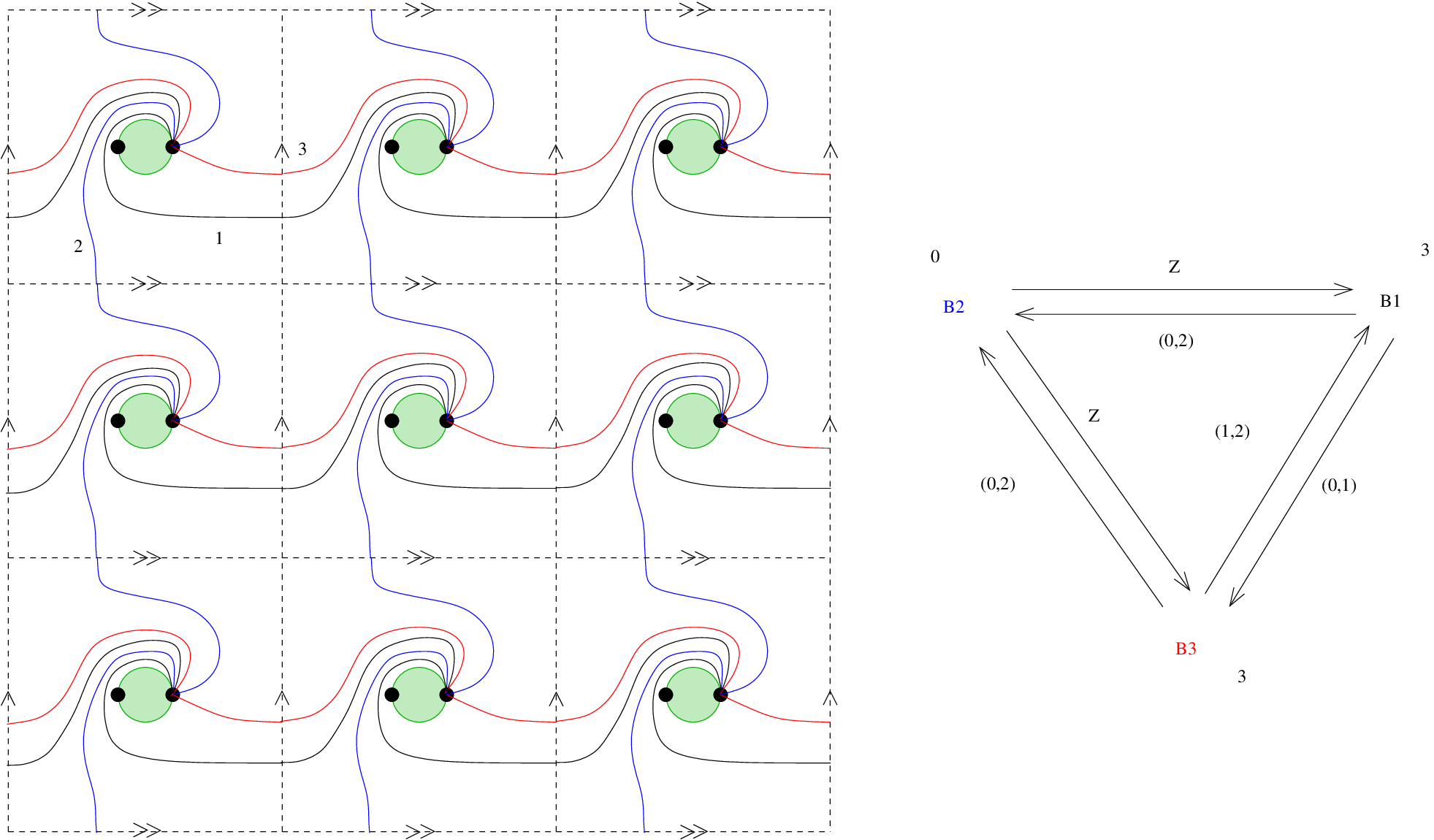} %magnification in xfig=100%
\end{center}
\caption{The result of mutating the partial triangulation in Figure~\ref{figure:infiniteexample1} at arc $1$ and the corresponding coloured quiver.}
\label{figure:infiniteexample2}
\end{figure}

\begin{rk}
We note that in the example in Figure~\ref{figure:infiniteexample2},
the coloured quiver contains
a two-cycle of arrows both coloured zero, a situation that does not arise in the
coloured quivers arising in $m$-cluster categories~\cite[Sect.\ 2]{BuanThomas}.

\end{rk}

%%%%%%%%%%%%%%%%%%%%%%%%%%%%%%%%%%%%%%%%%%%%%%%%%%%%%%%%%
%\subsection{The Frobenius case and \emph{[GLS]} ?}
%%%%%%%%%%%%%%%%%%%%%%%%%%%%%%%%%%%%%%%%%%%%%%%%%%%%%%%%%

%%%%%%%%%%%%%%%%%%%%%%%%%%%%%%%%%%%%%%%%%%%%%%%%%%%%%%%%
\section{Partial categorical interpretation}\label{section: mutation}
%%%%%%%%%%%%%%%%%%%%%%%%%%%%%%%%%%%%%%%%%%%%%%%%%%%%%%%%

In this section, we prove the following result, which gives
a partial description of the mutation of a coloured quiver associated
to a rigid object in a categorical context. Note that the result
applies to any $2$-Calabi-Yau triangulated category (under mild
assumptions), and is not restricted to the surface case.

\begin{theo}\label{Theorem: mutation}
Let $\cat$ be a Hom-finite,
Krull--Schmidt, 2-Calabi--Yau triangulated $K$-category.
Let $Q$ be the coloured quiver associated with
a rigid object $R=R_1\oplus \cdots \oplus R_m\in\cat$ and
let $\widetilde{Q}$ be the coloured quiver
associated with $\mu_k R$, for some vertex $k$ of $Q$.
Denote the periodicity associated with vertex $i$
of $Q$ (resp. $\widetilde{Q}$) by $d_i$ (resp. $\tilde{d}_i$).
\begin{itemize}
 \item[(i)] We have:
 \begin{itemize}
  \item[$\bullet$] $d_k = \tilde{d}_k$ and
  \item[$\bullet$] for any $j\in Q_0$ and any $c\in\zb/d_k$,
  $\widetilde{q}_{(c)}(k,j) = q_{(c+1)}(k,j)$.
 \end{itemize}
 \item[(ii)] Let $i,j\in Q_0$ be such that
 $q_{(0)}(k,j)=0=q_{(0)}(k,i)$. Then we have:
 \begin{itemize}
  \item[$\bullet$] $\tilde{d}_i=d_i$, $\tilde{d}_j=d_j$;
  \item[$\bullet$] for any $c\in\zb/d_j$, $\widetilde{q}_{(c)}(j,k) = q_{(c-1)}(j,k)$;
  \item[$\bullet$] for any $c\in\zb/d_i$, $\widetilde{q}_{(c)}(i,j) = q_{(c)}(i,j)$.
 \end{itemize}
\end{itemize}
\end{theo}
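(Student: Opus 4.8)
Part (i) is immediate from the definitions. The complement of the mutated summand $R_k^\ast=\comp_{R/R_k}R_k=R_k^{(1)}$ in $\mut_k R$ is literally $R/R_k$, the complement of $R_k$ in $R$. Hence the exchange triangles for $R_k^\ast$ with respect to $R/R_k$ are exactly the exchange triangles $R_k^{(c)}\to B_k^{(c)}\to R_k^{(c+1)}\to\shift R_k^{(c)}$ reindexed by one: the $c$-th twist of $R_k^\ast$ is $R_k^{(c+1)}$ with middle term $B_k^{(c+1)}$. Reading off multiplicities gives $d_k=d'_k$ and $\widetilde q_{k,j}^{(c)}=q_{k,j}^{(c+1)}$ for all $j$ and $c$.

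For part (ii) the plan is to localise every arrow count by Iyama--Yoshino reduction and then reduce to computations of rank at most two. Three facts drive this. First, by Corollary~\ref{corollary: reduction}, the arrows of $Q$ (resp.\ $\widetilde Q$) between a chosen subset of summands are computed in the reduction of $\cat$ at the remaining summands. Second, by Lemma~\ref{lemma: reduction}, the projected triangle $(\ast)$ identifies the image of $R_k^\ast$ with $\comp$ of the image of $R_k$ in any reduction $\cat_\rc$ with $\rc$ a rigid object not involving $R_k$; thus mutation at $k$ commutes with reduction at summands other than $R_k$. Third, by Lemma~\ref{lemma: CY approx}, the multiplicity $q_{i,j}^{(c)}$ of $R_j$ in the approximation of $R_i^{(c)}$ is already computed in the pairwise reduction $\cat_{R/(R_i\oplus R_j)}$. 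The one new input is the translation of the hypothesis: in $\cat_{R/(R_k\oplus R_j)}$ the vanishing $q_{k,j}^{(0)}=0$ says exactly that $\Hom(\pi R_k,\pi R_j)=0$, so the minimal left $\add(\pi R_j)$-approximation of $\pi R_k$ is zero and $\pi R_k^\ast=\comp\,\pi R_k=\shift\,\pi R_k$; likewise, if both $q_{k,i}^{(0)}$ and $q_{k,j}^{(0)}$ vanish then $\pi R_k^\ast=\shift\,\pi R_k$ in $\cat_{R/(R_i\oplus R_j\oplus R_k)}$.

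Granting this, I would finish the two arrow computations as follows. For the arrows $j\to k$, note that $\mut_k R/(R_j\oplus R_k^\ast)=R/(R_j\oplus R_k)$, so both $q_{j,k}^{(c)}$ and $\widetilde q_{j,k}^{(c)}$ are computed in the single reduction $\dc:=\cat_{R/(R_j\oplus R_k)}$. Writing $U=\pi R_j$ and $V=\pi R_k$, the hypothesis gives $V^\ast=\shift V$, while rigidity of $R$ gives $\ext^1_{\dc}(U,V)=0$, so the first twist of $U$ with respect to $\shift V$ is $\shift U$. Since $\shift$ carries minimal left $\add V$-approximations to minimal left $\add(\shift V)$-approximations, induction yields $\comp^{(c)}_{\shift V}U=\shift\,\comp^{(c-1)}_V U$, and comparing the multiplicities of $\shift V$ with those of $V$ gives $\widetilde q_{j,k}^{(c)}=q_{j,k}^{(c-1)}$. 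For the arrows $i\to j$, I pass to the reduction that also kills $k$: by iterating Iyama--Yoshino reduction, $\cat_{R/(R_i\oplus R_j)}$ and $\cat_{\mut_k R/(R_i\oplus R_j)}$ are the reductions of $\cat_{R/(R_i\oplus R_j\oplus R_k)}$ at $\pi R_k$ and at $\pi R_k^\ast=\shift\,\pi R_k$, and so are identified by the triangle equivalence induced by $\shift$. Tracking the distinguished objects through this equivalence, both $\pi R_i$ and $\pi R_j$ acquire a shift which cancels because the twist commutes with $\shift$, giving $\widetilde q_{i,j}^{(c)}=q_{i,j}^{(c)}$.

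Finally, $d'_i=d_i$ and $d'_j=d_j$ follow from the same shift equivalences, since a triangle equivalence commuting with the shift functor preserves the period of any shift-orbit. I expect the main obstacle to be the bookkeeping of shift functors through these iterated reductions: establishing the equivalence $\cat_X\simeq\cat_{\shift X}$ compatibly with all the structure and matching the distinguished objects so that the colour shift comes out as $0$ on the $i$--$j$ arrows but $-1$ on the $j$--$k$ arrows. This is encapsulated in the rank-two identity $\comp^{(c)}_{\shift V}U=\shift\,\comp^{(c-1)}_V U$, whose verification needs careful control of the minimal approximations and their minimality under the shift, together with rigidity to annihilate the relevant $\ext^1$-terms; the remainder is then formal.
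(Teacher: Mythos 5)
Your part (i) and your computation of the $j\to k$ arrows follow the paper's own route: the paper likewise reindexes the exchange triangles for $R_k^{(1)}$, and for $j\to k$ it reduces to $R_j\oplus R_k$ and observes that the exchange sequence for $R_j$ with respect to $\shift R_k$ is the suspension of the one with respect to $R_k$, the colour shift by $-1$ coming from $R_j\simeq\shift R_j^{(-1)}$ (which needs $\cat(R_k,R_j)=0$, i.e.\ the hypothesis, for the backward direction and the periodicity). Where you genuinely diverge is the $i\to j$ computation. The paper reduces to $R_i\oplus R_j\oplus R_k$ and runs an explicit induction, constructing for every $c$ a comparison triangle $X_c\to R_i^{(c)}\to R_i^{(c)\ast}\to\shift X_c$ with $X_c\in\add R_k$ (octahedral axiom for the inductive step, an opposite-category duality for negative $c$), and reads off that the $R_j$-multiplicities agree; this yields the finer statement that the old and new twists of $R_i$ differ by an object of $\add R_k$ inside $\cat$ itself. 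Your route instead compares the two reductions $\cat_{R_k}$ and $\cat_{\shift R_k}$ via the equivalence induced by $\shift$, which is shorter and more structural, but the step you describe as ``the shift cancels'' is exactly where the content sits and must be made precise: the equivalence carries $(\pi R_i,\pi R_j)$ to $(\shift\pi R_i,\shift\pi R_j)$, and a generic auto-equivalence of the ambient category does \emph{not} descend to the reduction, so the cancellation only works because $\shift\pi R_i=\shift_{\text{red}}\pi R_i$ and $\shift\pi R_j=\shift_{\text{red}}\pi R_j$ in $\cat_{\shift R_k}$ -- which holds since the minimal left $\add\shift R_k$-approximations of $R_i$ and $R_j$ vanish by rigidity of $R$, while the hypothesis $q^{(0)}_{k,i}=q^{(0)}_{k,j}=0$ is needed both to get $R_k^{(1)}\simeq\shift R_k$ and to place $R_i,R_j$ in ${}^\perp(\shift^2R_k)$ at all -- and because $\shift_{\text{red}}$ is an auto-equivalence of the reduction, hence preserves minimal approximations, exchange triangles and multiplicities. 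With that point (and the compatibility of iterated Iyama--Yoshino reduction) spelled out, your argument is a valid and arguably cleaner alternative to the paper's induction, at the cost of losing the explicit comparison triangles the paper obtains along the way.
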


We note that, for the cases covered by the theorem, the new coloured
quiver depends only on the old coloured quiver and not on the particular
choice of rigid object or category $\cat$.

\subsection{Proof of Theorem~\ref{Theorem: mutation}}

We break the proof of Theorem~\ref{Theorem: mutation}
down into smaller steps, which we present as individual
lemmas.

Let $R=R_1\oplus \cdots \oplus R_m\in\cat$ be rigid and let $Q$ be the
associated coloured quiver.

\begin{lemma}\label{lemma: from k}
 We have $\tilde{d}_k=d_k$ and, for any $c\in\zb/d_k$ and
 any $j\in Q_0$,
 $$
 \widetilde{q}_{(c)}(k,j) = q_{(c+1)}(k,j).
 $$
\end{lemma}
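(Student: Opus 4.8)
The plan is to exploit the fact that mutation at $k$ replaces only the summand $R_k$, by its \twist\ $R_k^{(1)}=\comp_{\overline R}R_k$, while leaving the complementary object $\overline R:=R/R_k$ completely unchanged. Write $R'=\mut_k R=\overline R\oplus R_k^{(1)}$. By the definition of mutation we have $R'/R_k^{(1)}=\overline R=R/R_k$, so the exchange triangles governing the arrows out of vertex $k$ in $\widetilde Q$ are exactly the exchange triangles for $R_k^{(1)}$ taken \emph{with respect to the same object} $\overline R$ as those for $R_k$ in $Q$.

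With this in hand, the computation is just an index shift. Denoting by $(R_k^{(1)})^{(c)}=\comp^{(c)}_{\overline R}R_k^{(1)}$ the terms of the new sequence and using the identity $\comp^{(c)}\comp=\comp^{(c+1)}$ recorded before the definition of mutation in Section~\ref{subsection: coloured quivers for rigid objects}, I obtain
\[
(R_k^{(1)})^{(c)}=\comp^{(c)}_{\overline R}\comp_{\overline R}R_k=\comp^{(c+1)}_{\overline R}R_k=R_k^{(c+1)} .
\]
Hence the $c$-th exchange triangle for the new summand $R_k^{(1)}$ is obtained by applying the same construction (cone of a minimal left $\add\overline R$-approximation) to $R_k^{(c+1)}$. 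Since a minimal left approximation, and therefore its cone, is unique up to isomorphism, the middle term of this triangle is isomorphic to $B_k^{(c+1)}$; that is, $\widetilde B_k^{(c)}\cong B_k^{(c+1)}$ as objects of $\add\overline R$.

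Comparing the multiplicity of $R_j$ on either side then gives $\widetilde q^{(c)}_{k,j}=q^{(c+1)}_{k,j}$ for every $j$, the case $j=k$ reducing to the trivial identity $0=0$ because neither $R_k$ nor $R_k^{(1)}$ is a summand of any $B_k^{(\bullet)}\in\add\overline R$. For the periodicity, $d_k'$ is by definition the period of the sequence $\big((R_k^{(1)})^{(c)}\big)_c=(R_k^{(c+1)})_c$, which is merely the index-shift of $(R_k^{(c)})_c$; shifting the index leaves the period unchanged, so $d_k'=d_k$, and the two colour-counts above are then well defined and equal as functions of $c\in\zb/d_k$. The only genuinely delicate point is the uniqueness argument ensuring $\widetilde B_k^{(c)}\cong B_k^{(c+1)}$ rather than a mere agreement of the outer terms of the triangles; once the minimality of the approximations is invoked, everything else is bookkeeping with the shift $c\mapsto c+1$.
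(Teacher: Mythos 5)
Your proposal is correct and takes essentially the same route as the paper, which simply observes that the exchange triangles for $R_k^{(1)}$ with respect to $\overline{R}=R/R_k$ are those for $R_k$ shifted by one index, so that $(R_k^{(1)})^{(c)}=R_k^{(c+1)}$. Your additional remarks on the uniqueness of minimal approximations and the invariance of the period under index shift are just the details the paper leaves implicit.
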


\begin{proof}
 The exchange triangles for $R_k^{(1)}$
 can be deduced from those for $R_k$, so that
 we have
 $(R_k^{(1)})^{(c)} = R_k^{(c+1)}$.
\end{proof}

\begin{lemma}\label{lemma: to k easy case}
 Let $j\in Q_0$ be such that $q_{(0)}(k,j) = 0$.
 Then $\tilde{d}_j = d_j$ and for any $c\in\zb/d_j$ we have:
 $$ \widetilde{q}_{(c)}(j,k) = q_{(c-1)}(j,k).$$
\end{lemma}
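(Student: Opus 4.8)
The plan is to reduce everything to a two-summand situation inside the Iyama--Yoshino reduction $\cat_j$ of $\cat$ with respect to $R/(R_j\oplus R_k)$, exactly as in Lemma~\ref{lemma: CY approx}; write $\Sigma_j$ for its shift and $\pi$ for the reduction functor. By Corollary~\ref{corollary: reduction} the arrows between the vertices $j$ and $k$ in $Q$ (and likewise in $\widetilde Q$, applied to $\mut_k R$ reduced by $R/(R_j\oplus R_k)$) form the full subquiver of the coloured quiver of $\pi(R_j)\oplus\pi(R_k)$ (resp. of $\pi(R_j)\oplus\pi(R_k^{(1)})$) in $\cat_j$. Setting $U=\pi(R_j)$, $V=\pi(R_k)$ and $V'=\pi(R_k^{(1)})$, all three are indecomposable, $U\oplus V$ and $U\oplus V'$ are rigid, and $\ext^1_{\cat_j}(U,V)=\ext^1_{\cat_j}(V,U)=0$ by~\cite[Lemma 4.8]{IY}. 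Composing Iyama--Yoshino reductions identifies $\cat_{R/R_j}$ with the reduction of $\cat_j$ at $V$, and $\cat_{(\mut_k R)/R_j}$ with its reduction at $V'$; since the reduction functor reflects isomorphisms between indecomposables outside the $\add$ of the reducing object, $d_j$ is the period of the $V$-twist sequence of $U$ in $\cat_j$ and $d'_j$ is the period of its $V'$-twist sequence.

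First I would translate the hypothesis. The minimal left $\add R/R_k$-approximation $B_k^{(0)}$ of $R_k$ has no summand $R_k$ and, since $q_{k,j}^{(0)}=0$, no summand $R_j$, so $B_k^{(0)}\in\add R/(R_j\oplus R_k)$ and hence $\pi(B_k^{(0)})=0$. Applying $\pi$ to the exchange triangle $R_k\gfl B_k^{(0)}\gfl R_k^{(1)}\gfl\shift R_k$ then yields $V'\simeq\Sigma_j V$. Read as a statement about approximations, the same vanishing gives $\cat_j(V,U)=0$, because a minimal left $\add U$-approximation of $V$ is zero precisely when $q_{k,j}^{(0)}=0$.

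The key step is the observation that the split triangle
$$
U\gfl 0\gfl \Sigma_j U \stackrel{\operatorname{id}}{\gfl}\Sigma_j U
$$
is an exchange triangle for $U$ with respect to $\add V'$. Indeed $U\to 0$ is a (trivially minimal) left $\add V'$-approximation because $\cat_j(U,V')=\cat_j(U,\Sigma_j V)\simeq D\cat_j(V,\Sigma_j U)=0$ by $2$-Calabi--Yau duality and rigidity, while $0\to\Sigma_j U$ is a minimal right $\add V'$-approximation because $\cat_j(V',\Sigma_j U)=\cat_j(\Sigma_j V,\Sigma_j U)\simeq\cat_j(V,U)=0$ by the hypothesis. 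Thus $\comp_{V'}U\simeq\Sigma_j U$ with vanishing approximation term, so $\widetilde q_{j,k}^{(0)}=0$; this matches $q_{j,k}^{(-1)}=0$, since the minimal right $\add V$-approximation of $U$ also vanishes by $\cat_j(V,U)=0$. Applying the triangle equivalence $\Sigma_j$, which carries $\add V$ onto $\add V'$ and preserves minimality of approximations, to the $V$-exchange triangles of $U$ produces the $V'$-exchange triangles of $\Sigma_j U=\comp_{V'}U$; this gives $\comp_{V'}^{(c+1)}U\simeq\Sigma_j\,\comp_V^{(c)}U=\Sigma_j U^{(c)}$ and, comparing multiplicities of $V'=\Sigma_j V$ with those of $V$, the relation $\widetilde q_{j,k}^{(c)}=q_{j,k}^{(c-1)}$ for all $c\in\zb/d_j$.

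Finally, $d'_j=d_j$ follows from this description: the $V'$-twist sequence of $U$ is $U,\ \Sigma_j U,\ \Sigma_j U^{(1)},\ldots$, namely the $V$-twist sequence shifted by $\Sigma_j$ and reindexed by one, and the exchange triangle with zero $(-1)$-term gives $U\simeq\Sigma_j U^{(-1)}$; hence $\comp_{V'}^{(d)}U\simeq U$ if and only if $U^{(d)}\simeq U$, so the two periods agree. The step I expect to be the main obstacle is the recognition of the split triangle as a genuine exchange triangle, for this is exactly where both orthogonality inputs are needed, one ($\cat_j(U,\Sigma_j V)=0$) from rigidity and the other ($\cat_j(V,U)=0$) being precisely the content of $q_{k,j}^{(0)}=0$; it is also the point where one must take care to transport periods, and not merely arrow counts, between $\cat$ and $\cat_j$.
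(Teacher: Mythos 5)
Your proof is correct and follows essentially the same route as the paper: reduce to the two summands $R_j$ and $R_k$ via Corollary~\ref{corollary: reduction}, deduce from $q^{(0)}_{k,j}=0$ that $R_k^{(1)}\simeq\shift R_k$ and that $\cat(R_k,R_j)=0$ (whence $s_{-1}=0$ and $R_j\simeq\shift R_j^{(-1)}$), and then obtain the new exchange triangles as the shifts of the old ones, reindexed by one. Your explicit verification that the split triangle $U\to 0\to\Sigma_j U$ is a genuine exchange triangle with respect to $\add V'$ is exactly the point the paper leaves implicit, and your bookkeeping of the periods matches the paper's conclusion.
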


\begin{proof}
 By Corollary~\ref{corollary: reduction},
 we may assume that $R=R_j \oplus R_k$.
 Since $q_{(0)}(k,j) = 0$, the first exchange
 triangle for $R_k$ with respect to $R_j$ is:
 $$
 R_k \gfl 0 \gfl R_k^{(1)} \stackrel{=}{\gfl} \shift R_k.
 $$
 Let
 $$
 \ldots,\;
 R_j^{(-1)} \gfl R_k^{s_{-1}} \gfl R_j \gfl \shift R_j^{(-1)} \;,\;
 R_j \gfl R_k^{s_0} \gfl R_j^{(1)} \gfl \shift R_j \;,\ldots
 $$
be the exchange triangles for $R_j$ with respect to $R_k$.
Since $q_{(0)}(k,j) = 0$, we have $s_{-1} =0$
and $R_j$ is isomorphic to $\shift R_j^{(-1)}$.
The exchange triangles for $R_j$ with respect to
$R_k^{(1)} = \shift R_k$ are thus obtained from those
with respect to $R_k$ by applying the shift functor:
\begin{itemize}
 \item[] \hspace{2cm} $\vdots$
 \item[] $\shift R_j^{(-3)} \gfl \shift R_k^{s_{-3}} \gfl \shift R_j^{(-2)} \gfl$
 \item[] $\shift R_j^{(-2)} \gfl \shift R_k^{s_{-2}} \gfl
 \phantom{\shift}R_j^{\phantom{(-2)}} \gfl$
 \item[] $\phantom{\shift}R_j^{\phantom{(2)}} \gfl
 \phantom{\shift} 0^{\phantom{s_0}}\hspace{2pt} \gfl \shift R_j^{\phantom{(2)}} \gfl$
 \item[] $\shift R_j^{\phantom{(2)}} \gfl \shift R_k^{s_0} \gfl \shift R_j^{(1)} \gfl$
 \item[] $\shift R_j^{(1)} \gfl \shift R_k^{s_1} \gfl \shift R_j^{(2)} \gfl$
 \item[] \hspace{2cm} $\vdots$
\end{itemize}
\end{proof}

\begin{lemma}\label{lemma: 3 vertices easy case}
 Let $i,j\in Q_0$ be such that $q_{(0)}(k,j) = 0 = q_{(0)}(k,i)$.
 Then, for any $c\in\zb/d_i$, we have:
 $$\widetilde{q}_{(c)}(i,j) = q_{(c)}(i,j).$$
\end{lemma}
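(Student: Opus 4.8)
The plan is to reduce to the case of three summands and then transport the computation of the $i\to j$ arrows into two different Iyama--Yoshino reductions that are interchanged by the shift functor of $\cat$.

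First I would use Corollary~\ref{corollary: reduction} to assume $R=R_i\oplus R_j\oplus R_k$. Indeed, the complementary summand of $R_i\oplus R_j\oplus R_k$ is common to both $R$ and $\mu_k R$, so reducing $\cat$ with respect to it turns the relevant parts of $Q$ and $\widetilde{Q}$ into the full subquivers on $\{i,j,k\}$ of the coloured quivers of the three-summand objects $\pi(R)$ and $\pi(\mu_k R)$; by Lemma~\ref{lemma: reduction} the \twist\ commutes with this reduction, so $\pi(\mu_k R)=\mu_k\pi(R)$ and it suffices to prove the statement after reduction. The hypotheses $q^{(0)}_{k,i}=q^{(0)}_{k,j}=0$ are preserved, and now force the minimal left $\add(R_i\oplus R_j)$-approximation $B_k^{(0)}$ of $R_k$ to vanish, exactly as in the proof of Lemma~\ref{lemma: to k easy case}; hence $R_k^{(1)}\simeq\shift R_k$ and $\mu_k R=R_i\oplus R_j\oplus\shift R_k$.

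Next I would rewrite both sides as coloured quivers of two-summand objects inside reductions. Applying Corollary~\ref{corollary: reduction} to $R$, reducing away $R_k$, shows that $q^{(c)}_{i,j}$ is the number of $c$-coloured arrows from $\pi_{R_k}R_i$ to $\pi_{R_k}R_j$ in $\cat_{R_k}$; applying it to $\mu_k R$, reducing away $\shift R_k$, shows that $\widetilde{q}^{(c)}_{i,j}$ is the number of $c$-coloured arrows from $\pi_{\shift R_k}R_i$ to $\pi_{\shift R_k}R_j$ in $\cat_{\shift R_k}$. Rigidity of $R$ gives $\cat(R_i,\shift R_k)=0=\cat(R_j,\shift R_k)$, so the minimal left $\add(\shift R_k)$-approximations of $R_i$ and $R_j$ are zero and the corresponding \twist\ triangles are trivial, giving $\comp_{\shift R_k}R_i\simeq\shift R_i$ and $\comp_{\shift R_k}R_j\simeq\shift R_j$. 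By the description of the shift of an Iyama--Yoshino reduction in~\cite[4.1]{IY} (namely that it is the \twist), this says $\pi_{\shift R_k}(\shift R_i)\simeq\shift_{\shift R_k}\pi_{\shift R_k}R_i$, and likewise for $R_j$.

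Finally I would compare the two reductions. The shift $\shift$ of $\cat$ is a triangle autoequivalence with $\shift\big({}^{\perp}(\shift R_k)\big)={}^{\perp}(\shift^2 R_k)$ and $\shift[R_k]=[\shift R_k]$, so it induces a triangle equivalence $\cat_{R_k}\xrightarrow{\sim}\cat_{\shift R_k}$ with $\shift\circ\pi_{R_k}=\pi_{\shift R_k}\circ\shift$; being a triangle equivalence it carries exchange triangles to exchange triangles and hence preserves coloured quivers. Thus the number of $c$-coloured arrows $\pi_{R_k}R_i\to\pi_{R_k}R_j$ in $\cat_{R_k}$ equals the number of $c$-coloured arrows $\pi_{\shift R_k}(\shift R_i)\to\pi_{\shift R_k}(\shift R_j)$ in $\cat_{\shift R_k}$. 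Since $\shift_{\shift R_k}$ is an autoequivalence of $\cat_{\shift R_k}$, applying it to both arguments leaves the coloured quiver unchanged, so by the previous paragraph this last number equals the number of $c$-coloured arrows $\pi_{\shift R_k}R_i\to\pi_{\shift R_k}R_j$, which is $\widetilde{q}^{(c)}_{i,j}$; chaining the equalities gives $q^{(c)}_{i,j}=\widetilde{q}^{(c)}_{i,j}$. The main obstacle is this last step: one must check carefully that the ambient shift really descends to a triangle equivalence of the two reductions compatibly with the quotient functors, and that the reduction-shift of $\cat_{\shift R_k}$ agrees with the \twist\ on $\pi_{\shift R_k}R_i$ and $\pi_{\shift R_k}R_j$ — and it is exactly here that the hypothesis $q^{(0)}_{k,i}=q^{(0)}_{k,j}=0$ enters, through $R_k^{(1)}\simeq\shift R_k$ and the vanishing $\cat(R_i,\shift R_k)=\cat(R_j,\shift R_k)=0$.
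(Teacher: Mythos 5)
Your argument is correct, but it takes a genuinely different route from the paper's. The first step is common to both: reduce to $R=R_i\oplus R_j\oplus R_k$ via Corollary~\ref{corollary: reduction} (and Lemma~\ref{lemma: reduction}), and observe that the hypotheses force $B_k^{(0)}=0$, hence $R_k^{(1)}\simeq\shift R_k$ and $\cat(R_k,R_i)=0=\cat(R_k,R_j)$. From there the paper proceeds by an explicit double induction: for each $c\geq 0$ it constructs a minimal left $\add R_j$-approximation of $R_i^{(c)\ast}$ in the reduction at $\shift R_k$ together with a comparison triangle $X_{c+1}\to R_i^{(c+1)}\to R_i^{(c+1)\ast}\to\shift X_{c+1}$ with $X_{c+1}\in\add R_k$, and then deals with negative $c$ by a separate duality argument in $\cat^{\text{op}}$. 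You avoid the induction entirely by a symmetry argument: the ambient shift restricts to an equivalence of pairs $\bigl({}^{\perp}(\shift R_k),\add R_k\bigr)\to\bigl({}^{\perp}(\shift^2 R_k),\add \shift R_k\bigr)$, hence induces a triangle equivalence $\cat_{R_k}\to\cat_{\shift R_k}$ carrying $\pi_{R_k}(R_i\oplus R_j)$ to $\pi_{\shift R_k}(\shift R_i\oplus\shift R_j)\simeq\shift_{\shift R_k}\,\pi_{\shift R_k}(R_i\oplus R_j)$, the last isomorphism holding because $\cat(R_i\oplus R_j,\shift R_k)=0$ makes the twist with respect to $\shift R_k$ coincide with $\shift$; since triangle equivalences (including $\shift_{\shift R_k}$ itself) preserve minimal approximations and hence exchange triangles and coloured quivers, the two full subquivers on $\{i,j\}$ agree, and Corollary~\ref{corollary: reduction} applied to $R$ and to $\mu_k R$ identifies them with $q^{(c)}_{i,j}$ and $\widetilde{q}^{(c)}_{i,j}$. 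Your route is shorter, handles all $c\in\zb/d_i$ uniformly with no separate argument for negative colours, and isolates exactly where the hypotheses enter; the paper's construction is longer but produces extra information, namely the explicit triangles measuring the difference between $R_i^{(c)}$ and $R_i^{(c)\ast}$ by objects of $\add R_k$. The two checks you flag as the main obstacle are indeed the only ones needing care, and both are routine: the triangulated structure of an Iyama--Yoshino reduction and the exchange triangles are defined canonically from minimal approximation data, which any (triangle) equivalence of the underlying pairs transports.
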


\begin{proof}
 By Corollary~\ref{corollary: reduction},
 we may assume that $R=R_i\oplus R_j \oplus R_k$.
 Let
 $$
 \ldots,\;
 R_i^{(-1)} \fl R_j^{t_{-1}} \fl R_i \fl \shift R_i^{(-1)}, \;
 R_i \fl R_k^{s_0}\oplus R_j^{t_0} \fl R_i^{(1)} \fl \shift R_i
 \;,\ldots
 $$
 be the exchange triangles in $\cat$ for $R_i$ with respect to $R_j\oplus R_k$.
 We denote by $R_i^{(c)\ast}$ the \twists\ of $R_i$
 with respect to $\mu_kR/R_i$.
 Our assumptions have the following consequences:
 \begin{itemize}
  \item[(i)] $R_k^{(1)} = \shift R_k$ and
  \item[(ii)] the spaces $\cat(R_k,R_j)$ and $\cat(R_k,R_i)$ vanish.
 \end{itemize}
 Let $\cs$ be the Iyama--Yoshino reduction of $\cat$
 with respect to $\shift R_k=R_k^{(1)}$. The image in $\cs$
 of a morphism $f\in\cat$ is denoted $\underline{f}$.

 By induction on $c\geq 0$, we are going to construct:
 \begin{itemize}
  \item[(a)] A minimal left $\add R_j$-approximation
  $R_i^{(c)\ast} \gfl R_j^{t_c}$ in $\cs$;
  \item[(b)] a triangle $X_{c+1} \fl R_i^{(c+1)} \fl R_i^{(c+1)\ast} \fl \shift X_{c+1}$
  in $\cat$, with $X_{c+1}$ in $\add R_k$;
  \item[(a')] a minimal right $\add R_j$-approximation
  $R_j^{t_{-c-1}} \gfl R_i^{(-c)\ast}$ in $\cs$ and
  \item[(b')] a triangle $X_{-c-1} \fl R_i^{(-c-1)} \fl R_i^{(-c-1)\ast} \fl \shift X_{-c-1}$
  in $\cat$, with $X_{-c-1}$ in $\add R_k$.
 \end{itemize}
 The result then follows from (a) and (a') by Corollary~\ref{corollary: reduction}.

 Let us first prove that (a) and (b) hold for $c=0$.
 Note that, by (ii), both $R_i$ and $R_j$ belong to
 $(\susm R^{(1)}_k)^\perp$, so that (a) makes sense.
 Let us denote by $\left[_f^{f'}\right]$ the map
 $R_i \fl R_k^{s_0}\oplus R_j^{t_0}$.
 Since $\cat(R_k,R_j)=0$, the map $R_i \stackrel{f}{\gfl} R_j^{t_0}$
 is a left $\add R_j$-approximation in $\cat$, thus so is
 $\underline{f}$ in $\cs$. Let $g\in\End_{\cat}(R_j^{t_0})$ be such
 that $\underline{g}\,\underline{f} = \underline{f}$. Then
 $\left[_{0 \; g}^{1 \; 0} \right]\left[_f^{f'}\right] - \left[_f^{f'}\right]$
 factors through $\add \shift R_k$. Since $R_i$ belongs to
 $^\perp(\shift R_k)$, we have in fact
 $\left[_{0 \; g}^{1 \; 0} \right]\left[_f^{f'}\right] = \left[_f^{f'}\right]$.
By minimality, $g$ is an isomorphism. Thus $\underline{f}$
is left-minimal.
By Lemma~\ref{lemma: CY approx} and Lemma~\ref{lemma: to k easy case},
the first exchange triangle with respect to $\mu_k R$ for $R_i$ is
$$
R_i \gfl R_j^{t_0} \gfl R_i^{(1)\ast} \gfl \shift R_i.
$$
The triangle (b) is easily constructed by applying the
octahedral axiom to the composition
$R_i \gfl R_k^{s_0}\oplus R_j^{t_0} \stackrel{\text{proj}}{\gfl} R_j^{t_0}$ as follows:
$$
\xymatrix{
                          &
R_k^{s_0} \dreg \bas      &
R_k^{s_0} \bas            &
                          \\
R_i \dr \baseg            &
R_k^{s_0}\oplus
R_j^{t_0} \dr \bas        &
R_i^{(1)} \dr \bas        &
\shift R_i \baseg         \\
R_i \dr                   &
R_j^{t_0} \dr \bas_0      &
R_i^{(1)\ast} \dr \bas    &
\shift R_i                \\
                          &
\shift R_k^{s_0} \dreg    &
\shift R_k^{s_0}.         &
}
$$
Assume that (a) and (b) hold for some $c$,
and let us first prove that (a) holds for $c+1$.
Note that, by construction, $R_i^{(c+1)\ast}$ belongs
to $R_k^{\perp}=\shift^{-1}(R_k^{(1)})^{\perp}$ and so
does $R_j$, by (ii), so (a) makes sense.
Write $X$ for $X_{c+1}$.
Since $X$ belongs to $\add R_k$,
the space $\cat(X,R_j)$ vanishes and the
morphism $R_i^{(c+1)} \fl R_k^{s_{c+1}}\oplus R_j^{t_{c+1}}$
induces a morphism of triangles:
$$
\xymatrix{
X \dr \basp              &
R_i^{(c+1)} \dr   \bas   &
R_i^{(c+1)\ast}
\dr \basp^m              &
\shift X \basp           \\
R_k^{s_{c+1}} \dr        &
R_k^{s_{c+1}} \oplus
R_j^{t_{c+1}} \dr        &
R_j^{t_{c+1}} \dr^0      &
\shift R_k^{s_{c+1}}.
}
$$
We claim that $\underline{m}$
is a minimal left $\add R_j$-approximation
in $\cs$.
Let $f$ belong to $\cat(R_i^{(c+1)\ast},R_j)$.
The following diagram illustrates the proof:
$$
\xymatrix{
\susm R_i^{(c+2)} \bas_v          &
                                  &
                                  \\
R_i^{(c+1)} \dr^q \bas_u          &
R_i^{(c+1)\ast} \dr^p
\bas_m \bbdr^f                    &
\shift X                          \\
R_k^{s_{c+1}} \oplus
R_j^{t_{c+1}} \dr^(0.63)\pi_2 \bas
\ar@/_1.5pc/@{-->}[drr]_a         &
R_j^{t_{c+1}}
\ar@/_1pc/@{-->}[dr]^b          &
                                  \\
R_i^{(c+2)}                       &
                                  &
R_j
}
$$
where $\pi_2$ denotes the second projection.
Since the space $\cat(R_i^{(c+2)},\shift R_j)$
vanishes, we have $fqv = 0$ and there exists
a morphism $a$ such that $fq = au$. By (ii),
the morphism $a$ factors through $\pi_2$. Let $b$
be such that $a = b\pi_2$. We then have
$fq = b\pi_2 u = bmq$, and the morphism
$f - bm$ factors through $p$.
Since the object $X$ belongs to $\add R_k$, this
implies that $f-bm$ lies in the ideal $(\shift R_k)$.
That is $\underline{m}$ is a left $\add R_j$-approximation
in $\cs$.
Let $g\in\End_\cat(R_j^{t_{c+1}})$ be such that
$\underline{g}\,\underline{m} = \underline{m}$,
that is $gm-m$ belongs to the ideal $(\shift R_k)$.
This implies that the composition $(m-gm)q$
vanishes since $\cat(R_i^{(c+1)},\shift R_k) = 0$.
Let $h\in\cat(\shift X,R_j^{t_{c+1}})$ be such that
$gm = m+hp$. We have:
$gmq = mq + hpq =mq$. Since $mq = \pi_2 u$
is left minimal, the morphism $g$ is an isomorphism
in $\cat$, thus so is $\underline{g}$ in $\cs$.
Hence (a) holds for $c+1$.

Let us now prove that (b) holds for $c+1$.
By Lemma~\ref{lemma: CY approx}, Lemma~\ref{lemma: to k easy case}
and (a) for $c+1$, we have a minimal left
$\add R_j\oplus \shift R_k$
approximation of $R_i^{(c+1)\ast}$
in $\cat$ of the form
$\left[^m_{\,r} \right]$ for some $r:R_i^{(c+1)*}\gfl \shift R_k^{s_c}$,
which we complete to an exchange triangle
$$
R_i^{(c+1)\ast} \stackrel{\left[^m_{\,r} \right]}{\gfl}
R_j^{t_{c+1}} \oplus \shift R_k^{s_c} \gfl
R_i^{(c+2)\ast} \gfl \shift R_i^{(c+1)\ast}.
$$
Complete the commutative square
$$
\xymatrix{
R_i^{(c+1)} \dr^{u\hspace{.5cm}} \bas_q   &
R_j^{t_{c+1}} \oplus R_k^{s_{c+1}}
\bas^{\left[^{1 \; 0}_{0 \; 0} \right]}   \\
R_i^{(c+1)\ast}
\dr^{\left[^m_{\,r} \right]\hspace{.5cm}}     &
R_j^{t_{c+1}} \oplus \shift R_k^{s_c}
}
$$
to a commutative diagram
$$
\xymatrix{
R_i^{(c+1)} \dr^{u\hspace{.5cm}} \bas_q    &
R_j^{t_{c+1}} \oplus R_k^{s_{c+1}} \dr
\bas^{\left[^{1 \; 0}_{0 \; 0} \right]}    &
R_i^{(c+2)} \dr \bas                       &
\shift R_i^{(c+1)}                         \\
R_i^{(c+1)\ast} \bas
\dr^{\left[^m_{\,r} \right]\hspace{.5cm}}  &
R_j^{t_{c+1}} \oplus \shift R_k^{s_c}
\bas \dr                                   &
R_i^{(c+2)\ast} \dr \bas                   &
\shift R_i^{(c+1)\ast}                     \\
\shift X \dr \bas                          &
\shift R_k^{s_{c+1}} \oplus
\shift R_k^{s_c} \dr \bas                  &
\shift Y \dr^\eta \bas                     &
\shift^2 X                                 \\
\shift R_i^{(c+1)}                         &
\shift R_j^{t_{c+1}} \oplus
\shift R_k^{s_{c+1}}                       &
\shift R_i^{(c+2)}                         &
}
$$
whose rows and columns are triangles.
By construction, $R_i^{(c+2)\ast}$
belongs to $^\perp(\shift^2 R_k)$. Moreover,
$\cat(\shift R_i^{(c+2)},\shift^2 R_k)
\simeq \cat(R_i^{(c+2)},\shift R_k) = 0$.
Thus $\shift Y$ also belongs to the extension-closed
subcategory $^\perp(\shift^2 R_k)$, and the morphism
$\eta$ vanishes, since $X\in \add R_k$.
As a consequence, the triangle in the third row splits and $Y$
belongs to $\add R_k$. Define $X_{c+2}$ to be $Y$.
Then we see that (b) has been shown.

The statements (a') and (b') can be deduced from
(a) and (b) by duality, as we now explain.
Consider, in the category $\cat^\text{op}$,
the object $R_i \oplus R_j \oplus \shift R_k$.

It is a rigid object:
\begin{eqnarray*}
 \cat^\text{op}(R_i,\shift^\text{op} \shift R_k)
 & = &
 \cat^\text{op}(R_i,R_k) \\
 & = &
 \cat(R_k,R_i) \\
  & = & 0,
\end{eqnarray*}
and similarly, $\cat^\text{op}(R_j,\shift^\text{op} \shift R_k)=0$.
Moreover, it satisfies the assumptions we made to prove (a) and (b):
\begin{eqnarray*}
 \cat^\text{op}(\shift R_k,R_i)
 & = &
 \cat(R_i,\shift R_k) \\
 & = & 0.
\end{eqnarray*}

Note that
$\mu^\text{op}\shift R_k = R_k$,
$R_i^{(c)\text{op}} = R_i^{(-c)\ast}$ and
$R_i^{(c)\ast\text{op}} = R_i^{(-c)}$.
Therefore, there are triangles in $\cat$
$$
Y_{c+1} \longleftarrow R_i^{(-c-1)\ast}
\longleftarrow R_i^{(-c-1)} \longleftarrow
\susm Y_{c+1}
$$
with $Y_{c+1}$ in $\add \shift R_k$.
Let $X_{-c-1} = \susm Y_{c+1}$, to get
the triangles (b').

By (a) applied to $\cat^\text{op}$,
there are minimal right $\add R_j$ approximations
$R_i^{(-c)} \leftarrow R_j^{t_c^\text{op}}$
in $\cat_{R_k} = {}^\perp (\shift R_k) / (R_k)$.
This proves that we have $t_c^\text{op} = t_{-c-1}$.
Written in $\cat$, the exchange triangles
in $\cat^\text{op}$ for $R_i$ with respect to
$R_j\oplus \shift R_k$ are of the form:
$$
R_i^{(-c)\ast} \longleftarrow
R_j^{t_c^\text{op}}\oplus (\shift R_k)^{s_c^\text{op}}
\longleftarrow R_i^{(-c-1)\ast}
\longleftarrow \susm R_i^{(-c)\ast}.
$$
By Lemma~\ref{lemma: CY approx}, we thus have
minimal right $\add R_j$ approximations
$R_i^{(-c)\ast} \leftarrow R_j^{t_{-c-1}}$ in $\underline{\cat}$.
\end{proof}

\bibliographystyle{alpha}
\newcommand{\etalchar}[1]{$^{#1}$}

\end{document}